\newtheorem{theorem}{Theorem}
\newtheorem{conjecture}[theorem]{Conjecture}
\newtheorem{definition}[theorem]{Definition}
\newtheorem{lemma}[theorem]{Lemma}
\newtheorem{remark}[theorem]{Remark}
\newenvironment{proof}[1][Proof]{\noindent\textbf{#1.} }{\ \rule{0.5em}{0.5em}}
\begin{document}

\title{Means and Hermite Interpolation}
\author{Alan Horwitz}
\maketitle

\begin{abstract}
Let $m_{2}<m_{1}$ be two given nonnegative integers with $n=m_{1}+m_{2}+1$.
For suitably differentiable $f$, we let $P,Q\in \pi _{n}$ be the Hermite
polynomial interpolants to $f$ which satisfy $%
P^{(j)}(a)=f^{(j)}(a),j=0,1,...,m_{1}$ and $%
P^{(j)}(b)=f^{(j)}(b),j=0,1,...,m_{2},$ $%
Q^{(j)}(a)=f^{(j)}(a),j=0,1,...,m_{2}$ and $%
Q^{(j)}(b)=f^{(j)}(b),j=0,1,...,m_{1}$. Suppose that $f\in C^{n+2}(I)$ with $%
f^{(n+1)}(x)\neq 0$ for $x\in (a,b)$. If $m_{1}-m_{2}$ is even, then there
is a unique $x_{0},a<x_{0}<b,$ such that $P(x_{0})=Q(x_{0})$. If $%
m_{1}-m_{2} $ is odd, then there is a unique $x_{0},a<x_{0}<b,$ such that $%
f(x_{0})=\tfrac{1}{2}\left( P(x_{0})+Q(x_{0})\right) $. $x_{0}$ defines a
strict, symmetric mean, which we denote by $M_{f,m_{1},m_{2}}(a,b)$. We
prove various properties of these means. In particular, we show that $%
f(x)=x^{m_{1}+m_{2}+2}$ yields the arithmetic mean, $f(x)=x^{-1}$ yields the
harmonic mean, and $f(x)=x^{(m_{1}+m_{2}+1)/2}$ yields the geometric mean.
\end{abstract}

\section{Introduction}

\begin{definition}
A \textit{mean} $m(a,b)$ in two variables is a continuous function on $\Re
_{2}^{+}=\left\{ (a,b):a,b>0\right\} $ with $\min (a,b)\leq m(a,b)\leq \max
(a,b)$. $m$ is called

(1) \textit{Strict} if $m(a,b)=\min (a,b)$ or $m(a,b)=\max (a,b)$ if and
only if $a=b$ for all $(a,b)\in \Re _{2}^{+}$.

(2) \textit{Symmetric} if $m(b,a)=m(a,b)\ $for all$\ (a,b)\in \Re _{2}^{+}$.

(3) \textit{Homogeneous} if $m(ka,kb)=km(a,b)$ for any $k>0\ $and for all $%
(a,b)\in \Re _{2}^{+}$.
\end{definition}

Of course, in some cases a mean can be extended to all real numbers, such as
with the arithmetic mean $m(a,b)=\tfrac{a+b}{2}$. In this paper we define
means in two variables using intersections of Hermite polynomial
interpolants to a given function, $f$. Throughout we assume, unless stated
otherwise, that $m_{2}<m_{1}$ are two given nonnegative integers with $%
n=m_{1}+m_{2}+1$. If $f^{(k)}(a)$ and $f^{(k)}(b)$ each exist for $%
k=0,1,...,m_{1}$, we let $P,Q\in \pi _{n}$ be the Hermite polynomial
interpolants to $f$ which satisfy 
\begin{eqnarray}
P^{(j)}(a) &=&f^{(j)}(a),j=0,1,...,m_{1}\text{ and }%
P^{(j)}(b)=f^{(j)}(b),j=0,1,...,m_{2}  \notag \\
&&  \label{PQ} \\
Q^{(j)}(a) &=&f^{(j)}(a),j=0,1,...,m_{2}\text{ and }%
Q^{(j)}(b)=f^{(j)}(b),j=0,1,...,m_{1}  \notag
\end{eqnarray}

Of course $P$ and $Q$ depend on $m_{1},m_{2},$ and $f$, but we supress that
in our notation. Under suitable conditions on $f$(see Theorem \ref{T1}
below), if $m_{1}-m_{2}$ is even, then there is a unique $x_{0},a<x_{0}<b,$
such that $P(x_{0})=Q(x_{0})$. If $m_{1}-m_{2}$ is odd (see Theorem \ref{T2}
below), then there is a unique $x_{0},a<x_{0}<b,$ such that $f(x_{0})=\tfrac{%
1}{2}\left( P(x_{0})+Q(x_{0})\right) $. In either case, $x_{0}$ defines a
strict, symmetric mean, which we denote by $M_{f,m_{1},m_{2}}(a,b)$.

\qquad The means defined in this paper are similar to a class of means
defined in \cite{h1} and \cite{h2}, which were based on intersections of
Taylor polynomials, each of order $r$. More precisely, for $f\in
C^{r+1}(I),I=(a,b),$ let $P_{c}$ denote the Taylor polynomial to $f$ of
order $r$ at $x=c$, where $r$ is odd. In \cite{h1} it was proved that if $%
f^{(r+1)}(x)\neq 0$ on $[a,b]$, then there is a unique $u,a<u<b,$ such that $%
P_{a}(u)=P_{b}(u)$. This defines a mean $m(a,b)\equiv u$. These means were
extended to the case when $r$ is even in \cite{h2} by defining $m(a,b)$ to
be the unique solution in $(a,b)$ of the equation $f(x)=\dfrac{1}{2}\left(
P(x)+Q(x)\right) $. However, many of the proofs in this paper are more
complex than those in \cite{h1} and \cite{h2} because the means $%
M_{f,m_{1},m_{2}}$ depend on two nonnegative integers, $m_{1}$ and $m_{2}$,
rather than just on the one nonnegative integer, $r$. In \cite{h1} the
author also proved some minimal results for means involving intersections of
Hermite interpolants to a given function, $f$. In particular we proved a
version of Theorems \ref{T1}, \ref{T3}, and \ref{T4} below for the special
case when $m_{1}-m_{2}=2$. In this paper we prove much more along these
lines.

\section{Main Results}

Our first result allows us to define a mean using intersections of Hermite
interpolants when $m_{1}-m_{2}$ is even.

\begin{theorem}
\label{T1} Suppose that $m_{2}<m_{1}$ are two given nonnegative integers
with $m_{1}-m_{2}$ even. Let $n=m_{1}+m_{2}+1$ and let $I=(a,b),0<a<b$ be a
given open interval. Suppose that $f\in C^{n+2}(I)$ with $f^{(n+1)}(x)\neq 0$
for $x\in I$, and let $P$ and $Q$ satisfy the Hermite interpolation
conditions given by (\ref{PQ}). Then there is a unique $x_{0},a<x_{0}<b,$
such that $P(x_{0})=Q(x_{0})$.
\end{theorem}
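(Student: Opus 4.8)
The plan is to study the difference $D(x) = P(x) - Q(x)$, which is a polynomial in $\pi_n$, and to show it has exactly one zero in $(a,b)$. The key observation is the Hermite interpolation error formula: for suitably differentiable $f$ and a polynomial $R$ interpolating $f$ with prescribed derivative data at $a$ and $b$, one has $f(x) - R(x) = \frac{f^{(N+1)}(\xi_x)}{(N+1)!}\,w(x)$, where $w$ is the nodal polynomial of degree $N+1$ with the appropriate multiplicities at $a$ and $b$. For $P$ we get the factor $(x-a)^{m_1+1}(x-b)^{m_2+1}$ and for $Q$ the factor $(x-a)^{m_2+1}(x-b)^{m_1+1}$, each of total degree $n+1$. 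Subtracting the two error formulas gives
\[
D(x) = Q(x) - P(x) = \frac{f^{(n+1)}(\xi)}{(n+1)!}(x-a)^{m_1+1}(x-b)^{m_2+1} - \frac{f^{(n+1)}(\eta)}{(n+1)!}(x-a)^{m_2+1}(x-b)^{m_1+1},
\]
but since the $\xi,\eta$ depend on $x$ this is awkward to use directly for counting zeros; instead I would use it mainly to pin down the sign/behavior of $D$ near the endpoints.

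First I would record that $D$ vanishes to order exactly $m_2+1$ at each of $a$ and $b$: indeed $P$ and $Q$ share the data $f^{(j)}(a)$ for $j \le m_2$ and $f^{(j)}(b)$ for $j \le m_2$, so $(x-a)^{m_2+1}(x-b)^{m_2+1}$ divides $D$, and one checks the order is no higher by comparing the $(m_2+1)$-st derivatives at $a$ (where $P$ still matches $f$ but $Q$ in general does not, and the mismatch is governed by $f^{(n+1)} \ne 0$). Write $D(x) = (x-a)^{m_2+1}(x-b)^{m_2+1} g(x)$ with $g \in \pi_{n - 2m_2 - 2} = \pi_{m_1 - m_2 - 1}$, a polynomial of degree at most $m_1 - m_2 - 1$. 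So it suffices to show $g$ has exactly one zero in $(a,b)$; since $\deg g \le m_1-m_2-1$, an upper bound of one zero will follow once I show $g$ has \emph{at most} one zero, and existence will follow from a sign change.

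For the sign change, I would evaluate the leading behavior of $D$ near $a^+$ and near $b^-$ using the error formula above: near $a$, the dominant term is $-\frac{f^{(n+1)}(\eta)}{(n+1)!}(x-a)^{m_2+1}(x-b)^{m_1+1}$ (the other term vanishes faster, to order $m_1+1 > m_2+1$), and near $b$ the dominant term is $+\frac{f^{(n+1)}(\xi)}{(n+1)!}(x-a)^{m_1+1}(x-b)^{m_2+1}$. Dividing out $(x-a)^{m_2+1}(x-b)^{m_2+1}$, the function $g$ has, near $a$, the sign of $-f^{(n+1)}(a)(a-b)^{m_1-m_2}$ and near $b$ the sign of $f^{(n+1)}(b)(b-a)^{m_1-m_2}$. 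Since $m_1 - m_2$ is even, $(a-b)^{m_1-m_2} = (b-a)^{m_1-m_2} > 0$, and since $f^{(n+1)}$ does not vanish on $I$ it has constant sign there; hence $g(a^+)$ and $g(b^-)$ have \emph{opposite} signs, giving at least one zero of $g$ in $(a,b)$ by the intermediate value theorem. (This is exactly the place the parity hypothesis $m_1-m_2$ even is used; in the odd case the signs would agree, which is why that case needs the different formulation of Theorem~\ref{T2}.)

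The main obstacle is the uniqueness, i.e. showing $g$ has \emph{at most} one zero in $(a,b)$, since a naive degree bound only gives $m_1 - m_2 - 1$, which exceeds one in general. The approach I would take is a Rolle-type / divided-difference argument applied to $D$ directly rather than to $g$. Suppose $D$ had two zeros $x_1 < x_2$ in $(a,b)$ (in addition to its forced zeros of multiplicity $m_2+1$ at $a,b$); then $D$ would have at least $2(m_2+1) + 2 = 2m_2 + 4$ zeros counted with multiplicity in $[a,b]$. One then shows this forces $f - P$ (or a suitable auxiliary function) to have too many sign changes, contradicting $f^{(n+1)} \ne 0$ via repeated application of Rolle's theorem: the key identity is that $f^{(n+1)}$ is, up to a nonzero constant, the $(n+1)$-st divided difference, and an excess zero of $D$ translates into $f^{(n+1)}$ being forced to vanish somewhere in $(a,b)$. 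Concretely, I expect the argument to run: build a function $h = f - P$ or $h = f - \frac12(P+Q)$-type combination, or better, observe that both $f - P$ and $f - Q$ have sign controlled by $f^{(n+1)}$ and the respective nodal polynomials, and an extra intersection of $P$ and $Q$ would place a zero of $f-P$ and of $f-Q$ that is incompatible with those sign patterns. Pinning down this counting cleanly — making sure multiplicities at the endpoints are tallied correctly and that the Rolle cascade consumes exactly the right number of derivatives to reach a contradiction with $f^{(n+1)}(x) \ne 0$ on the open interval — is the delicate part, and I would expect the author to isolate it as a lemma about the number of zeros of $f^{(n+1)}$ implied by zeros of $D$.
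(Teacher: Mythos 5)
Your existence argument (sign change of $g$ at the two endpoints, using the parity of $m_1-m_2$) is essentially sound, but the uniqueness step — which you yourself flag as the delicate part — is a genuine gap, and the route you sketch for it does not work. The degree count on $D=P-Q$ only helps when $m_1-m_2=2$: in general $D$ has degree up to $n=m_1+m_2+1$, so endpoint zeros of multiplicity $m_2+1$ plus two interior zeros ($2m_2+4$ in all) is perfectly compatible with $\deg D$ once $m_1-m_2\geq 4$. More importantly, an interior intersection of $P$ and $Q$ places \emph{no} zero of $f-P$ or $f-Q$, so there is no Rolle cascade to run on those error functions; and the sign-pattern idea fails outright, because when $m_1-m_2$ is even the nodal factors $(x-a)^{m_1+1}(x-b)^{m_2+1}$ and $(x-a)^{m_2+1}(x-b)^{m_1+1}$ have the \emph{same} sign on $(a,b)$, and both divided differences $f[x,a^{m_1+1},b^{m_2+1}]$, $f[x,a^{m_2+1},b^{m_1+1}]$ have the sign of $f^{(n+1)}$; hence $E_P$ and $E_Q$ keep one common sign throughout $(a,b)$, and their being equal at several points produces no incompatibility by sign considerations alone. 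Ruling out multiple intersections needs a quantitative input beyond $f^{(n+1)}\neq 0$ at the level of signs.

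The paper supplies exactly that input by a monotonicity argument. Writing $P(x)=Q(x)$ as $(x-a)^{m_1-m_2}h_1(x)=(x-b)^{m_1-m_2}h_2(x)$ with $h_1(x)=f[x,a^{m_1+1},b^{m_2+1}]$, $h_2(x)=f[x,a^{m_2+1},b^{m_1+1}]$, one differentiates using $\tfrac{d}{dx}f[x,c_1,\dots,c_N]=f[x,x,c_1,\dots,c_N]$ and the recurrence $(x-a)f[x,x,b^{m_2+1},a^{m_1+1}]=f[x,x,b^{m_2+1},a^{m_1}]-f[x,b^{m_2+1},a^{m_1+1}]$, and then invokes the Mean Value Theorem for divided differences (with $f^{(n+1)}>0$, say) to conclude that $(x-a)^{m_1-m_2}h_1(x)$ is positive, strictly increasing on $I$ and vanishes at $a$, while $(x-b)^{m_1-m_2}h_2(x)$ is positive, strictly decreasing on $I$ and vanishes at $b$ (here the evenness of $m_1-m_2$ is used again). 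A strictly increasing function starting at $0$ and a strictly decreasing positive function ending at $0$ cross exactly once, giving existence and uniqueness simultaneously. If you want to salvage your outline, this divided-difference monotonicity is the missing lemma you would need to prove; without it, or some equivalent, the uniqueness claim is unsupported for $m_1-m_2\geq 4$.
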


\begin{proof}
We may assume, without loss of generality, that $f^{(n+1)}(x)>0$ on $I$. Let 
$E_{P}(x)=f(x)-P(x)$ and $E_{Q}(x)=f(x)-Q(x)$ denote the respective error
functions for $P$ and $Q$, and let $f[x_{0},x_{1},...,x_{n}]$ denote the $n$%
th order divided difference of $f$ for distinct nodes $x_{0},x_{1},...,x_{n}$%
. In general, divided differences at distinct points are defined inductively
by $f[x_{0},x_{1},...,x_{j}]=\tfrac{%
f[x_{0},x_{1},...,x_{j-1}]-f[x_{1},...,x_{j}]}{x_{0}-x_{j}}$ with $%
f[x_{0}]=f(x_{0})$. For sufficiently differentiable $f$, one can extend the
definition of divided difference in a continuous fashion when the nodes are
not all distinct (see, for example, \cite{ik}). We let $%
f[x_{0}^{m_{0}},x_{1}^{m_{1}},...,x_{n}^{m_{n}}]$ denote the divided
difference where $x_{k}$ appears $m_{k}$ times. Using one well--known form
of the error in Hermite interpolation, one has 
\begin{eqnarray}
E_{P}(x) &=&(x-a)^{m_{1}+1}(x-b)^{m_{2}+1}f[x,a^{m_{1}+1},b^{m_{2}+1}]\text{
and}  \label{epq} \\
E_{Q}(x) &=&(x-a)^{m_{2}+1}(x-b)^{m_{1}+1}f[x,a^{m_{2}+1},b^{m_{1}+1}] 
\notag
\end{eqnarray}%
Let 
\begin{equation*}
h_{1}(x)=f[x,a^{m_{1}+1},b^{m_{2}+1}],h_{2}(x)=f[x,a^{m_{2}+1},b^{m_{1}+1}].
\end{equation*}%
Now $P(x)=Q(x)\iff E_{P}(x)=E_{Q}(x)\iff $%
\begin{equation}
(x-a)^{m_{1}-m_{2}}h_{1}(x)=(x-b)^{m_{1}-m_{2}}h_{2}(x).  \label{h1h2}
\end{equation}%
By the Mean Value Theorem for divided differences (see \cite{ik}), $%
f[x,a^{m_{1}+1},b^{m_{2}+1}]=\tfrac{f^{(n+1)}(\zeta _{1})}{(n+1)!}$ and $%
f[x,a^{m_{2}+1},b^{m_{1}+1}]=\tfrac{f^{(n+1)}(\zeta _{2})}{(n+1)!}$, where $%
\zeta _{1},\zeta _{2}\in I$ if $x\in I$. Thus $%
f[x,a^{m_{1}+1},b^{m_{2}+1}]>0 $ and $f[x,a^{m_{2}+1},b^{m_{1}+1}]>0$. Now $%
h_{1}^{\prime }(x)=\tfrac{d}{dx}%
f[x,a^{m_{1}+1},b^{m_{2}+1}]=f[x,x,a^{m_{1}+1},b^{m_{2}+1}]$ (see \cite{ik}%
), which implies that $\tfrac{d}{dx}\left[ (x-a)^{m_{1}-m_{2}}h_{1}(x)\right]
=(x-a)^{m_{1}-m_{2}}h_{1}^{\prime }(x)+(m_{1}-m_{2})(x-a)^{m-1}h_{1}(x)=$

$(x-a)^{m_{1}-m_{2}-1}\left[
(x-a)f[x,x,a^{m_{1}+1},b^{m_{2}+1}]+(m_{1}-m_{2})f[x,a^{m_{1}+1},b^{m_{2}+1}]%
\right] $. Now $(x-a)^{m_{1}-m_{2}-1}\geq 0$ for $x\in I.$ Simplifying the
term in brackets using properties of divided differences yields $%
(x-a)f[x,x,b^{m_{2}+1},a^{m_{1}+1}]+(m_{1}-m_{2})f[x,b^{m_{2}+1},a^{m_{1}+1}]=f[x,x,b^{m_{2}+1},a^{m_{1}}]-f[x,b^{m_{2}+1},a^{m_{1}+1}]+ 
$

$%
(m_{1}-m_{2})f[x,b^{m_{2}+1},a^{m_{1}+1}]=f[x,x,b^{m_{2}+1},a^{m_{1}}]+(m_{1}-m_{2}-1)f[x,b^{m_{2}+1},a^{m_{1}+1}]>0 
$ again by the Mean Value Theorem for divided differences. Thus

$\dfrac{d}{dx}\left[ (x-a)^{m_{1}-m_{2}}h_{1}(x)\right] >0\Rightarrow
(x-a)^{m_{1}-m_{2}}h_{1}(x)$ is increasing on $I$. Similarly, $\dfrac{d}{dx}%
\left[ (x-b)^{m_{1}-m_{2}}h_{2}(x)\right] =(x-b)^{m_{1}-m_{2}}h_{2}^{\prime
}(x)+(m_{1}-m_{2})(x-b)^{m_{1}-m_{2}-1}h_{2}(x)=$

$(x-b)^{m_{1}-m_{2}-1}\left[
(x-b)f[x,x,a^{m_{2}+1},b^{m_{1}+1}]+(m_{1}-m_{2})f[x,a^{m_{2}+1},b^{m_{1}+1}]%
\right] =$

$(x-b)^{m_{1}-m_{2}-1}\left(
f[x,x,a^{m_{2}+1},b^{m_{1}}]+(m_{1}-m_{2}-1)f[x,a^{m_{2}+1},b^{m_{1}+1}]%
\right) $. Since $m_{1}-m_{2}-1$ is odd, $\dfrac{d}{dx}\left[
(x-b)^{m_{1}-m_{2}}h_{2}(x)\right] <0$ on $I$, which implies that $%
(x-b)^{m_{1}-m_{2}}h_{2}(x)$ is decreasing on $I$. Thus $%
(x-a)^{m_{1}-m_{2}}h_{1}(x)$ is positive and increasing on $I$ and vanishes
at $a$, while $(x-b)^{m_{1}-m_{2}}h_{2}(x)$ is positive and decreasing on $I$
and vanishes at $b$. Hence the equation in (\ref{h1h2}) has a unique
solution $x_{0}\in I$. Since $E_{P}(x_{0})=E_{Q}(x_{0}),P(x_{0})=Q(x_{0})$,
which finishes the proof of Theorem \ref{T1}.
\end{proof}

\begin{remark}
(1) Theorem 1 was proven in \cite{h1} using a different approach and only
for the case when $m_{1}-m_{2}=2$.

(2) Heuristically speaking, we may consider the means defined in \cite{h1}
as a special case of the means above, where $m_{1}=r$ and $m_{2}=-1$. The
latter value means that no values of $f$ or any of its derivatives are
matched. However, the formulas we use do not actually work if $m_{2}=-1$.
\end{remark}

The proof of the following theorem is almost identical to the proof of
Theorem \ref{T1} and we omit it.

\begin{theorem}
\label{T2}Suppose that $m_{2}<m_{1}$ are two given nonnegative integers with 
$m_{1}-m_{2}$ odd. Let $n=m_{1}+m_{2}+1$ and let $I=(a,b),0<a<b$ be a given
open interval. Suppose that $f\in C^{n+2}(I)$ with $f^{(n+1)}(x)\neq 0$ for $%
x\in I$, and let $P$ and $Q$ satisfy the Hermite interpolation conditions
given by (\ref{PQ}). Then there is a unique $x_{0},a<x_{0}<b,$ such that $%
f(x_{0})=\tfrac{1}{2}\left( P(x_{0})+Q(x_{0})\right) $.
\end{theorem}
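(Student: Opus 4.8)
The plan is to reduce the equation $f(x)=\tfrac12\left(P(x)+Q(x)\right)$ to the vanishing of a single strictly monotone function on $I$, following the proof of Theorem \ref{T1} almost verbatim. As there, assume $f^{(n+1)}>0$ on $I$, put $E_{P}=f-P$, $E_{Q}=f-Q$, and observe that $E_{P}+E_{Q}=2f-P-Q$, so that $f(x)=\tfrac12\left(P(x)+Q(x)\right)$ holds precisely when $E_{P}(x)+E_{Q}(x)=0$. Using the error representation (\ref{epq}) and the abbreviations $h_{1}(x)=f[x,a^{m_{1}+1},b^{m_{2}+1}]$, $h_{2}(x)=f[x,a^{m_{2}+1},b^{m_{1}+1}]$, I would factor out the common power $(x-a)^{m_{2}+1}(x-b)^{m_{2}+1}$ to get
\[
E_{P}(x)+E_{Q}(x)=(x-a)^{m_{2}+1}(x-b)^{m_{2}+1}\,g(x),\qquad g(x):=(x-a)^{m_{1}-m_{2}}h_{1}(x)+(x-b)^{m_{1}-m_{2}}h_{2}(x).
\]
Since $(x-a)^{m_{2}+1}(x-b)^{m_{2}+1}\neq 0$ for $x\in I$, it suffices to show that $g$ has a unique zero in $I$.

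Next I would pin down the sign and monotonicity of the two summands of $g$, reusing the divided-difference computations from the proof of Theorem \ref{T1}. By the Mean Value Theorem for divided differences $h_{1},h_{2}>0$ on $I$ (and at the endpoints), so, because $m_{1}-m_{2}$ is odd, $(x-a)^{m_{1}-m_{2}}h_{1}(x)>0$ while $(x-b)^{m_{1}-m_{2}}h_{2}(x)<0$ for $x\in I$, the first vanishing at $a$ and the second at $b$. The derivative identity established for Theorem \ref{T1} gives $\tfrac{d}{dx}\bigl[(x-a)^{m_{1}-m_{2}}h_{1}(x)\bigr]=(x-a)^{m_{1}-m_{2}-1}\bigl(f[x,x,b^{m_{2}+1},a^{m_{1}}]+(m_{1}-m_{2}-1)f[x,b^{m_{2}+1},a^{m_{1}+1}]\bigr)$, and analogously for the $b$-term; the bracketed factor is a sum of positive $(n+1)$st-order divided differences with nonnegative coefficient, hence positive (this remains valid when $m_{1}-m_{2}=1$, the coefficient simply dropping out). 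The point where the parity enters differently from Theorem \ref{T1} is the outer power $(x-a)^{m_{1}-m_{2}-1}$: here $m_{1}-m_{2}-1$ is \emph{even}, so this factor is nonnegative on $I$, and consequently \emph{both} $(x-a)^{m_{1}-m_{2}}h_{1}(x)$ and $(x-b)^{m_{1}-m_{2}}h_{2}(x)$ are strictly increasing on $I$.

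Finally I would assemble the pieces: $g$, as a sum of two strictly increasing functions, is strictly increasing on $I$, and from the boundary behavior $g(a)=(a-b)^{m_{1}-m_{2}}h_{2}(a)<0$ while $g(b)=(b-a)^{m_{1}-m_{2}}h_{1}(b)>0$. By continuity and strict monotonicity, $g$ has exactly one zero $x_{0}\in(a,b)$, and at that point $E_{P}(x_{0})+E_{Q}(x_{0})=0$, i.e.\ $f(x_{0})=\tfrac12\left(P(x_{0})+Q(x_{0})\right)$. The only step that genuinely requires care — and the reason this is not literally the proof of Theorem \ref{T1} — is the sign bookkeeping: the odd parity of $m_{1}-m_{2}$ forces the two pieces of $g$ to have opposite signs but the \emph{same} (increasing) monotonicity, so one concludes by the intermediate value theorem applied to their sum rather than by intersecting an increasing curve with a decreasing one.
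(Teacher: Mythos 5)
Your proof is correct and is essentially the argument the paper has in mind when it says the proof of Theorem \ref{T2} is "almost identical" to that of Theorem \ref{T1}: you reuse the error representation (\ref{epq}), the same divided-difference simplification of the derivatives, and correctly track the one parity change, namely that for $m_{1}-m_{2}$ odd both $(x-a)^{m_{1}-m_{2}}h_{1}$ and $(x-b)^{m_{1}-m_{2}}h_{2}$ are increasing with opposite signs, so the conclusion follows from monotonicity and the intermediate value theorem applied to their sum.
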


\qquad The unique $x_{0}$ from Theorems \ref{T1} and \ref{T2} defines a
strict, symmetric mean, which we denote by $x_{0}=M_{f,m_{1},m_{2}}(a,b)$.
It is easy to unify the cases of $m_{1}-m_{2}$ even or odd as follows: $%
M_{f,m_{1},m_{2}}(a,b)$ is the unique solution, in $(a,b)$, of the equation $%
E_{P}(x)=(-1)^{m_{1}-m_{2}}E_{Q}(x)$. Equivalently, $M_{f,m_{1},m_{2}}(a,b)$
is the unique solution, in $(a,b)$, of the equation 
\begin{equation}
(x-a)^{m_{1}-m_{2}}f[x,a^{m_{1}+1},b^{m_{2}+1}]=(b-x)^{m_{1}-m_{2}}f[x,a^{m_{2}+1},b^{m_{1}+1}].
\label{meandef}
\end{equation}

As in \cite{h1} and \cite{h2}, we shall see that some of the familiar means,
such as the arithmetic, geometric, and harmonic means arise in certain
special cases. For $f(x)=x^{p},$ we denote $M_{f,m_{1},m_{2}}(a,b)$ by $%
M_{p,m_{1},m_{2}}(a,b)$ for any real number $p$ with $p\notin \left\{
0,1,...,n\right\} $. If $p=k,k\in \left\{ 0,1,...,n\right\} $, one can
define $M_{p,m_{1},m_{2}}$ using a limiting argument, or by defining $%
M_{p,m_{1},m_{2}}$ to be $M_{f,m_{1},m_{2}},$ where $f(x)=x^{k}\log x$. This
gives a continuous extension of $M_{p,m_{1},m_{2}}$ to all real numbers $p$.

\begin{remark}
For any polynomial $R\in \pi _{n},$ $%
M_{f-R,m_{1},m_{2}}(a,b)=M_{f,m_{1},m_{2}}(a,b)$.
\end{remark}

\qquad The following three theorems are the analogs of (\cite{h2}, Theorems
1.3 and 1.4) and (\cite{h1}, Theorem 1.8) for\ Hermite interpolation.

\begin{theorem}
\label{T3}If $p=m_{1}+m_{2}+2$, then $M_{p,m_{1},m_{2}}(a,b)=A(a,b)=\tfrac{%
a+b}{2}$.
\end{theorem}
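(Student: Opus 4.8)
The plan is to use the characterization of $M_{f,m_1,m_2}(a,b)$ as the unique solution in $(a,b)$ of equation (\ref{meandef}), together with the reduction afforded by Remark after Theorem \ref{T2}: since $M_{f-R,m_1,m_2}=M_{f,m_1,m_2}$ for any $R\in\pi_n$, and here $f(x)=x^{m_1+m_2+2}=x^{n+1}$ is a polynomial of degree exactly $n+1$, the error functions $E_P$ and $E_Q$ are themselves polynomials of degree $n+1$ with known leading coefficients. First I would observe that $f^{(n+1)}\equiv (n+1)!$, so every divided difference of order $n+1$ appearing in (\ref{epq}) equals $1$; hence $h_1(x)=f[x,a^{m_1+1},b^{m_2+1}]$ and $h_2(x)=f[x,a^{m_2+1},b^{m_1+1}]$ are both identically $1$. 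Wait — that is not quite right, since $h_1$ is an $(n+1)$st divided difference only when the node $x$ is counted; with $m_1+1+m_2+1=n+1$ fixed nodes plus the free node $x$, the divided difference $f[x,a^{m_1+1},b^{m_2+1}]$ has order $n+1$, and since $f$ is a polynomial of degree $n+1$ this divided difference is the constant leading coefficient, namely $1$. So indeed $h_1\equiv h_2\equiv 1$.

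With that simplification, equation (\ref{meandef}) collapses to $(x-a)^{m_1-m_2}=(b-x)^{m_1-m_2}$. Since $m_1-m_2$ is even (this is the case covered by Theorem \ref{T1}, which is the relevant one when $p=m_1+m_2+2$ and $m_1-m_2$ is even; if $m_1-m_2$ is odd one uses the unified form $E_P(x)=(-1)^{m_1-m_2}E_Q(x)$, i.e. $(x-a)^{m_1-m_2}h_1(x)=-(x-b)^{m_1-m_2}h_2(x)$, which with $h_1\equiv h_2\equiv1$ and $m_1-m_2$ odd gives $(x-a)^{m_1-m_2}=(b-x)^{m_1-m_2}$ as well), taking the real $(m_1-m_2)$th root that is positive on $(a,b)$ yields $x-a=b-x$, hence $x=\tfrac{a+b}{2}$. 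One then checks $\tfrac{a+b}{2}\in(a,b)$ and invokes uniqueness from Theorem \ref{T1} (resp. Theorem \ref{T2}) to conclude $M_{p,m_1,m_2}(a,b)=\tfrac{a+b}{2}=A(a,b)$.

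The only genuine subtlety — and the step I would write most carefully — is the claim that the relevant divided differences of $f(x)=x^{n+1}$ are identically equal to $1$ as functions of the free node $x$. This follows from the standard fact that for a polynomial $g$ of degree $\le k$, the $k$th divided difference $g[y_0,\dots,y_k]$ equals the coefficient of the top-degree term of $g$ (in particular it is constant in all arguments), combined with the continuity of divided differences in the confluent case cited from \cite{ik}. I would state this as a one-line lemma or simply cite it, since it is classical, and then the rest of the argument is the short computation above. No delicate estimates are needed; the work is entirely in setting up the divided-difference bookkeeping correctly.
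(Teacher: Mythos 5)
Your proposal is correct and follows essentially the same route as the paper: both reduce, via the error representations (\ref{epq}) and the unified equation (\ref{meandef}), to the observation that the two $(n+1)$st-order divided differences of $x^{n+1}$ are the same constant, so the defining equation collapses to $(x-a)^{m_1-m_2}=(b-x)^{m_1-m_2}$ with unique solution $\tfrac{a+b}{2}$. The only cosmetic difference is that you justify the constancy by the leading-coefficient property of divided differences of a degree-$(n+1)$ polynomial (correctly giving the value $1$), while the paper cites the Mean Value Theorem for divided differences; either way the constants cancel and the argument is identical.
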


\begin{proof}
If $f(x)=x^{m_{1}+m_{2}+2}$, then by the Mean Value Theorem for divided
differences, $%
f[x,a^{m_{1}+1},b^{m_{2}+1}]=f[x,a^{m_{2}+1},b^{m_{1}+1}]=(m_{1}+m_{2}+2)!$.
Thus the unique solution, $x_{0}$, in $I=(a,b)$ of the equation $%
E_{P}(x)=(-1)^{m_{1}-m_{2}}E_{Q}(x)$ is the unique solution of $%
(x-a)^{m_{1}-m_{2}}=(-1)^{m_{1}-m_{2}}(x-b)^{m_{1}-m_{2}}$, which implies
that $x_{0}=\tfrac{a+b}{2}$.
\end{proof}

\begin{theorem}
\qquad \label{T4}If $p=-1$, then $M_{p,m_{1},m_{2}}(a,b)=H(a,b)=\tfrac{2ab}{%
a+b}$ for any $m_{1}$ and $m_{2}$.
\end{theorem}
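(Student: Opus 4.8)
The plan is to apply the characterization of $M_{f,m_1,m_2}(a,b)$ from equation~(\ref{meandef}) to the function $f(x)=x^{-1}$, and show that the resulting equation is solved by $x_0 = \tfrac{2ab}{a+b}$. First I would compute the relevant divided differences $f[x,a^{m_1+1},b^{m_2+1}]$ and $f[x,a^{m_2+1},b^{m_1+1}]$ for $f(x)=1/x$. The key fact is that divided differences of $1/x$ have a clean closed form: for distinct nodes one has $f[y_0,\dots,y_k] = (-1)^k/(y_0 y_1 \cdots y_k)$, and this extends by continuity to confluent nodes, giving $f[x,a^{m_1+1},b^{m_2+1}] = (-1)^{n}/(x\, a^{m_1+1} b^{m_2+1})$ and similarly $f[x,a^{m_2+1},b^{m_1+1}] = (-1)^{n}/(x\, a^{m_2+1} b^{m_1+1})$, where $n = m_1+m_2+1$ and the total number of nodes is $n+1$.

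Next I would substitute these into~(\ref{meandef}). The equation becomes
\begin{equation*}
(x-a)^{m_1-m_2} \frac{(-1)^n}{x\, a^{m_1+1} b^{m_2+1}} = (b-x)^{m_1-m_2} \frac{(-1)^n}{x\, a^{m_2+1} b^{m_1+1}}.
\end{equation*}
Cancelling the common factor $(-1)^n/(x\, a^{m_2+1} b^{m_2+1})$ (valid since $x \in (a,b)$ with $0<a<b$, so $x \neq 0$), this reduces to $(x-a)^{m_1-m_2} b^{m_1-m_2} = (b-x)^{m_1-m_2} a^{m_1-m_2}$, i.e. $\bigl(b(x-a)\bigr)^{m_1-m_2} = \bigl(a(b-x)\bigr)^{m_1-m_2}$. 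Taking the real $(m_1-m_2)$-th root with the appropriate sign — here I would note that for $x \in (a,b)$ both $b(x-a)$ and $a(b-x)$ are positive, so regardless of the parity of $m_1-m_2$ we get $b(x-a) = a(b-x)$ — yields $bx - ab = ab - ax$, hence $x(a+b) = 2ab$, so $x_0 = \tfrac{2ab}{a+b} = H(a,b)$. Since Theorems~\ref{T1} and~\ref{T2} guarantee this solution is the unique one in $(a,b)$, we conclude $M_{-1,m_1,m_2}(a,b) = H(a,b)$.

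The main obstacle, such as it is, is justifying the closed-form expression for the confluent divided difference $f[x,a^{m_1+1},b^{m_2+1}]$ when $f(x)=1/x$. I would handle this either by citing the standard continuity extension of divided differences (as in~\cite{ik}, already invoked in the proof of Theorem~\ref{T1}) together with the elementary identity $\left(\tfrac1x\right)[y_0,\dots,y_k] = \tfrac{(-1)^k}{y_0\cdots y_k}$ for distinct nodes — which follows by induction on $k$ from the defining recursion — or, more self-containedly, by invoking Remark~3: since $1/x$ agrees to all required orders at $a$ and $b$ with no polynomial in $\pi_n$, that route is unavailable, so the divided-difference computation is genuinely the crux. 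A clean alternative is to use the Hermite–Genocchi integral representation, but the inductive formula for $(1/x)[\,\cdot\,]$ is the most direct. Everything after that substitution is elementary algebra.
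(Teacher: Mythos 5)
Your proposal is correct and follows essentially the same route as the paper: the closed form $f[y_0,\dots,y_k]=\tfrac{(-1)^k}{y_0\cdots y_k}$ for $f(x)=1/x$ (the paper cites Ostrowski for this), substituted into the defining equation (\ref{meandef}), reducing to $b(x-a)=a(b-x)$ and hence $x_0=\tfrac{2ab}{a+b}$. One immaterial slip: the divided difference $f[x,a^{m_1+1},b^{m_2+1}]$ has $n+2$ nodes, so the sign is $(-1)^{n+1}=(-1)^{m_1+m_2}$ rather than $(-1)^n$, but since the same factor appears on both sides it cancels and does not affect the argument.
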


\begin{proof}
If $f(x)=\tfrac{1}{x}$, then $f[x_{0},x_{1},...,x_{n}]=\tfrac{(-1)^{n}}{%
x_{0}x_{1}\cdots x_{n}}$ (see \cite{ost}, page 11, formula (4)]. It then
follows easily that $f[x,a^{m_{1}+1},b^{m_{2}+1}]=\tfrac{(-1)^{m_{1}+m_{2}}}{%
a^{m_{1}+1}b^{m_{2}+1}x}$ and $f[x,a^{m_{2}+1},b^{m_{1}+1}]=\tfrac{%
(-1)^{m_{1}+m_{2}}}{a^{m_{2}+1}b^{m_{1}+1}x}$; Thus the unique solution, $%
x_{0}$, in $I=(a,b)$, of the equation $E_{P}(x)=(-1)^{m_{1}-m_{2}}E_{Q}(x)$
is the unique solution of $(x-a)^{m_{1}-m_{2}}\tfrac{(-1)^{m_{1}+m_{2}}}{%
a^{m_{1}+1}b^{m_{2}+1}x}=(-1)^{m_{1}-m_{2}}(x-b)^{m_{1}-m_{2}}\tfrac{%
(-1)^{m_{1}+m_{2}}}{a^{m_{2}+1}b^{m_{1}+1}x}$, which is equivalent to $%
(x-a)^{m_{1}-m_{2}}b^{m_{1}-m_{2}}=(x-b)^{m_{1}-m_{2}}a^{m_{1}-m_{2}}%
\Rightarrow x=\tfrac{2ab}{a+b}$.
\end{proof}

$(-1)^{m_{1}-m_{2}}E_{Q}(x)$ is the unique solution of 

\qquad Theorems \ref{T3} and \ref{T4} show that the arithmetic and harmonic
means arise as the $x$ coordinates of the intersection point of Hermite
interpolants. Our next result shows that the geometric mean arises as well,
but the proof is considerably more difficult.

\begin{theorem}
\label{T5}If $p=\tfrac{m_{1}+m_{2}+1}{2}$, where $m_{1}+m_{2}$ is even, then 
$M_{p,m_{1},m_{2}}(a,b)=G(a,b)=\sqrt{ab}$.
\end{theorem}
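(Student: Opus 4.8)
The plan is to exploit the fact that the monomial $f(x)=x^{n/2}$ reproduces itself under the reciprocal reflection $\sigma(x)=ab/x$, whose unique fixed point is $\sqrt{ab}\in(a,b)$. First note that since $m_1+m_2$ is even, $m_1-m_2$ is also even, so Theorem~\ref{T1} applies: the defining equation $E_P(x)=(-1)^{m_1-m_2}E_Q(x)$ reduces to $P(x)=Q(x)$, and this has a unique solution $x_0\in(a,b)$. The hypotheses of Theorem~\ref{T1} do hold for $f(x)=x^{n/2}$: since $n=m_1+m_2+1$ is odd, $p=n/2$ is a half-integer, hence $p\notin\{0,1,\dots,n\}$, so $f\in C^\infty(I)$ and $f^{(n+1)}(x)=p(p-1)\cdots(p-n)\,x^{p-n-1}\neq0$ on $I$. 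Because $0<a<b$ gives $a<\sqrt{ab}<b$, it therefore suffices to check that $x=\sqrt{ab}$ solves $P(x)=Q(x)$; the uniqueness in Theorem~\ref{T1} then forces $M_{p,m_1,m_2}(a,b)=\sqrt{ab}=G(a,b)$.

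The crux is the identity $Q(x)=(ab)^{-n/2}\,x^{n}P(ab/x)$. To establish it I would introduce the linear operator $R$, acting on functions on $(0,\infty)$, by $R[g](x)=(ab)^{-n/2}\,x^{n}\,g(ab/x)$, and record three properties. (i) $R$ maps $\pi_n$ into $\pi_n$: if $g(y)=\sum_{k=0}^{n}c_ky^k$ then $R[g](x)=(ab)^{-n/2}\sum_{k=0}^{n}c_k(ab)^k x^{\,n-k}$, a polynomial of degree at most $n$. (ii) $R$ fixes $f$: $x^n f(ab/x)=x^n(ab)^{n/2}x^{-n/2}=(ab)^{n/2}x^{n/2}=(ab)^{n/2}f(x)$, so $R[f]=f$. (iii) If a function $h$ agrees with $0$ to order $m$ at a point $c\in\{a,b\}$ (that is, $h^{(j)}(c)=0$ for $j=0,\dots,m$), then $R[h]$ agrees with $0$ to order $m$ at $ab/c$: indeed $h(y)=O((y-c)^{m+1})$ as $y\to c$, the substitution $y=ab/x$ is $C^\infty$ near $x=ab/c$ with a simple zero of $y-c$ there, so $h(ab/x)=O((x-ab/c)^{m+1})$, and multiplication by the $C^\infty$ factor $(ab)^{-n/2}x^n$ preserves this. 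Applying these to $h=P-f$, which vanishes to order $m_1$ at $a$ and to order $m_2$ at $b$, we get that $R[P]-f=R[P-f]$ vanishes to order $m_1$ at $b$ and to order $m_2$ at $a$. Hence $R[P]\in\pi_n$ satisfies exactly the Hermite conditions in (\ref{PQ}) defining $Q$, and uniqueness of the Hermite interpolant gives $R[P]=Q$.

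It then remains only to evaluate at the fixed point of $\sigma$: since $ab/\sqrt{ab}=\sqrt{ab}$ and $(\sqrt{ab})^n=(ab)^{n/2}$,
\[
Q(\sqrt{ab})=R[P](\sqrt{ab})=(ab)^{-n/2}(ab)^{n/2}P(\sqrt{ab})=P(\sqrt{ab}),
\]
so $\sqrt{ab}$ solves $P(x)=Q(x)$ in $(a,b)$, and by the uniqueness part of Theorem~\ref{T1} we conclude $M_{p,m_1,m_2}(a,b)=\sqrt{ab}$.

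I expect the only delicate point to be the bookkeeping in property (iii): verifying that composing with the Möbius map $x\mapsto ab/x$ and multiplying by $x^n$ preserves the order of osculation between $P$ and $f$ at $a$ and $b$ while interchanging the two nodes; everything else is a short computation together with an appeal to Theorem~\ref{T1}. An alternative route avoiding the operator $R$ is to substitute $x=ab/t$ directly into (\ref{meandef}) and invoke a divided-difference inversion formula of the shape $g[y_0,\dots,y_N]=\tfrac{(-1)^N ab}{y_0\cdots y_N}\,f[ab/y_0,\dots,ab/y_N]$ for $g(x)=f(ab/x)$; this works but is messier, because here $g$ is a constant multiple of $x^{-n/2}$ rather than of $f$ itself, so one does not immediately get a clean relation between the two divided differences appearing in (\ref{meandef}).
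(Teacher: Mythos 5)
Your proof is correct, and it takes a genuinely different route from the paper. The paper's proof is computational: it invokes Spitzbart's formula to expand the confluent divided differences $f[x,a^{m_{1}+1},b^{m_{2}+1}]$ and $f[x,a^{m_{2}+1},b^{m_{1}+1}]$ into double sums, uses homogeneity to reduce to $a=1/b$, $x=1$, and then establishes the needed identity through three combinatorial lemmas (Lemmas \ref{L0}--\ref{L2}), proved by induction together with a ${}_{2}F_{1}$ transformation. You instead exploit the symmetry $x\mapsto ab/x$: the operator $R[g](x)=(ab)^{-n/2}x^{n}g(ab/x)$ preserves $\pi_{n}$, fixes $f(x)=x^{n/2}$, and converts osculation of order $m$ at $c\in\{a,b\}$ into osculation of order $m$ at $ab/c$, so by uniqueness of Hermite interpolation $R[P]=Q$; evaluating at the fixed point $\sqrt{ab}$ gives $P(\sqrt{ab})=Q(\sqrt{ab})$, and Theorem \ref{T1} (applicable since $m_{1}+m_{2}$ even forces $m_{1}-m_{2}$ even, and $p=n/2\notin\{0,1,\dots,n\}$ so $f^{(n+1)}\neq 0$ on $(0,\infty)$) supplies uniqueness. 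This is considerably shorter and more conceptual, and it avoids the identity machinery entirely; interestingly, the paper's own remark after Theorem \ref{T5} concedes that a shorter proof was sought but not found. What the paper's longer route buys is the explicit Spitzbart expansions $A_{j},B_{j},C_{j}$, which are reused later (e.g.\ in the proof of Theorem \ref{T11}), whereas your argument is tailored to the eigenfunction $x^{n/2}$. The one point to tighten is step (iii): from $R[h](x)=O((x-ab/c)^{m+1})$ you should say explicitly why the derivatives of $R[h]$ up to order $m$ vanish at $ab/c$ --- either cite Taylor's theorem for the smooth function $R[h]$, or, cleaner, write $h(y)=(y-c)^{m+1}\psi(y)$ with $\psi$ smooth and observe that $y-c=-c(x-ab/c)/x$, so $R[h](x)=(x-ab/c)^{m+1}\varphi(x)$ with $\varphi$ smooth near $ab/c>0$. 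With that bookkeeping made explicit, the argument is complete.
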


\begin{remark}
Theorem \ref{T5} does not hold if $m_{1}+m_{2}$ is odd. In that case, $p$ is
a positive integer strictly less than $m_{1}+m_{2}+1$, which implies that $%
f^{(m_{1}+m_{2}+2)}(x)\equiv 0$.
\end{remark}

Before proving Theorem \ref{T5}, we need three lemmas.

\begin{lemma}
\label{L0}Let $m_{1}\geq 0$ be any integer. Then 
\begin{equation}
\tsum\limits_{k=0}^{m_{1}}(-1)^{k}\tbinom{m_{1}/2}{k}(1-b)^{k}=b^{m_{1}}%
\tsum\limits_{k=0}^{m_{1}}\tbinom{m_{1}/2}{k}(1-b)^{k}b^{-k},b\neq 0
\label{0}
\end{equation}
\end{lemma}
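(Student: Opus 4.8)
The plan is to read (\ref{0}) as an identity between two \emph{polynomials} in $b$. After multiplying the right-hand side out, the factor $b^{m_{1}}$ cancels the $b^{-k}$'s (legitimately, since in each surviving term $0\le k\le m_{1}$), and the claim becomes
\[
\sum_{k=0}^{m_{1}}(-1)^{k}\tbinom{m_{1}/2}{k}(1-b)^{k}\;=\;\sum_{k=0}^{m_{1}}\tbinom{m_{1}/2}{k}(1-b)^{k}b^{\,m_{1}-k}.
\]
I would prove this by completing each side to an infinite binomial series in the variable $1-b$ (working in the ring of formal power series, or equivalently with convergent series for $b$ near $1$), showing the two completions are equal, and then controlling the orders of the discarded tails.

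First I would record the completions. Write $\alpha=m_{1}/2$. The left side is the truncation to degree $m_{1}$ of $\sum_{k\ge 0}\tbinom{\alpha}{k}\bigl(-(1-b)\bigr)^{k}=b^{\alpha}$. For the right side, set $u=1-b$, factor out $(1-u)^{m_{1}}$, and apply the formal substitution $\sum_{k\ge 0}\tbinom{\alpha}{k}w^{k}=(1+w)^{\alpha}$ with $w=u/(1-u)$; using $1+u/(1-u)=1/(1-u)$ and the exponent law one gets $(1-u)^{m_{1}}\cdot(1-u)^{-\alpha}=(1-u)^{m_{1}-\alpha}=(1-u)^{\alpha}=b^{\alpha}$, because $m_{1}-\alpha=\alpha$. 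Thus both infinite series equal $b^{m_{1}/2}$, so the difference of the two truncations equals the difference of the two tails $\sum_{k>m_{1}}(\cdots)$.

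Now the order argument finishes it. The difference of the truncations is a polynomial in $b$ of degree $\le m_{1}$ (each summand $\tbinom{\alpha}{k}(1-b)^{k}b^{\,m_{1}-k}$ with $0\le k\le m_{1}$ has degree exactly $m_{1}$). On the other hand, the difference of the tails, expanded in powers of $1-b$, has order $\ge m_{1}+1$: the terms $\bigl(-(1-b)\bigr)^{k}$ with $k>m_{1}$ obviously so, and each $(1-b)^{k}b^{\,m_{1}-k}=(1-b)^{k}\bigl(1-(1-b)\bigr)^{m_{1}-k}$ with $k>m_{1}$ has lowest-order term of degree $k\ge m_{1}+1$ (the factor $\bigl(1-(1-b)\bigr)^{m_{1}-k}$ has constant term $1$). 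A polynomial of degree $\le m_{1}$ that vanishes to order $m_{1}+1$ at $b=1$ is identically zero, which yields the displayed identity; dividing by $b^{m_{1}}$ recovers (\ref{0}) for $b\neq 0$.

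The one genuinely delicate point — the main obstacle — is that truncation does not commute with the substitution $w=(1-b)/b$, so one cannot simply assert that both truncated sums equal the degree-$m_{1}$ Taylor polynomial of $b^{m_{1}/2}$; it is precisely the order bookkeeping in the last step that forces the two truncations to agree, and this hinges on the identity $m_{1}-\alpha=\alpha$, i.e.\ on the exponent being exactly $m_{1}/2$ (the analogous statement fails for a generic exponent). A modest amount of care is also needed to justify the formal power-series manipulations (working in $\Re[[1-b]]$, or with $|1-b|$ small), but that is routine.
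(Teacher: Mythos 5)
Your proof is correct, granting the routine power-series facts you flag (the exponent law $(1-u)^{s}(1-u)^{t}=(1-u)^{s+t}$ for binomial series and the legitimacy of the substitution $w=u/(1-u)$, both valid in $\Re [[u]]$ or analytically for $b$ near $1$, after which the polynomial identity extends to all $b$ and division by $b^{m_{1}}$ gives (\ref{0})). It is, however, a genuinely different argument from the paper's. The paper proves Lemma \ref{L0} by induction on $m_{1}$ in steps of two: writing $G_{m_{1}}$ and $H_{m_{1}}$ for the two sides, it derives a recursion expressing $G_{m_{1}+2}$ as $bG_{m_{1}}$ plus explicit boundary terms, transfers it to $H$ via the reflection $H_{m_{1}}(b)=b^{m_{1}}G_{m_{1}}(1/b)$, and reduces the induction step to a short binomial-coefficient identity (trivial when $m_{1}$ is even, a one-line computation when $m_{1}$ is odd). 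You instead complete both truncated sums to full binomial series in $u=1-b$, identify each with $b^{m_{1}/2}$ --- this is precisely where the exponent $m_{1}/2$ enters, through $m_{1}-\alpha =\alpha $ --- and then dispose of the difference of truncations by the degree-versus-order comparison: a polynomial of degree at most $m_{1}$ in $u$ that agrees with a series of order at least $m_{1}+1$ at $u=0$ must vanish. Your handling of the one delicate point (truncation does not commute with the substitution) is exactly right. Your route is shorter and more conceptual, explains why the exponent $m_{1}/2$ is forced, and is in fact close in spirit to the paper's own proof of Lemma \ref{L1} (of which Lemma \ref{L0} is essentially the case $m_{2}=0$), which is obtained there from a Pfaff-type transformation of $\,_{2}F_{1}$; what the paper's induction buys is a completely elementary, finite verification that never leaves the ring of polynomials and needs no convergence or formal-composition justification.
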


\begin{proof}
We use induction in $m_{1}$. First, it is trivial that (\ref{0}) holds when $%
m_{1}=0$ or $m_{1}=1$. Now let $G_{m_{1}}(b)=\tsum%
\limits_{k=0}^{m_{1}}(-1)^{k}\tbinom{m_{1}/2}{k}(1-b)^{k}$ and $%
H_{m_{1}}(b)=b^{m_{1}}\tsum\limits_{k=0}^{m_{1}}\tbinom{m_{1}/2}{k}%
(1-b)^{k}b^{-k}$ denote the left and right hand sides of (\ref{0}),
respectively. Then $G_{m_{1}+2}(b)=\tsum\limits_{k=0}^{m_{1}+2}(-1)^{k}%
\tbinom{m_{1}/2+1}{k}(1-b)^{k}=(-1)^{m_{1}+1}\tbinom{m_{1}/2+1}{m_{1}+1}%
(1-b)^{m_{1}+1}+$

$(-1)^{m_{1}+2}\tbinom{m_{1}/2+1}{m_{1}+2}(1-b)^{m_{1}+2}+\tsum%
\limits_{k=0}^{m_{1}}(-1)^{k}\tbinom{m_{1}/2}{k}(1-b)^{k}+$

$\tsum\limits_{k=0}^{m_{1}-1}(-1)^{k+1}\tbinom{m_{1}/2}{k}%
(1-b)^{k+1}=(-1)^{m_{1}+1}\tbinom{m_{1}/2+1}{m_{1}+1}(1-b)^{m_{1}+1}+$

$(-1)^{m_{1}+2}\tbinom{m_{1}/2+1}{m_{1}+2}%
(1-b)^{m_{1}+2}+G_{m_{1}}(b)-(1-b)G_{m_{1}}(b)-$

$(-1)^{m_{1}+2}\tbinom{m_{1}/2}{m_{1}}%
(1-b)^{m_{1}+1}=bG_{m_{1}}(b)+(-1)^{m_{1}+1}\tbinom{m_{1}/2+1}{m_{1}+1}%
(1-b)^{m_{1}+1}+$

$(-1)^{m_{1}+2}\tbinom{m_{1}/2+1}{m_{1}+2}(1-b)^{m_{1}+2}-(-1)^{m_{1}+1}%
\tbinom{m_{1}/2}{m_{1}}(1-b)^{m_{1}+1}$

It is easy to show that $b^{m_{1}}G_{m_{1}}\left( \tfrac{1}{b}\right)
=H_{m_{1}}(b)$. Thus $H_{m_{1}+2}(b)=$

$b^{m_{1}+2}G_{m_{1}+2}\left( \tfrac{1}{b}\right) =b^{m_{1}+2}(-1)^{m_{1}+1}%
\tbinom{m_{1}/2+1}{m_{1}+1}(1-1/b)^{m_{1}+1}+$

$b^{m_{1}+2}(-1)^{m_{1}+2}\tbinom{m_{1}/2+1}{m_{1}+2}%
(1-1/b)^{m_{1}+2}+b^{m_{1}+1}G_{m_{1}}\left( \tfrac{1}{b}\right) -$

$b^{m_{1}+2}(-1)^{m_{1}+1}\tbinom{m_{1}/2}{m_{1}}%
(1-1/b)^{m_{1}+1}=bH_{m_{1}}(b)+$

$b(-1)^{m_{1}+1}\tbinom{m_{1}/2+1}{m_{1}+1}(b-1)^{m_{1}+1}+$

$(-1)^{m_{1}+2}\tbinom{m_{1}/2+1}{m_{1}+2}(b-1)^{m_{1}+2}-b(-1)^{m_{1}+1}%
\tbinom{m_{1}/2}{m_{1}}(b-1)^{m_{1}+1}$. Assuming that $%
G_{m_{1}}(b)=H_{m_{1}}(b)$, we have that $G_{m_{1}+2}(b)=H_{m_{1}+2}(b)\iff
(-1)^{m_{1}+1}\tbinom{m_{1}/2+1}{m_{1}+1}(1-b)^{m_{1}+1}+$

$(-1)^{m_{1}+2}\tbinom{m_{1}/2+1}{m_{1}+2}(1-b)^{m_{1}+2}-(-1)^{m_{1}+1}%
\tbinom{m_{1}/2}{m_{1}}(1-b)^{m_{1}+1}=$

$b(-1)^{m_{1}+1}\tbinom{m_{1}/2+1}{m_{1}+1}(b-1)^{m_{1}+1}+$

$(-1)^{m_{1}+2}\tbinom{m_{1}/2+1}{m_{1}+2}(b-1)^{m_{1}+2}-b(-1)^{m_{1}+1}%
\tbinom{m_{1}/2}{m_{1}}(b-1)^{m_{1}+1}$. If $m_{1}$ is even, then the
equality holds trivially since all terms involved are $0$. So assume now
that $m_{1}$ is odd. Then

$G_{m_{1}+2}(b)=H_{m_{1}+2}(b)\iff \tbinom{m_{1}/2+1}{m_{1}+1}%
(1-b)^{m_{1}+1}-\tbinom{m_{1}/2+1}{m_{1}+2}(1-b)^{m_{1}+2}-\tbinom{m_{1}/2}{%
m_{1}}(1-b)^{m_{1}+1}=b\tbinom{m_{1}/2+1}{m_{1}+1}(b-1)^{m_{1}+1}-\tbinom{%
m_{1}/2+1}{m_{1}+2}(b-1)^{m_{1}+2}-b\tbinom{m_{1}/2}{m_{1}}%
(b-1)^{m_{1}+1}\iff $

$\tbinom{m_{1}/2+1}{m_{1}+1}(1-b)^{m_{1}+2}-2\tbinom{m_{1}/2+1}{m_{1}+2}%
(1-b)^{m_{1}+2}-\tbinom{m_{1}/2}{m_{1}}(1-b)^{m_{1}+2}=\allowbreak 0\iff $

$\tbinom{m_{1}/2+1}{m_{1}+1}-2\tbinom{m_{1}/2+1}{m_{1}+2}-\tbinom{m_{1}/2}{%
m_{1}}=\allowbreak 0\iff \tbinom{m_{1}/2}{m_{1}+1}-2\tbinom{m_{1}/2+1}{%
m_{1}+2}=\allowbreak 0\iff $

$\tfrac{\left( \tfrac{m_{1}}{2}\right) \left( \tfrac{m_{1}}{2}-1\right)
\cdots \left( \tfrac{m_{1}}{2}-m_{1}\right) }{(m_{1}+1)!}-2\tfrac{\left( 
\tfrac{m_{1}}{2}+1\right) \left( \tfrac{m_{1}}{2}-1\right) \cdots \left( 
\tfrac{m_{1}}{2}-m_{1}\right) }{(m_{1}+2)!}=0\iff $

$\tfrac{m_{1}(m_{1}-2)\cdots (m_{1}-2m_{1})}{2^{m_{1}+1}(m_{1}+1)!}-\tfrac{%
(m_{1}+2)(m_{1}-2)\cdots (m_{1}-2m_{1})}{2^{m_{1}+1}(m_{1}+2)!}=0\iff $

$(m_{1}+2)m_{1}(m_{1}-2)\cdots (-m_{1})-(m_{1}+2)m_{1}(m_{1}-2)\cdots
(-m_{1})=0$. That completes the proof of Lemma \ref{L0}.
\end{proof}

\begin{lemma}
\label{L1}Let $m_{1}$ and $m_{2}$ be any integers. Then if $y>0,$%
\begin{equation*}
\tsum\limits_{k=0}^{m_{1}}\tbinom{m_{2}+m_{1}-k}{m_{2}}\tbinom{%
(m_{2}+m_{1})/2}{k}(-1)^{k}(1-y)^{k}=y^{m_{1}}\tsum\limits_{k=0}^{m_{1}}%
\tbinom{m_{2}+m_{1}-k}{m_{2}}\tbinom{(m_{2}+m_{1})/2}{k}(1-y)^{k}y^{-k}
\end{equation*}
\end{lemma}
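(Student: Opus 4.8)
The plan is to recast the lemma as a palindromy (self-reciprocity) statement about a single polynomial. Put $N=m_{1}+m_{2}$ and
\[
G(y)=\sum_{k=0}^{m_{1}}\binom{N-k}{m_{2}}\binom{N/2}{k}(-1)^{k}(1-y)^{k}.
\]
The left-hand side of the asserted identity is $G(y)$, and since $(1-y)^{k}y^{-k}=(-1)^{k}(1-1/y)^{k}$ the right-hand side is exactly $y^{m_{1}}G(1/y)$. So the lemma is equivalent to the identity $G(y)=y^{m_{1}}G(1/y)$; equivalently, writing $G(y)=\sum_{j}c_{j}y^{j}$, to the statement that $c_{j}=c_{m_{1}-j}$ for $0\leq j\leq m_{1}$.

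To make that symmetry visible I would compute $G$ in closed form by coefficient extraction in an auxiliary variable $z$. Using $\binom{N-k}{m_{2}}=[z^{m_{2}}](1+z)^{N-k}$ (legitimate since $N-k\geq m_{2}\geq 0$ for every $k$ occurring in the sum),
\[
G(y)=[z^{m_{2}}]\,(1+z)^{N}\sum_{k=0}^{m_{1}}\binom{N/2}{k}\Bigl(\frac{-(1-y)}{1+z}\Bigr)^{k}.
\]
Here the hypotheses enter: $m_{2}<m_{1}$ forces $m_{1}>N/2$, and when $m_{1}+m_{2}$ is even $N/2$ is a nonnegative integer, so $\binom{N/2}{k}=0$ for $k>N/2$ and the sum may be extended over all $k\geq 0$, summing to $\bigl(1-\frac{1-y}{1+z}\bigr)^{N/2}$. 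Since $(1+z)^{N}\bigl(1-\frac{1-y}{1+z}\bigr)^{N/2}=\bigl((1+z)(z+y)\bigr)^{N/2}$, this gives
\[
G(y)=[z^{m_{2}}]\bigl((1+z)(z+y)\bigr)^{N/2}=\sum_{c=0}^{m_{2}}\binom{N/2}{c}\binom{N/2}{m_{2}-c}\,y^{\,N/2-c}.
\]
From this the required symmetry is immediate: the exponents $N/2-c$ for $0\leq c\leq m_{2}$ all lie in $[0,m_{1}]$, and the involution $c\mapsto m_{2}-c$ fixes each coefficient $\binom{N/2}{c}\binom{N/2}{m_{2}-c}$ while sending the exponent $N/2-c$ to $m_{1}-(N/2-c)$ (using $m_{1}-N/2=N/2-m_{2}$). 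That is exactly $c_{j}=c_{m_{1}-j}$, hence $G(y)=y^{m_{1}}G(1/y)$, which is the claim.

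The one genuinely delicate point — the part to watch — is the passage from the truncated sum $\sum_{k=0}^{m_{1}}$ to the full binomial series before invoking the binomial theorem; it relies on $N/2$ being a nonnegative integer, i.e.\ on $m_{1}+m_{2}$ being even, which is precisely the case needed later in the proof of Theorem~\ref{T5}. If one wants the statement verbatim for all integers $m_{1},m_{2}$, the safe route is instead to imitate the proof of Lemma~\ref{L0}: induct on $m_{1}$ in steps of two with fixed parity, starting from the trivial base cases and using Pascal's rule for $\binom{N-k}{m_{2}}$ together with $\binom{N/2+1}{k}=\binom{N/2}{k}+\binom{N/2}{k-1}$; the bookkeeping is longer but structurally the same as what is carried out there.
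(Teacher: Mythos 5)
Your reduction to the palindromy $G(y)=y^{m_{1}}G(1/y)$ and the coefficient-extraction computation are correct, but only in the case where $N/2=(m_{1}+m_{2})/2$ is a nonnegative integer, and that is exactly the case the paper does \emph{not} need. Lemma \ref{L1} is not invoked with the parameters of Theorem \ref{T5} themselves; it is invoked inside the proof of Lemma \ref{L2}, where the second binomial coefficient has upper argument $p=(m_{1}+m_{2}+1)/2$: identity (\ref{1}) is Lemma \ref{L1} with its ``$m_{1}$'' equal to $m_{1}+1$ and its ``$m_{2}$'' equal to $m_{2}$, hence with parameter sum $m_{1}+m_{2}+1$ (and similarly in Cases 1 and 2, where the upper argument is again $p$). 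Since Theorem \ref{T5} assumes $m_{1}+m_{2}$ even, every such application has half-integer upper argument. In that situation $\tbinom{N/2}{k}$ does not vanish for $k>N/2$, so the truncated sum cannot be completed to the full binomial series, and your closed form $G(y)=\sum_{c}\tbinom{N/2}{c}\tbinom{N/2}{m_{2}-c}y^{N/2-c}$ breaks down (for $m_{1}=2$, $m_{2}=1$ the left side is $\tfrac{3}{8}+\tfrac{9}{4}y+\tfrac{3}{8}y^{2}$ --- the identity itself still holds, being palindromic, but it is not given by your formula). So your remark that evenness of $m_{1}+m_{2}$ is ``precisely the case needed later'' rests on conflating the parameters of Lemma \ref{L1} with those of Theorem \ref{T5}; your main argument proves the lemma exactly where the paper does not use it.

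The fallback you offer --- induct on $m_{1}$ in steps of two, imitating Lemma \ref{L0} --- is only asserted, not carried out, and it is not obviously routine: unlike in Lemma \ref{L0}, the factor $\tbinom{m_{2}+m_{1}-k}{m_{2}}$ changes with $m_{1}$, so passing from $m_{1}$ to $m_{1}+2$ does not merely append two terms, and the required Pascal-type manipulations are precisely the content one would have to supply. The paper sidesteps the parity issue entirely: it writes the sum as $\tbinom{m_{1}+m_{2}}{m_{2}}\,{}_{2}F_{1}\left(-\tfrac{1}{2}m_{1}-\tfrac{1}{2}m_{2},-m_{1};-m_{1}-m_{2};-x\right)$ and applies the Pfaff transformation $(1-x)^{b}\,{}_{2}F_{1}(a,b;c;x)={}_{2}F_{1}\left(c-a,b;c;\tfrac{x}{x-1}\right)$, which is insensitive to whether $\tfrac{1}{2}(m_{1}+m_{2})$ is an integer. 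To make your write-up a proof of the lemma as used, you must either carry out the parity-preserving induction in full or switch to an argument (such as the hypergeometric one, or a Zeilberger-style certificate) valid for half-integer upper arguments; as it stands there is a genuine gap.
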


\begin{proof}
It is not hard to show, using, for example, the methods in \cite{pwz}, that $%
\tfrac{1}{\tbinom{m_{1}+m_{2}}{m_{2}}}\sum\limits_{k=0}^{m_{1}}\binom{%
m_{2}+m_{1}-k}{m_{2}}\binom{(m_{2}+m_{1})/2}{k}x^{k}=\,_{2}F_{1}\left( -%
\tfrac{1}{2}m_{1}-\tfrac{1}{2}m_{2},-m_{1}\text{;}-m_{1}-m_{2}\text{;}%
-x\right) $, where $_{2}F_{1}\left( [a,b],[c],z\right) $ is the
hypergeometric function $\sum\limits_{k\geq 0}\tfrac{(a)_{k}(b)_{k}}{(c)_{k}}%
\tfrac{z^{k}}{k!}$. Thus it suffices to prove that $_{2}F_{1}\left( -\tfrac{1%
}{2}m_{1}-\tfrac{1}{2}m_{2},-m_{1}\text{;}-m_{1}-m_{2}\text{;}1-y\right) =$

$y^{m_{1}}\,_{2}F_{1}\left( -\tfrac{1}{2}m_{1}-\tfrac{1}{2}m_{2},-m_{1}\text{%
;}-m_{1}-m_{2}\text{; }\tfrac{y-1}{y}\right) $. The latter equality follows
from the identity $(1-x)^{b}\,_{2}F_{1}\left( a,b\text{; }c\text{; }x\right)
=\,_{2}F_{1}\left( c-a,b\text{; }c\text{; }\tfrac{x}{x-1}\right) ,x\notin
(1,\infty )$ with $a=-\tfrac{1}{2}m_{1}-\tfrac{1}{2}%
m_{2},b=-m_{1},c=-m_{1}-m_{2}$, and $x=1-y$. That proves Lemma \ref{L1}.
\end{proof}

We now use Lemmas \ref{L0} and \ref{L1} to prove the following identity.

\begin{lemma}
\label{L2}If $m_{1}\geq 0$ and $m_{2}$ are any integers, if $p=\tfrac{%
m_{1}+m_{2}+1}{2}$, and if $b\neq -1$ and $b\neq 0$, then 
\begin{equation*}
\tsum\limits_{k=0}^{m_{1}}\tsum\limits_{l=0}^{m_{1}-k}\tbinom{m_{2}+l}{m_{2}}%
(-1)^{k}\tbinom{p}{k}(1-b)^{k}(1+b)^{-l}=b^{m_{1}}\tsum\limits_{k=0}^{m_{1}}%
\tsum\limits_{l=0}^{m_{1}-k}\tbinom{m_{2}+l}{m_{2}}\tbinom{p}{k}%
(1-b)^{k}(1+b)^{-l}b^{l-k}.
\end{equation*}
\end{lemma}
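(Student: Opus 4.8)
The plan is to reduce Lemma~\ref{L2} to Lemma~\ref{L1} by evaluating the inner sum over $l$ in closed form. First I would fix $k$ and look at the coefficient of $(-1)^k\binom{p}{k}(1-b)^k$ on the left: it is $\sum_{l=0}^{m_{1}-k}\binom{m_{2}+l}{m_{2}}(1+b)^{-l}$. On the right, after pulling out $b^{m_{1}}$, the corresponding coefficient of $\binom{p}{k}(1-b)^k b^{-k}$ is $\sum_{l=0}^{m_{1}-k}\binom{m_{2}+l}{m_{2}}(1+b)^{-l}b^{l}$. So the identity to be proved has exactly the shape of Lemma~\ref{L1} — an expression $S(y)=\sum_{k}c_k(-1)^k\binom{p}{k}(1-b)^k$ equals $b^{m_{1}}$ times its "reciprocal-twisted" version $\sum_k c_k \binom{p}{k}(1-b)^k b^{-k}$ — but the coefficients $c_k$ are now partial sums of binomials rather than the single binomial $\binom{m_{2}+m_{1}-k}{m_{2}}$ appearing in Lemma~\ref{L1}.

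The key bridge is the hockey-stick identity: $\sum_{l=0}^{m_{1}-k}\binom{m_{2}+l}{m_{2}}=\binom{m_{2}+m_{1}-k+1}{m_{2}+1}$ when $b=0$ — but here we have the weighted version $\sum_{l=0}^{m_{1}-k}\binom{m_{2}+l}{m_{2}}t^{l}$ with $t=(1+b)^{-1}$ on the left and $t=b/(1+b)$ on the right. I would therefore introduce the generating-function/partial-sum identity
\begin{equation*}
\sum_{l=0}^{N}\binom{m_{2}+l}{m_{2}}t^{l}=\sum_{j=0}^{m_{2}+1}(-1)^{j}\binom{m_{2}+1}{j}\binom{m_{2}+N+1-j}{m_{2}+1-j}\,\frac{t^{N+1-?}}{\cdots}
\end{equation*}
— more cleanly, use $(1-t)^{-(m_{2}+1)}=\sum_{l\ge0}\binom{m_{2}+l}{m_{2}}t^{l}$ together with a truncation remainder, so that $\sum_{l=0}^{N}\binom{m_{2}+l}{m_{2}}t^{l}=(1-t)^{-(m_{2}+1)}\bigl(1-\sum_{i=0}^{m_{2}}\binom{N+1+i}{i}t^{N+1}(1-t)^{i}\bigr)$ or an equivalent finite form. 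Substituting $t=(1+b)^{-1}$ gives $(1-t)^{-(m_{2}+1)}=\bigl(\tfrac{b}{1+b}\bigr)^{-(m_{2}+1)}=\bigl(\tfrac{1+b}{b}\bigr)^{m_{2}+1}$, while $t=\tfrac{b}{1+b}$ gives $(1-t)^{-(m_{2}+1)}=(1+b)^{m_{2}+1}$; the ratio of these two leading factors is $b^{m_{2}+1}$, and the remaining combinatorial pieces should match the structure of Lemma~\ref{L1} with $m_{2}$ shifted. Concretely, after this substitution the left side of Lemma~\ref{L2} becomes a linear combination, over the truncation index, of sums of the exact form $\sum_{k=0}^{m_{1}'}\binom{m_{2}'+m_{1}'-k}{m_{2}'}\binom{(m_{1}'+m_{2}')/2}{k}(-1)^k(1-b)^k$ — after absorbing the condition $p=\tfrac{m_{1}+m_{2}+1}{2}$ into $\binom{(m_{1}'+m_{2}')/2}{k}$ — and Lemma~\ref{L1} converts each such piece to $b^{m_{1}'}$ times its twisted form; Lemma~\ref{L0} handles the base/degenerate terms where the hypergeometric reduction of Lemma~\ref{L1} collapses.

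The main obstacle will be the bookkeeping in the previous paragraph: matching the parameters so that the truncated hockey-stick expansion lands \emph{exactly} on instances of Lemma~\ref{L1}. The constraint $p=\tfrac{m_{1}+m_{2}+1}{2}$ must be used to identify $\binom{p}{k}$ with $\binom{(m_{1}+m_{2}+1)/2}{k}$, and since Lemma~\ref{L1} features $\binom{(m_{1}+m_{2})/2}{k}$, I expect to need a Pascal-type split $\binom{p}{k}=\binom{(m_{1}+m_{2}+1)/2}{k}$ together with the relation to $\binom{(m_{1}+m_{2})/2}{k}$ and $\binom{(m_{1}+m_{2})/2}{k-1}$; tracking how the extra $\tfrac12$ propagates through the double sum, and confirming the powers of $b$ balance as $b^{m_{1}}$ overall (rather than $b^{m_{1}}$ times a spurious power of $b$ coming from the $(1+b)^{-l}$ versus $(1+b)^{-l}b^{l}$ asymmetry), is where I anticipate all the real work. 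A cleaner alternative I would try first: prove the stronger coefficient-wise claim that for each fixed $k$,
\begin{equation*}
\sum_{l=0}^{m_{1}-k}\binom{m_{2}+l}{m_{2}}(1+b)^{-l}
= b^{\,m_{1}-k-?}\sum_{l=0}^{m_{1}-k}\binom{m_{2}+l}{m_{2}}(1+b)^{-l}b^{l},
\end{equation*}
which — if it holds with the right exponent — would make Lemma~\ref{L2} fall out termwise without invoking Lemmas~\ref{L0} or~\ref{L1} at all; checking small cases will tell me quickly whether this termwise strengthening is true or whether the cancellation genuinely happens only after summing over $k$, in which case the hypergeometric route via Lemma~\ref{L1} is unavoidable.
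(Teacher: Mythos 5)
Your ``cleaner alternative'' (the termwise strengthening) is false, so it cannot be the way in: matching the coefficient of $\binom{p}{k}(1-b)^{k}$ on the two sides would require $(-1)^{k}\sum_{l=0}^{m_{1}-k}\binom{m_{2}+l}{m_{2}}(1+b)^{-l}=b^{m_{1}-k}\sum_{l=0}^{m_{1}-k}\binom{m_{2}+l}{m_{2}}(1+b)^{-l}b^{l}$, which fails for odd $k$ already by sign when $b>0$, and fails for even $k$ as well: with $m_{2}=0$, $m_{1}-k=1$, $b=2$ the left side is $4/3$ while the right side is $10/3$. So the cancellation genuinely happens only after summing over $k$ (indeed jointly over $k$ and $l$), and your fallback route has to carry the whole weight.

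That fallback route is where the real gap lies: it is a sketch whose key step is not done and whose decomposition does not decouple the way you hope. The truncated-series formula you invoke is left with placeholders, and once you write $\sum_{l=0}^{m_{1}-k}\binom{m_{2}+l}{m_{2}}t^{l}$ as $(1-t)^{-(m_{2}+1)}$ minus a tail (with $t=(1+b)^{-1}$ on the left, $t=b/(1+b)$ on the right), the ``main'' pieces of the two sides do not balance by themselves: they would require $\sum_{k=0}^{m_{1}}(-1)^{k}\binom{p}{k}(1-b)^{k}=b^{m_{1}+m_{2}+1}\sum_{k=0}^{m_{1}}\binom{p}{k}(1-b)^{k}b^{-k}$, i.e.\ an analogue of Lemma \ref{L0} truncated at $m_{1}$ instead of at $2p=m_{1}+m_{2}+1$, which is false in general (take $m_{1}=0$, $m_{2}=1$: left side $1$, right side $b^{2}$). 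Hence everything hinges on cancellation between the main pieces and the tail pieces, which is exactly the bookkeeping you defer, and you give no evidence that it lands on instances of Lemma \ref{L1}. The paper avoids evaluating the inner sum altogether: it fixes $m_{1}+m_{2}$ (hence $p$), proves recursions expressing $L_{m_{1}\pm 1,m_{2}\mp 1}$ and $R_{m_{1}\pm 1,m_{2}\mp 1}$ in terms of $L_{m_{1},m_{2}}$ and $R_{m_{1},m_{2}}$, starts the induction at $m_{2}=-1$, where the identity is precisely Lemma \ref{L0}, and then shows the inhomogeneous terms of the two recursions agree by Lemma \ref{L1} with $y=b^{2}$ (after the substitutions indicated there). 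To salvage your approach you would need an explicit closed form for the tail and a verified identification of the combined main-plus-tail expressions with Lemma \ref{L1}; as written, that step is missing, so the proposal does not yet prove the lemma.
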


\begin{remark}
The lemma actually holds if $m_{1}$ is a negative integer if one interprets
both sides of the equality to be $0$ in that case.
\end{remark}

\begin{proof}
We denote the left and right hand sides in Lemma \ref{L2} by%
\begin{eqnarray*}
L_{m_{1},m_{2}}(b) &=&\tsum\limits_{k=0}^{m_{1}}\tsum\limits_{l=0}^{m_{1}-k}%
\tbinom{m_{2}+l}{l}\tbinom{p}{k}(-1)^{k}(1-b)^{k}(1+b)^{-l}, \\
R_{m_{1},m_{2}}(b)
&=&b^{m_{1}}\tsum\limits_{k=0}^{m_{1}}\tsum\limits_{l=0}^{m_{1}-k}\tbinom{%
m_{2}+l}{l}\tbinom{p}{k}(1-b)^{k}(1+b)^{-l}b^{l-k},
\end{eqnarray*}%
respectively. Here we used the identity $\tbinom{m_{2}+l}{m_{2}}=\tbinom{%
m_{2}+l}{l}$. Thus, to prove Lemma \ref{L2}, it suffices to prove 
\begin{equation}
L_{m_{1},m_{2}}(b)=R_{m_{1},m_{2}}(b).  \label{id}
\end{equation}

We shall first prove the recursion 
\begin{gather}
L_{m_{1}+1,m_{2}-1}(b)=  \label{lid} \\
\tfrac{b}{b+1}L_{m_{1},m_{2}}(b)+\tsum\limits_{k=0}^{m_{1}+1}\tbinom{%
m_{2}+m_{1}+1-k}{m_{2}}\tbinom{p}{k}(-1)^{k}(1-b)^{k}(1+b)^{-m_{1}-1+k} 
\notag
\end{gather}%
Throughout we use the identities $\binom{n+1}{k}=\binom{n}{k}+\binom{n}{k-1}$%
, which, in particular, implies that $\binom{m_{2}+l-1}{l}=\binom{m_{2}+l}{l}%
-\binom{m_{2}+l-1}{l-1}$. In addition we use the fact that $\binom{n}{-1}=0$
for any whole number, $n$. Now $L_{m_{1}+1,m_{2}-1}(b)=\sum%
\limits_{k=0}^{m_{1}+1}\sum\limits_{l=0}^{m_{1}+1-k}\binom{m_{2}+l-1}{l}%
\tbinom{p}{k}(-1)^{k}(1-b)^{k}(1+b)^{-l}=$

$\binom{p}{m_{1}+1}(-1)^{m_{1}+1}(1-b)^{m_{1}+1}+\sum\limits_{k=0}^{m_{1}}%
\sum\limits_{l=0}^{m_{1}+1-k}\binom{m_{2}+l-1}{l}\tbinom{p}{k}%
(-1)^{k}(1-b)^{k}(1+b)^{-l}=$

$\binom{p}{m_{1}+1}(-1)^{m_{1}+1}(1-b)^{m_{1}+1}+\sum\limits_{k=0}^{m_{1}}%
\sum\limits_{l=0}^{m_{1}+1-k}\binom{m_{2}+l}{l}\tbinom{p}{k}%
(-1)^{k}(1-b)^{k}(1+b)^{-l}-\sum\limits_{k=0}^{m_{1}}\sum%
\limits_{l=0}^{m_{1}+1-k}\binom{m_{2}+l-1}{l-1}\tbinom{p}{k}%
(-1)^{k}(1-b)^{k}(1+b)^{-l}=$

$\binom{p}{m_{1}+1}(-1)^{m_{1}+1}(1-b)^{m_{1}+1}+\sum\limits_{k=0}^{m_{1}}%
\binom{m_{2}+m_{1}+1-k}{m_{2}}\binom{p}{k}%
(-1)^{k}(1-b)^{k}(1+b)^{-m_{1}-1+k}+$

$\sum\limits_{k=0}^{m_{1}}\sum\limits_{l=0}^{m_{1}-k}\binom{m_{2}+l}{l}%
\tbinom{p}{k}(-1)^{k}(1-b)^{k}(1+b)^{-l}-\sum\limits_{k=0}^{m_{1}}\binom{%
m_{2}+m_{1}-k}{m_{2}}\binom{p}{k}(-1)^{k}(1-b)^{k}(1+b)^{-m_{1}-1+k}-$

$\sum\limits_{k=0}^{m_{1}}\sum\limits_{l=0}^{m_{1}-k}\binom{m_{2}+l-1}{l-1}%
\tbinom{p}{k}(-1)^{k}(1-b)^{k}(1+b)^{-l}=\sum\limits_{k=0}^{m_{1}+1}\binom{%
m_{2}+m_{1}-k}{m_{2}-1}\binom{p}{k}(-1)^{k}(1-b)^{k}(1+b)^{-m_{1}-1+k}+$

$\sum\limits_{k=0}^{m_{1}}\sum\limits_{l=0}^{m_{1}-k}\binom{m_{2}+l}{l}%
\tbinom{p}{k}(-1)^{k}(1-b)^{k}(1+b)^{-l}-\sum\limits_{k=0}^{m_{1}}\sum%
\limits_{l=-1}^{m_{1}-k}\binom{m_{2}+l}{l}\tbinom{p}{k}%
(-1)^{k}(1-b)^{k}(1+b)^{-l-1}=$

$\sum\limits_{k=0}^{m_{1}+1}\binom{m_{2}+m_{1}-k}{m_{2}-1}\binom{p}{k}%
(-1)^{k}(1-b)^{k}(1+b)^{-m_{1}-1+k}+\sum\limits_{k=0}^{m_{1}}\sum%
\limits_{l=0}^{m_{1}-k}\binom{m_{2}+l}{l}\tbinom{p}{k}%
(-1)^{k}(1-b)^{k}(1+b)^{-l}-$

$(1+b)^{-1}\sum\limits_{k=0}^{m_{1}}\sum\limits_{l=0}^{m_{1}-k}\binom{m_{2}+l%
}{l}\tbinom{p}{k}(-1)^{k}(1-b)^{k}(1+b)^{-l}=\sum\limits_{k=0}^{m_{1}+1}%
\binom{m_{2}+m_{1}-k}{m_{2}-1}\binom{p}{k}%
(-1)^{k}(1-b)^{k}(1+b)^{-m_{1}-1+k}+L_{m_{1},m_{2}}(b)-(1+b)^{-1}L_{m_{1},m_{2}}(b)= 
$

$\tfrac{b}{b+1}L_{m_{1},m_{2}}(b)+\sum\limits_{k=0}^{m_{1}+1}\binom{%
m_{2}+m_{1}+1-k}{m_{2}}\binom{p}{k}(-1)^{k}(1-b)^{k}(1+b)^{-m_{1}-1+k}$.
That proves (\ref{lid}). It is easy to show that 
\begin{equation}
b^{m_{1}}L_{m_{1},m_{2}}\left( \tfrac{1}{b}\right) =R_{m_{1},m_{2}}(b).
\label{lrid}
\end{equation}%
Thus by (\ref{lrid}) and (\ref{lid}), $%
R_{m_{1}+1,m_{2}-1}(b)=b^{m_{1}+1}L_{m_{1}+1,m_{2}-1}\left( \tfrac{1}{b}%
\right) =$

$b^{m_{1}+1}\left( \tfrac{1}{b+1}L_{m_{1},m_{2}}\left( \tfrac{1}{b}\right)
+\sum\limits_{k=0}^{m_{1}+1}\binom{m_{2}+m_{1}+1-k}{m_{2}}\binom{p}{k}%
(-1)^{k}(1-1/b)^{k}(1+1/b)^{-m_{1}-1+k}\right) =$

$b^{m_{1}+1}\left( \tfrac{1}{b+1}L_{m_{1},m_{2}}\left( \tfrac{1}{b}\right)
+\sum\limits_{k=0}^{m_{1}+1}\binom{m_{2}+m_{1}+1-k}{m_{2}}\binom{p}{k}%
(1-b)^{k}(1+b)^{-m_{1}-1+k}b^{m_{1}+1-2k}\right) =$

$\left( \tfrac{b^{m_{1}+1}}{b+1}L_{m_{1},m_{2}}\left( \tfrac{1}{b}\right)
+\sum\limits_{k=0}^{m_{1}+1}\binom{m_{2}+m_{1}+1-k}{m_{2}}\binom{p}{k}%
(1-b)^{k}(1+b)^{-m_{1}-1+k}b^{2m_{1}+2-2k}\right) =$

$\tfrac{b}{b+1}R_{m_{1},m_{2}}(b)+\sum\limits_{k=0}^{m_{1}+1}\binom{%
m_{2}+m_{1}+1-k}{m_{2}}\binom{p}{k}%
(1-b)^{k}(1+b)^{-m_{1}-1+k}b^{2m_{1}+2-2k} $. That yields the recursion 
\begin{gather}
R_{m_{1}+1,m_{2}-1}(b)=  \label{rid} \\
\tfrac{b}{b+1}R_{m_{1},m_{2}}(b)+\tsum\limits_{k=0}^{m_{1}+1}\tbinom{%
m_{2}+m_{1}+1-k}{m_{2}}\tbinom{p}{k}%
(1-b)^{k}(1+b)^{-m_{1}-1+k}b^{2m_{1}+2-2k}.  \notag
\end{gather}

In a similar fashion, one can also prove the recursions%
\begin{gather}
L_{m_{1}-1,m_{2}+1}(b)=\tfrac{b+1}{b}L_{m_{1},m_{2}}(b)-\tfrac{b+1}{b}%
(-1)^{m_{1}}\tbinom{p}{m_{1}}(1-b)^{m_{1}}  \label{lid2} \\
-\tfrac{b+1}{b}\tsum\limits_{k=0}^{m_{1}-1}(-1)^{k}\tbinom{m_{1}+m_{2}-k+1}{%
m_{2}+1}\tbinom{p}{k}(1-b)^{k}(1+b)^{-(m_{1}-k)}  \notag
\end{gather}

and%
\begin{gather}
R_{m_{1}-1,m_{2}+1}(b)=\tfrac{b+1}{b}R_{m_{1},m_{2}}(b)-\tfrac{b+1}{b}%
(-1)^{m_{1}}\tbinom{p}{m_{1}}(b-1)^{m_{1}}  \label{rid2} \\
-\tfrac{b+1}{b}\tsum\limits_{k=0}^{m_{1}-1}(-1)^{k}\tbinom{m_{1}+m_{2}-k+1}{%
m_{2}+1}\tbinom{p}{k}(b-1)^{k}(1+b)^{-m_{1}+k}b^{2m_{1}-2k}  \notag
\end{gather}%
We now use induction to prove (\ref{id}). We start the induction with $%
m_{2}=-1$ and $m_{1}$ any fixed non--negative integer. In that case the only
nonzero term on both sides of (\ref{id}) occurs when $l=0$, which yields $%
\tsum\limits_{k=0}^{m_{1}}(-1)^{k}\tbinom{m_{1}/2}{k}(1-b)^{k}=b^{m_{1}}%
\tsum\limits_{k=0}^{m_{1}}\tbinom{m_{1}/2}{k}(1-b)^{k}b^{-k}$, which is
precisely Lemma \ref{L0}. Proceeding with the induction, we assume now that $%
L_{m_{1},m_{2}}(b)=R_{m_{1},m_{2}}(b)$. Then, using (\ref{lid}) and (\ref%
{rid}), it follows that $L_{m_{1}+1,m_{2}-1}(b)=R_{m_{1}+1,m_{2}-1}(b)\iff
\sum\limits_{k=0}^{m_{1}+1}\binom{m_{2}+m_{1}+1-k}{m_{2}}\binom{p}{k}%
(-1)^{k}(1-b)^{k}(1+b)^{-m_{1}-1+k}=$

$\sum\limits_{k=0}^{m_{1}+1}\binom{m_{2}+m_{1}+1-k}{m_{2}}\binom{p}{k}%
(1-b)^{k}(1+b)^{-m_{1}-1+k}b^{2m_{1}+2-2k}\iff $%
\begin{gather}
\tsum\limits_{k=0}^{m_{1}+1}\tbinom{m_{2}+m_{1}+1-k}{m_{2}}\tbinom{p}{k}%
(-1)^{k}(1-b^{2})^{k}=  \label{1} \\
\left( b^{2}\right) ^{m_{1}+1}\tsum\limits_{k=0}^{m_{1}+1}\tbinom{%
m_{2}+m_{1}+1-k}{m_{2}}\tbinom{p}{k}(1-b^{2})^{k}\left( b^{2}\right) ^{-k} 
\notag
\end{gather}%
(\ref{1}) now follows from Lemma \ref{L1}, by replacing $m_{1}+1$ by $m_{1}$
and letting $y=b^{2}$. That proves, with the assumption $%
L_{m_{1},m_{2}}(b)=R_{m_{1},m_{2}}(b)$, that

\begin{equation}
L_{m_{1}+1,m_{2}-1}(b)=R_{m_{1}+1,m_{2}-1}(b).  \label{plus1}
\end{equation}

Also, using (\ref{lid2}) and (\ref{rid2}), it follows that $%
L_{m_{1}-1,m_{2}+1}(b)=R_{m_{1}-1,m_{2}+1}(b)\iff (-1)^{m_{1}}\tbinom{p}{%
m_{1}}(1-b)^{m_{1}}+\tsum\limits_{k=0}^{m_{1}-1}(-1)^{k}\tbinom{%
m_{1}+m_{2}-k+1}{m_{2}+1}\tbinom{p}{k}(1-b)^{k}(1+b)^{-(m_{1}-k)}=$

$(-1)^{m_{1}}\tbinom{p}{m_{1}}(b-1)^{m_{1}}+\tsum%
\limits_{k=0}^{m_{1}-1}(-1)^{k}\tbinom{m_{1}+m_{2}-k+1}{m_{2}+1}\tbinom{p}{k}%
(b-1)^{k}(1+b)^{-m_{1}+k}b^{2m_{1}-2k}$. To prove this equality we consider
two cases.

\textbf{Case 1:} $m_{1}$ is even

We must show that $\tsum\limits_{k=0}^{m_{1}-1}(-1)^{k}\tbinom{%
m_{1}+m_{2}-k+1}{m_{2}+1}\tbinom{p}{k}(1-b^{2})^{k}=$

$b^{2m_{1}}\tsum\limits_{k=0}^{m_{1}-1}\tbinom{m_{1}+m_{2}-k+1}{m_{2}+1}%
\tbinom{p}{k}\left( \tfrac{1-b^{2}}{b^{2}}\right) ^{k}$, which is equivalent
to

$\tsum\limits_{k=0}^{m_{1}-1}(-1)^{k}\tbinom{m_{1}+m_{2}-k}{m_{2}}\tbinom{%
(m_{2}+m_{1})/2}{k}(1-y)^{k}=y^{m_{1}}\tsum\limits_{k=0}^{m_{1}-1}\tbinom{%
m_{1}+m_{2}-k}{m_{2}}\tbinom{(m_{2}+m_{1})/2}{k}\left( \tfrac{1-y}{y}\right)
^{k}$ upon replacing $m_{2}$ by $m_{2}-1$ and letting $y=b^{2}$. Hence we
must prove that

$\tsum\limits_{k=0}^{m_{1}}(-1)^{k}\tbinom{m_{1}+m_{2}-k}{m_{2}}\tbinom{%
(m_{2}+m_{1})/2}{k}(1-y)^{k}-(-1)^{m_{1}}\tbinom{(m_{2}+m_{1})/2}{m_{1}}%
(1-y)^{m_{1}}=$

$y^{m_{1}}\tsum\limits_{k=0}^{m_{1}}\tbinom{m_{1}+m_{2}-k}{m_{2}}\tbinom{%
(m_{2}+m_{1})/2}{k}\left( \tfrac{1-y}{y}\right) ^{k}-y^{m_{1}}\tbinom{%
(m_{2}+m_{1})/2}{m_{1}}\left( \tfrac{1-y}{y}\right) ^{m_{1}}\iff $

$\tsum\limits_{k=0}^{m_{1}}(-1)^{k}\tbinom{m_{1}+m_{2}-k}{m_{2}}\tbinom{%
(m_{2}+m_{1})/2}{k}(1-y)^{k}=y^{m_{1}}\tsum\limits_{k=0}^{m_{1}}\tbinom{%
m_{1}+m_{2}-k}{m_{2}}\tbinom{(m_{2}+m_{1})/2}{k}\left( \tfrac{1-y}{y}\right)
^{k}$, which follows from Lemma \ref{L1}.

\textbf{Case 2:} $m_{1}$ is odd

We must show that $-\tbinom{p}{m_{1}}(1-b)^{m_{1}}+(1+b)^{-m_{1}}\tsum%
\limits_{k=0}^{m_{1}-1}(-1)^{k}\tbinom{m_{1}+m_{2}-k+1}{m_{2}+1}\tbinom{p}{k}%
(1-b^{2})^{k}=$

$\tbinom{p}{m_{1}}(1-b)^{m_{1}}+(1+b)^{-m_{1}}b^{2m_{1}}\tsum%
\limits_{k=0}^{m_{1}-1}\tbinom{m_{1}+m_{2}-k+1}{m_{2}+1}\tbinom{p}{k}\left( 
\tfrac{1-b^{2}}{b^{2}}\right) ^{k}\iff $

$-\tbinom{p}{m_{1}}(1-b^{2})^{m_{1}}+\tsum\limits_{k=0}^{m_{1}-1}(-1)^{k}%
\tbinom{m_{1}+m_{2}-k+1}{m_{2}+1}\tbinom{p}{k}(1-b^{2})^{k}=$

$\tbinom{p}{m_{1}}(1-b^{2})^{m_{1}}+b^{2m_{1}}\tsum\limits_{k=0}^{m_{1}-1}%
\tbinom{m_{1}+m_{2}-k+1}{m_{2}+1}\tbinom{p}{k}\left( \tfrac{1-b^{2}}{b^{2}}%
\right) ^{k}$. Replace $m_{2}$ by $m_{2}-1$ and let $y=b^{2}$ to get $%
\tbinom{(m_{2}+m_{1})/2}{m_{1}}(1-y)^{m_{1}}+\tsum%
\limits_{k=0}^{m_{1}-1}(-1)^{k}\tbinom{m_{1}+m_{2}-k}{m_{2}}\tbinom{%
(m_{2}+m_{1})/2}{k}(1-y)^{k}=$

$\tbinom{(m_{2}+m_{1})/2}{m_{1}}(1-y)^{m_{1}}+y^{m_{1}}\tsum%
\limits_{k=0}^{m_{1}-1}\tbinom{m_{1}+m_{2}-k}{m_{2}}\tbinom{(m_{2}+m_{1})/2}{%
k}\left( \tfrac{1-y}{y}\right) ^{k}\iff $

$-\tbinom{(m_{2}+m_{1})/2}{m_{1}}(1-y)^{m_{1}}+\tsum%
\limits_{k=0}^{m_{1}}(-1)^{k}\tbinom{m_{1}+m_{2}-k}{m_{2}}\tbinom{%
(m_{2}+m_{1})/2}{k}(1-y)^{k}+\tbinom{(m_{2}+m_{1})/2}{m_{1}}(1-y)^{m_{1}}=$

$\tbinom{(m_{2}+m_{1})/2}{m_{1}}(1-y)^{m_{1}}+y^{m_{1}}\tsum%
\limits_{k=0}^{m_{1}}\tbinom{m_{1}+m_{2}-k}{m_{2}}\tbinom{(m_{2}+m_{1})/2}{k}%
\left( \tfrac{1-y}{y}\right) ^{k}-y^{m_{1}}\tbinom{(m_{2}+m_{1})/2}{m_{1}}(%
\tfrac{1-y}{y})^{m_{1}}\iff \tsum\limits_{k=0}^{m_{1}}(-1)^{k}\tbinom{%
m_{1}+m_{2}-k}{m_{2}}\tbinom{(m_{2}+m_{1})/2}{k}(1-y)^{k}=y^{m_{1}}\tsum%
\limits_{k=0}^{m_{1}}\tbinom{m_{1}+m_{2}-k}{m_{2}}\tbinom{(m_{2}+m_{1})/2}{k}%
\left( \tfrac{1-y}{y}\right) ^{k}$, which is again Lemma \ref{L1}. That
proves, with the assumption $L_{m_{1},m_{2}}(b)=R_{m_{1},m_{2}}(b)$, that 
\begin{equation}
L_{m_{1}-1,m_{2}+1}(b)=R_{m_{1}-1,m_{2}+1}(b).  \label{minus1}
\end{equation}%
The case $m_{2}=-1$ and (\ref{minus1}) shows that (\ref{id}) holds when $%
m_{1}=m_{2}$ and $m_{1}$ is any non--negative integer, or when $%
m_{1}=m_{2}+1 $ and $m_{1}$ is any non--negative integer. (\ref{plus1}) now
shows that (\ref{id}) holds when $m_{1}\geq 0$ and $m_{2}$ are any integers.
That finishes the proof of Lemma \ref{L2}.
\end{proof}

We are now ready to prove Theorem \ref{T5}.

\begin{proof}
We now use a formula due to Spitzbart (see \cite{s}, Theorem 2), which
expresses divided differences of the form $%
f[x_{0}^{r_{0}+1},x_{1}^{r_{1}+1},...,x_{n}^{r_{n}+1}]$ with confluent
arguments as a linear combination of the values of $f$ and its derivatives
at $x_{0},x_{1},...,x_{n}$. Using $%
f(x)=x^{p},x_{0}=x,x_{1}=a,x_{2}=b,r_{0}=0,r_{1}=m_{1},$ and $r_{2}=m_{2}$,
one can write 
\begin{equation}
f[x,a^{m_{1}+1},b^{m_{2}+1}]=A_{1}+B_{1}+C_{1},  \label{2}
\end{equation}%
where $A_{1}=\sum\limits_{k=0}^{m_{1}}\sum\limits_{l=0}^{m_{1}-k}\binom{%
-m_{2}-1}{l}\binom{-1}{m_{1}-k-l}\binom{p}{k}%
(a-b)^{-m_{2}-1-l}(a-x)^{-m_{1}-1+k+l}a^{p-k},$ $B_{1}=\sum%
\limits_{k=0}^{m_{2}}\sum\limits_{l=0}^{m_{2}-k}\binom{-m_{1}-1}{l}\binom{-1%
}{m_{2}-k-l}\binom{p}{k}(b-a)^{-m_{1}-1-l}(b-x)^{-m_{2}-1+k+l}b^{p-k}$, and 
\begin{equation}
C_{1}=(x-a)^{-m_{1}-1}(x-b)^{-m_{2}-1}x^{p}.  \label{c1}
\end{equation}%
Using the identities $\binom{-1}{m_{j}-k-l}=(-1)^{m_{j}}(-1)^{k+l}$ and $%
\binom{-m_{j}-1}{l}=(-1)^{l}\binom{m_{j}+l}{m_{j}},j=1,2$ to simplify the
expressions for $A_{1}$ and $B_{1}$ yields

\begin{gather}
A_{1}=a^{p}\tsum\limits_{k=0}^{m_{1}}\tsum\limits_{l=0}^{m_{1}-k}\tbinom{%
m_{2}+l}{m_{2}}\tbinom{p}{k}%
(-1)^{m_{1}+k}(a-b)^{-m_{2}-1-l}(a-x)^{-m_{1}-1+k+l}a^{-k},  \label{a1b1} \\
B_{1}=b^{p}\tsum\limits_{k=0}^{m_{2}}\tsum\limits_{l=0}^{m_{2}-k}\tbinom{%
m_{1}+l}{m_{1}}\tbinom{p}{k}%
(-1)^{m_{2}+k}(b-a)^{-m_{1}-1-l}(b-x)^{-m_{2}-1+k+l}b^{-k}  \notag
\end{gather}

By switching $m_{1}$ and $m_{2}$ we obtain 
\begin{equation}
f[x,a^{m_{2}+1},b^{m_{1}+1}]=A_{2}+B_{2}+C_{2},  \label{3}
\end{equation}%
\ where 
\begin{eqnarray}
A_{2} &=&a^{p}\tsum\limits_{k=0}^{m_{2}}\tsum\limits_{l=0}^{m_{2}-k}\tbinom{%
m_{1}+l}{m_{1}}\tbinom{p}{k}%
(-1)^{m_{2}+k}(a-b)^{-m_{1}-1-l}(a-x)^{-m_{2}-1+k+l}a^{-k},  \notag \\
&&  \label{a2b2c2} \\
B_{2} &=&b^{p}\tsum\limits_{k=0}^{m_{1}}\tsum\limits_{l=0}^{m_{1}-k}\tbinom{%
m_{2}+l}{m_{2}}\tbinom{p}{k}%
(-1)^{m_{1}+k}(b-a)^{-m_{2}-1-l}(b-x)^{-m_{1}-1+k+l}b^{-k},  \notag \\
C_{2} &=&(x-a)^{-m_{2}-1}(x-b)^{-m_{1}-1}x^{p}.  \notag
\end{eqnarray}%
Now letting $x=1$ and $a=\tfrac{1}{b}$ in (\ref{2}) and (\ref{3}) yields%
\begin{eqnarray}
b^{m_{2}}\left( A_{1}+B_{1}+C_{1}\right)
&=&b^{m_{2}}f[1,(1/b)^{m_{1}+1},b^{m_{2}+1}]  \label{4} \\
b^{m_{1}}\left( A_{2}+B_{2}+C_{2}\right)
&=&b^{m_{1}}f[1,(1/b)^{m_{2}+1},b^{m_{1}+1}],  \notag
\end{eqnarray}%
After some simplification, we have $%
b^{m_{2}}A_{1}=(-1)^{m_{1}}b^{m_{1}+2m_{2}+2-p}(1-b)^{-m_{1}-m_{2}-2}(1+b)^{-m_{2}-1}\sum\limits_{k=0}^{m_{1}}\sum\limits_{l=0}^{m_{1}-k}%
\binom{m_{2}+l}{m_{2}}(-1)^{k}\binom{p}{k}(1-b)^{k}(1+b)^{-l},$

$%
b^{m_{2}}B_{1}=(-1)^{m_{1}}b^{m_{1}+m_{2}+p+1}(1-b)^{-m_{1}-m_{2}-2}(1+b)^{-m_{1}-1}\sum\limits_{k=0}^{m_{2}}\sum\limits_{l=0}^{m_{2}-k}%
\binom{m_{1}+l}{m_{1}}\binom{p}{k}(1-b)^{k}(1+b)^{-l}b^{l-k},$

$%
b^{m_{2}}C_{1}=(-1)^{m_{1}+1}(1-b)^{-m_{1}-m_{2}-2}b^{m_{1}+m_{2}+1},b^{m_{1}}A_{2}=(-1)^{m_{2}}b^{m_{2}+2m_{1}+2-p}(1-b)^{-m_{1}-m_{2}-2}(1+b)^{-m_{1}-1}\sum\limits_{k=0}^{m_{2}}\sum\limits_{l=0}^{m_{2}-k}%
\binom{m_{1}+l}{m_{1}}(-1)^{k}\binom{p}{k}%
(1-b)^{k}(1+b)^{-l},b^{m_{1}}B_{2}=(-1)^{m_{2}}b^{m_{1}+m_{2}+p+1}(1-b)^{-m_{1}-m_{2}-2}(1+b)^{-m_{2}-1}\sum\limits_{k=0}^{m_{1}}\sum\limits_{l=0}^{m_{1}-k}%
\binom{m_{2}+l}{m_{2}}\binom{p}{k}(1-b)^{k}(1+b)^{-l}b^{l-k},$ and $%
b^{m_{1}}C_{2}=(-1)^{m_{2}+1}(1-b)^{-m_{1}-m_{2}-2}b^{m_{1}+m_{2}+1}$. We
claim:%
\begin{equation}
b^{m_{2}}A_{1}=b^{m_{1}}B_{2},b^{m_{1}}A_{2}=b^{m_{2}}B_{1},b^{m_{2}}C_{1}=b^{m_{1}}C_{2},b\neq \pm 1.
\label{5}
\end{equation}%
It is trivial that $b^{m_{2}}C_{1}=b^{m_{1}}C_{2}$. Now $%
b^{m_{2}}A_{1}=b^{m_{1}}B_{2}\iff $

$(-1)^{m_{1}}b^{m_{1}+2m_{2}+2-p}(1-b)^{-m_{1}-m_{2}-2}(1+b)^{-m_{2}-1}\sum%
\limits_{k=0}^{m_{1}}\sum\limits_{l=0}^{m_{1}-k}\binom{m_{2}+l}{m_{2}}%
(-1)^{k}\binom{p}{k}(1-b)^{k}(1+b)^{-l}=$

$(-1)^{m_{2}}b^{m_{1}+m_{2}+p+1}(1-b)^{-m_{1}-m_{2}-2}(1+b)^{-m_{2}-1}\sum%
\limits_{k=0}^{m_{1}}\sum\limits_{l=0}^{m_{1}-k}\binom{m_{2}+l}{m_{2}}\binom{%
p}{k}(1-b)^{k}(1+b)^{-l}b^{l-k}\iff $%
\begin{equation}
\tsum\limits_{k=0}^{m_{1}}\tsum\limits_{l=0}^{m_{1}-k}\tbinom{m_{2}+l}{m_{2}}%
(-1)^{k}\tbinom{p}{k}(1-b)^{k}(1+b)^{-l}=b^{m_{1}}\tsum\limits_{k=0}^{m_{1}}%
\tsum\limits_{l=0}^{m_{1}-k}\tbinom{m_{2}+l}{m_{2}}\tbinom{p}{k}%
(1-b)^{k}(1+b)^{-l}b^{l-k},  \label{6}
\end{equation}%
and $b^{m_{1}}A_{2}=b^{m_{2}}B_{1}\iff $

$(-1)^{m_{2}}b^{m_{2}+2m_{1}+2-p}(1-b)^{-m_{1}-m_{2}-2}(1+b)^{-m_{1}-1}\sum%
\limits_{k=0}^{m_{2}}\sum\limits_{l=0}^{m_{2}-k}\binom{m_{1}+l}{m_{1}}%
(-1)^{k}\binom{p}{k}(1-b)^{k}(1+b)^{-l}=$

$(-1)^{m_{1}}b^{m_{1}+m_{2}+p+1}(1-b)^{-m_{1}-m_{2}-2}(1+b)^{-m_{1}-1}\sum%
\limits_{k=0}^{m_{2}}\sum\limits_{l=0}^{m_{2}-k}\binom{m_{1}+l}{m_{1}}\binom{%
p}{k}(1-b)^{k}(1+b)^{-l}b^{l-k}\iff $%
\begin{equation}
\tsum\limits_{k=0}^{m_{2}}\tsum\limits_{l=0}^{m_{2}-k}(-1)^{k}\tbinom{m_{1}+l%
}{m_{1}}\tbinom{p}{k}(1-b)^{k}(1+b)^{-l}=b^{m_{2}}\tsum\limits_{k=0}^{m_{2}}%
\tsum\limits_{l=0}^{m_{2}-k}\tbinom{m_{1}+l}{m_{1}}\tbinom{p}{k}%
(1-b)^{k}(1+b)^{-l}b^{l-k}.  \label{7}
\end{equation}%
(\ref{6}) is precisely Lemma \ref{L2}, and the proof of (\ref{7}) is very
similar to the proof of Lemma \ref{L2}. More simply, one can just
interchange $m_{1}$ and $m_{2}$ in Lemma \ref{L2}, since Lemma \ref{L2}
actually holds for all integers $m_{1}$ and $m_{2}$(see the remark following
Lemma \ref{L2}). That proves (\ref{5}), which immediately gives 
\begin{equation}
b^{m_{2}}\left( A_{1}+B_{1}+C_{1}\right) =b^{m_{1}}\left(
A_{2}+B_{2}+C_{2}\right) .  \label{8}
\end{equation}%
Now, if $f(x)=x^{(m_{1}+m_{2}+1)/2}$, then $M_{p,m_{1},m_{2}}$ is a
homogeneous mean. Thus it suffices to prove that $M_{p,m_{1},m_{2}}\left( 
\tfrac{1}{b},b\right) =1,b\neq 1,b\geq 0$, which is equivalent to $\left( 1-%
\tfrac{1}{b}\right) ^{m}f[1,(1/b)^{m_{1}+1},b^{m_{2}+1}]=\left( 1-b\right)
^{m}f[1,(1/b)^{m_{2}+1},b^{m_{1}+1}]$ by (\ref{h1h2}) with $a=\dfrac{1}{b}$
and $x=1$. A little simplification yields $%
b^{m_{2}}f[1,(1/b)^{m_{1}+1},b^{m_{2}+1}]=b^{m_{1}}f[1,(1/b)^{m_{2}+1},b^{m_{1}+1}]
$, which follows directly from (\ref{8}) using (\ref{4}).
\end{proof}

\begin{remark}
There are various well known integral representations for divided
differences which might be used to give a shorter proof of Theorem \ref{T5}.
This author, however, was not able to make such a proof work.
\end{remark}

Before proving our next result, we need a theorem about Cauchy Mean Values,
which have been discussed by many authors. In particular, we use results
from the paper by Leach and Sholander \cite{ls}. Let $I$ be an open interval
of real numbers and consider two given functions $f,g\in C^{n}\left(
I\right) $. Suppose that $g^{(n)}(x)\neq 0$ for $x\in I$ and that $\phi $ is
monotone on $I$, where $\phi (x)=\tfrac{f^{(n)}(x)}{g^{(n)}(x)}$. Given $n+1$
numbers $\left\{ x_{0},x_{1},...,x_{n}\right\} \subseteq I$, there is a
unique $c,\min \left\{ x_{0},x_{1},...,x_{n}\right\} \leq c\leq \max \left\{
x_{0},x_{1},...,x_{n}\right\} $, such that $\tfrac{f[x_{0},x_{1},...,x_{n}]}{%
g[x_{0},x_{1},...,x_{n}]}=\tfrac{f^{(n)}(c)}{g^{(n)}(c)}$. Of course, if the 
$x_{0},x_{1},...,x_{n}$ are not distinct, we use the extended definition of
the divided difference $f[x_{0},x_{1},...,x_{n}]$ for confluent nodes. This
defines a mean $c=M_{f,g}(x_{0},x_{1},...,x_{n})$. We state the following
result of Leach and Sholander from (\cite{ls}, Theorem 3) with the notation
altered slightly for our purposes.

\begin{theorem}
\label{T6}If $\phi ^{\prime }(x)$ is never $0$ on $I$, then $\tfrac{\partial 
}{\partial x_{k}}M_{f,g}(x_{0},x_{1},...,x_{n})>0$ for $k=0,1,...,n$.
\end{theorem}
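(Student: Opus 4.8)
The plan is to recognize $M_{f,g}(x_0,x_1,\dots,x_n)$ as the value $c=\phi^{-1}\!\left(f[x_0,\dots,x_n]/g[x_0,\dots,x_n]\right)$, which is well defined because $\phi$ is strictly monotone on $I$ (its derivative never vanishes), and then differentiate this relation in $x_k$. Writing $F=f[x_0,\dots,x_n]$ and $G=g[x_0,\dots,x_n]$, the defining equation is $\phi(c)G=F$. Differentiating with respect to $x_k$ gives
\begin{equation*}
\phi'(c)\,\frac{\partial c}{\partial x_k}\,G + \phi(c)\,\frac{\partial G}{\partial x_k} = \frac{\partial F}{\partial x_k},
\end{equation*}
so that
\begin{equation*}
\frac{\partial c}{\partial x_k} = \frac{1}{\phi'(c)\,G}\left(\frac{\partial F}{\partial x_k} - \phi(c)\,\frac{\partial G}{\partial x_k}\right).
\end{equation*}
Thus everything reduces to computing $\partial F/\partial x_k$ and $\partial G/\partial x_k$ and identifying the sign of the bracketed combination, remembering that $\phi'(c)$ has a fixed sign on $I$ and $G=g[x_0,\dots,x_n]=g^{(n)}(\xi)/n!$ for some $\xi\in I$ also has a fixed sign (since $g^{(n)}$ never vanishes).

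The key computational fact I would use is the standard formula for the partial derivative of a divided difference with respect to a node: $\dfrac{\partial}{\partial x_k} f[x_0,\dots,x_n] = f[x_0,\dots,x_k,x_k,\dots,x_n]$, i.e.\ the divided difference with $x_k$ repeated (this is the same identity invoked in the proof of Theorem \ref{T1}, there written $\frac{d}{dx}f[x,\dots]=f[x,x,\dots]$, and it holds for confluent nodes too). Hence
\begin{equation*}
\frac{\partial F}{\partial x_k} - \phi(c)\,\frac{\partial G}{\partial x_k}
= f[x_0,\dots,x_k,x_k,\dots,x_n] - \phi(c)\,g[x_0,\dots,x_k,x_k,\dots,x_n].
\end{equation*}
This is a divided difference of order $n+1$ of the function $h = f - \phi(c)\,g$, taken at the $n+2$ nodes obtained by doubling $x_k$. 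Now $h^{(n)}(x) = f^{(n)}(x) - \phi(c)g^{(n)}(x) = g^{(n)}(x)\bigl(\phi(x)-\phi(c)\bigr)$, which vanishes exactly at $x=c$ and, since $\phi$ is strictly monotone and $g^{(n)}$ has constant sign, changes sign at $c$; equivalently $h^{(n+1)}(x) = \frac{d}{dx}\bigl(g^{(n)}(x)(\phi(x)-\phi(c))\bigr)$ has constant sign in a deleted neighbourhood — more usefully, one checks directly that $h^{(n+1)}$ does not vanish on $I$ away from where it must, by expanding and using $\phi'\neq 0$. The cleanest route: apply the Mean Value Theorem for divided differences to $h$ of order $n+1$, getting $h[x_0,\dots,x_k,x_k,\dots,x_n] = h^{(n+1)}(\eta)/(n+1)!$ for some $\eta\in I$, and then show $h^{(n+1)}$ has a single definite sign on $I$. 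Since $h^{(n+1)} = g^{(n+1)}(\phi-\phi(c)) + g^{(n)}\phi'$ and $\phi'$ never vanishes, one argues that near $c$ the term $g^{(n)}\phi'$ dominates and has the sign of $g^{(n)}\phi'$, and in fact this sign persists on all of $I$: this is precisely where the hypothesis "$\phi'$ is never $0$" (as opposed to merely "$\phi$ monotone", the running hypothesis before the theorem) is used, and it is the crux of the argument.

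Putting the signs together: $\partial c/\partial x_k = \dfrac{h^{(n+1)}(\eta)}{(n+1)!\,\phi'(c)\,G}$, and $G = g^{(n)}(\xi)/n!$. Writing $\operatorname{sgn} h^{(n+1)} = \operatorname{sgn}\bigl(g^{(n)}\phi'\bigr)$ on $I$ (the point established above), we get
\begin{equation*}
\operatorname{sgn}\frac{\partial c}{\partial x_k} = \operatorname{sgn}\frac{g^{(n)}(\eta)\phi'(\eta)}{\phi'(c)\,g^{(n)}(\xi)} = \operatorname{sgn}\frac{g^{(n)}(\eta)}{g^{(n)}(\xi)}\cdot\operatorname{sgn}\frac{\phi'(\eta)}{\phi'(c)} = (+1)(+1) = +1,
\end{equation*}
since $g^{(n)}$ has constant sign on $I$ and $\phi'$ has constant sign on $I$. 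Hence $\partial M_{f,g}/\partial x_k > 0$ for every $k$. I expect the main obstacle to be the rigorous justification that $h^{(n+1)}$ has constant sign on all of $I$ — one must rule out cancellation between $g^{(n+1)}(\phi-\phi(c))$ and $g^{(n)}\phi'$ — but since $h^{(n)}=g^{(n)}(\phi-\phi(c))$ is itself strictly monotone-in-sign (negative on one side of $c$, positive on the other, by strict monotonicity of $\phi$), its derivative $h^{(n+1)}$ cannot vanish without $h^{(n)}$ having an interior extremum, contradicting that $h^{(n)}$ is monotone through $c$; this is the observation that closes the gap. (Alternatively, one can cite Theorem 3 of \cite{ls} verbatim, since the statement here is exactly that result restated, and simply note that the computation above reproduces their proof.)
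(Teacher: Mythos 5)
The paper itself offers no proof of Theorem \ref{T6}: it is quoted, with notation adjusted, from Leach and Sholander (\cite{ls}, Theorem 3), so your parenthetical fallback of citing that result verbatim is exactly what the paper does. Your attempted direct proof, however, fails at precisely the step you call the crux. From $h^{(n)}(x)=g^{(n)}(x)\bigl(\phi(x)-\phi(c)\bigr)$ you argue that $h^{(n)}$ is ``monotone through $c$'' because it is negative on one side of $c$ and positive on the other, and hence that $h^{(n+1)}$ never vanishes and has the sign of $g^{(n)}\phi'$ on all of $I$. A single sign change does not imply monotonicity: away from $c$, $h^{(n)}$ can have interior local extrema wherever $g^{(n)}$ oscillates. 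Concretely, take $g^{(n)}(x)=2+\sin(10x)>0$ and $f^{(n)}(x)=x\,g^{(n)}(x)$ on $I=(0,2)$ (integrate $n$ times to get admissible $f,g$), so that $\phi(x)=x$ and $\phi'\equiv 1$; then $h^{(n+1)}(x)=\bigl(2+\sin(10x)\bigr)+10\cos(10x)(x-c)$ is positive at $x=c$ but negative at suitable points of $I$ (e.g.\ where $\cos(10x)=-1$ and $x-c$ is moderately large, or $\cos(10x)=1$ and $x-c$ is moderately negative), for every $c\in I$. So $h^{(n+1)}$ is not of one sign even though all hypotheses of the theorem hold, and the mean value theorem for divided differences at order $n+1$ only yields $h[\dots,x_k,x_k,\dots]=h^{(n+1)}(\eta)/(n+1)!$ at an uncontrolled $\eta$; the sign cannot be extracted this way.

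The missing idea is that the sign of the doubled-node divided difference of $h$ cannot follow from the single sign change of $h^{(n)}$ at $c$ alone; one must use the defining relation $h[x_0,x_1,\dots,x_n]=0$ (that is, $F=\phi(c)G$), which ties $c$ to the same node set, and your computation never invokes it. The Cauchy case $n=1$ shows what a correct sign argument looks like: $h[x_0,x_1,x_1]=\tfrac{h[x_1,x_1]-h[x_0,x_1]}{x_1-x_0}=\tfrac{g'(x_1)\bigl(\phi(x_1)-\phi(c)\bigr)}{x_1-x_0}>0$, which uses $h[x_0,x_1]=0$ and $c<x_1$, not any constant sign of $h''$. There is also a smaller technical problem: the standing hypotheses before the theorem only require $f,g\in C^{n}(I)$, so $g^{(n+1)}$ and $h^{(n+1)}$ need not exist and the order-$(n+1)$ mean value theorem is not even available. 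As written, your argument does not establish the theorem, and your closing claim that the computation ``reproduces'' the proof in \cite{ls} is unsupported; if you do not supply a corrected argument along the lines above, you should simply cite \cite{ls}, as the paper does.
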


\qquad Now we prove the following lemma.

\begin{lemma}
\label{L3}Let $I=(a,b),0<a<b$ be a given open interval, let $m_{2}<m_{1}$ be
two given nonnegative integers, with $n=m_{1}+m_{2}+1$, and suppose that $%
f,g\in C^{n+2}(I)$ with $f^{(n+1)}$ and $g^{(n+1)}$ nonzero on $I$. Assume
also that $g^{(n+1)}(x)$ and $\phi ^{\prime }(x)$ are never $0$ on $I$,
where $\phi (x)=\tfrac{f^{(n+1)}(x)}{g^{(n+1)}(x)}$. Let $\zeta _{P},\zeta
_{Q}\in I$ be the unique values satisfying $\tfrac{%
f[x,a^{m_{1}+1},b^{m_{2}+1}]}{g[x,a^{m_{1}+1},b^{m_{2}+1}]}=\tfrac{%
f^{(n+1)}(\zeta _{P})}{g^{(n+1)}(\zeta _{P})}$ and $\tfrac{%
f[x,a^{m_{2}+1},b^{m_{1}+1}]}{g[x,a^{m_{2}+1},b^{m_{1}+1}]}=\tfrac{%
f^{(n+1)}(\zeta _{Q})}{g^{(n+1)}(\zeta _{Q})}$. Then $\zeta _{P}<\zeta _{Q}$.
\end{lemma}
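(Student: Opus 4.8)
The plan is to identify $\zeta_P$ and $\zeta_Q$ as two values of the Cauchy mean $M_{f,g}$ introduced just before Theorem~\ref{T6}, evaluated at node multisets that differ only by replacing $m_1-m_2$ copies of the smaller endpoint $a$ with the larger endpoint $b$, and then to read off $\zeta_P<\zeta_Q$ from the strict monotonicity of $M_{f,g}$ in each variable.

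First I would check that the objects are well defined. Since $g^{(n+1)}$ is nonzero on $I$, the Mean Value Theorem for divided differences gives $g[x,a^{m_1+1},b^{m_2+1}]=g^{(n+1)}(\xi_1)/(n+1)!\neq0$ and $g[x,a^{m_2+1},b^{m_1+1}]=g^{(n+1)}(\xi_2)/(n+1)!\neq0$, so the two ratios in the statement make sense; and since $\phi'$ is never zero, $\phi$ is strictly monotone, so the equation $f[\cdots]/g[\cdots]=\phi(\zeta)$ has at most one solution in $I$. The Cauchy Mean Value result quoted in the text produces one, so, in the notation of that paragraph with the divided differences taken of order $n+1$ (there being $n+2$ nodes), we have
\[
\zeta_P=M_{f,g}\bigl(x,\underbrace{a,\dots,a}_{m_1+1},\underbrace{b,\dots,b}_{m_2+1}\bigr),\qquad
\zeta_Q=M_{f,g}\bigl(x,\underbrace{a,\dots,a}_{m_2+1},\underbrace{b,\dots,b}_{m_1+1}\bigr).
\]

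Next I would compare the node lists. Both contain $x$ once, $a$ with multiplicity $m_2+1$, and $b$ with multiplicity $m_2+1$; the first list carries $m_1-m_2\geq1$ additional copies of $a$, the second carries $m_1-m_2$ additional copies of $b$. Because $M_{f,g}$ is symmetric in its arguments (divided differences are), I can pass from the first list to the second by changing those ``extra'' entries from $a$ to $b$ one at a time. By Theorem~\ref{T6}, which applies since $\phi'$ is never zero on $I$, we have $\partial M_{f,g}/\partial x_k>0$ for each $k$, so for a single such move
\[
M_{f,g}(\dots,b,\dots)-M_{f,g}(\dots,a,\dots)=\int_a^b\frac{\partial}{\partial x_k}M_{f,g}(\dots,t,\dots)\,dt>0,
\]
using $a<b$; composing the $m_1-m_2$ moves gives $\zeta_P<\zeta_Q$.

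The one point that needs care, and which I expect to be the only obstacle, is that Theorem~\ref{T6} be applicable throughout this deformation: the moving node sweeps across $[a,b]$ and collides with fixed nodes (copies of $a$, of $b$, and possibly $x$). This is covered by the conventions already in force in the text --- divided differences, and hence $M_{f,g}$, are defined and behave smoothly for confluent nodes via the continuous extension, and the hypotheses implicitly place $f,g$ in $C^{n+2}$ on an open interval containing $[a,b]$ (the divided differences use derivative data at $a$ and $b$), on which $\phi'\neq0$, so Theorem~\ref{T6} is available along the whole path; continuity of $M_{f,g}$ then converts the chain of non-strict endpoint inequalities into the asserted strict inequality. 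I do not anticipate any essential difficulty beyond this bookkeeping.
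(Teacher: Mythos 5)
Your proposal is correct and follows essentially the same route as the paper: both identify $\zeta_P=M_{f,g}(x,a^{m_1+1},b^{m_2+1})$ and $\zeta_Q=M_{f,g}(x,a^{m_2+1},b^{m_1+1})$ as Cauchy means and conclude $\zeta_P<\zeta_Q$ from the Leach--Sholander monotonicity in Theorem \ref{T6}, since the node lists differ by replacing $m_1-m_2$ copies of $a$ with $b$. Your additional care about well-definedness and confluent nodes along the deformation is sound bookkeeping that the paper simply leaves implicit.
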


\begin{proof}
$\tfrac{f[x,a^{m_{1}+1},b^{m_{2}+1}]}{g[x,a^{m_{1}+1},b^{m_{2}+1}]}=\tfrac{%
f[x_{0},x_{1},...,x_{n}]}{g[x_{0},x_{1},...,x_{n}]}$ where $%
x_{0}=x,x_{1}=\cdots =x_{m_{1}+1}=a,$ and $x_{m_{1}+2}=\cdots
=x_{m_{1}+m_{2}+2}=b$, while $\tfrac{f[x,a^{m_{2}+1},b^{m_{1}+1}]}{%
g[x,a^{m_{2}+1},b^{m_{1}+1}]}=\tfrac{f[x_{0},x_{1},...,x_{n}]}{%
g[x_{0},x_{1},...,x_{n}]}$ where $x_{0}=x,x_{1}=\cdots =x_{m_{2}+1}=a,$ and $%
x_{m_{2}+2}=\cdots =x_{m_{1}+m_{2}+2}=b$. Then $\zeta _{P}=M_{f,g}\left(
x,a^{m_{1}+1},b^{m_{2}+1}\right) $ and $\zeta _{Q}=M_{f,g}\left(
x,a^{m_{2}+1},b^{m_{1}+1}\right) $, where $M_{f,g}$ denotes the mean defined
above. Since $m_{2}<m_{1}$ and $a<b$, by Theorem \ref{T6}, $\zeta _{P}<\zeta
_{Q}$.
\end{proof}

Recall that the means discussed in this paper are denoted by $%
M_{f,m_{1},m_{2}}(a,b)$, where $M_{f,m_{1},m_{2}}(a,b)$ is the unique
solution, in $(a,b)$, of the equation $E_{P}(x)=(-1)^{m_{1}-m_{2}}E_{Q}(x)$, 
$E_{P}(x)$ and $E_{Q}(x)$ given by (\ref{epq}). We now prove a result about
when $M_{f,m_{1},m_{2}}$ and $M_{g,m_{1},m_{2}}$ are comparable. For any
sufficiently smooth $f$, we let $P_{f}$ and $Q_{f}$ denote the Hermite
interpolants satisfying (\ref{PQ}). We also let $E_{P,f}=f-P_{f}$ and so on.

\begin{theorem}
\label{T7}Suppose that $\phi =\tfrac{f^{(n+1)}}{g^{(n+1)}}$ is strictly
monotonic on $(0,\infty )$, where $f,g\in C^{n+1}(0,\infty )$. Then the
means $M_{f,m_{1},m_{2}}$ and $M_{g,m_{1},m_{2}}$ are strictly comparable.
That is, either $M_{f,m_{1},m_{2}}(a,b)<M_{g,m_{1},m_{2}}(a,b)$ or $%
M_{f,m_{1},m_{2}}(a,b)>M_{g,m_{1},m_{2}}(a,b)$ for all $(a,b)\in \Re
_{2}^{+} $.
\end{theorem}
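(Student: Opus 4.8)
The goal is to compare $M_{f,m_1,m_2}(a,b)$ with $M_{g,m_1,m_2}(a,b)$ using the monotonicity of $\phi = f^{(n+1)}/g^{(n+1)}$, via the Cauchy mean machinery (Theorem \ref{T6}) and Lemma \ref{L3}. First I would recall that $M_{f,m_1,m_2}(a,b)$ is the unique zero in $(a,b)$ of
\[
F(x) = (x-a)^{m_1-m_2} f[x,a^{m_1+1},b^{m_2+1}] - (b-x)^{m_1-m_2} f[x,a^{m_2+1},b^{m_1+1}],
\]
and similarly $G(x)$ for $g$ with zero $x_g := M_{g,m_1,m_2}(a,b)$. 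The idea is to evaluate $F$ at $x_g$ and show it has a definite sign determined only by the direction of monotonicity of $\phi$; since we know from the proof of Theorem \ref{T1} (and its analog in Theorem \ref{T2}) that $(x-a)^{m_1-m_2}h_1(x)$ is increasing and $(b-x)^{m_1-m_2}h_2(x)$ is — up to the sign $(-1)^{m_1-m_2}$ folded into the definition — behaving monotonically the other way, $F$ is strictly increasing through its unique zero; hence $\mathrm{sign}\,F(x_g)$ tells us whether $x_0 = M_{f,m_1,m_2}(a,b)$ lies to the right or left of $x_g$, and the claim is that this sign is the same for \emph{every} pair $(a,b)$.

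**Key steps.** (1) Write $f[x,a^{m_1+1},b^{m_2+1}] = \phi(\zeta_{P,f})\, g[x,a^{m_1+1},b^{m_2+1}]/1$ — more precisely, by the Cauchy mean value property stated just before Theorem \ref{T6}, $f[x,a^{m_1+1},b^{m_2+1}]/g[x,a^{m_1+1},b^{m_2+1}] = \phi(\zeta_{P})$ where $\zeta_P = M_{f,g}(x,a^{m_1+1},b^{m_2+1})$, and likewise $f[x,a^{m_2+1},b^{m_1+1}]/g[x,a^{m_2+1},b^{m_1+1}] = \phi(\zeta_Q)$ with $\zeta_Q = M_{f,g}(x,a^{m_2+1},b^{m_1+1})$. (2) Evaluate at $x = x_g$: since $G(x_g)=0$, we have $(x_g-a)^{m_1-m_2} g[x_g,a^{m_1+1},b^{m_2+1}] = (b-x_g)^{m_1-m_2} g[x_g,a^{m_2+1},b^{m_1+1}]$; call this common positive value $D$. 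Then
\[
F(x_g) = D\big(\phi(\zeta_P) - \phi(\zeta_Q)\big),
\]
where now $\zeta_P, \zeta_Q$ are evaluated at $x = x_g$. (3) By Lemma \ref{L3} (applicable because $\phi' \neq 0$ on $(0,\infty)$, so in particular on $I=(a,b)$), $\zeta_P < \zeta_Q$. (4) Therefore if $\phi$ is strictly increasing, $\phi(\zeta_P) - \phi(\zeta_Q) < 0$, so $F(x_g) < 0$; since $F$ is strictly increasing on $(a,b)$ with unique zero $x_0$, we get $x_g < x_0$, i.e. $M_{g,m_1,m_2}(a,b) < M_{f,m_1,m_2}(a,b)$ — and this holds for all $(a,b)$. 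If $\phi$ is strictly decreasing, the inequality reverses, uniformly. That gives strict comparability.

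**Main obstacle.** The delicate points are two. First, one must confirm that $F$ is strictly increasing across its zero in \emph{both} parity cases ($m_1-m_2$ even and odd) with the unified sign convention $E_P(x) = (-1)^{m_1-m_2} E_Q(x)$; for the even case this is exactly what the proof of Theorem \ref{T1} establishes (the first summand increasing, the second decreasing), and for the odd case the $(-1)^{m_1-m_2} = -1$ absorbs the sign so that $-(b-x)^{m_1-m_2}h_2(x) = (x-b)^{m_1-m_2}h_2(x)$, whose monotonicity was also computed there — so $F$ is increasing in both cases, but I would state this explicitly rather than wave at it. Second, and more substantively, one must ensure Lemma \ref{L3} genuinely applies: its hypotheses require $g^{(n+1)}$ nonzero and $\phi'$ nonzero on the interval, which are inherited from the global hypotheses of Theorem \ref{T7} restricted to $I=(a,b)$; the fact that $\zeta_P, \zeta_Q$ depend on the auxiliary point $x = x_g$ (an interior point of $I$) is harmless since $x_g \in (a,b) \subseteq I$ and $M_{f,g}$ is defined for any $n+1$ nodes in $I$. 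Once these are pinned down, the argument is short; the only real content is the sign bookkeeping, which I would lay out carefully but which involves no hard computation.
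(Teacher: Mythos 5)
Your proposal is correct in substance, but it takes a genuinely different route from the paper. The paper argues by contradiction: it supposes $M_{f,m_{1},m_{2}}(a,b)=M_{g,m_{1},m_{2}}(a,b)=x_{0}$ for some $(a,b)$, deduces from $E_{P}(x_0)=(-1)^{m_1-m_2}E_Q(x_0)$ for both $f$ and $g$ that $\tfrac{f[x_{0},a^{m_{1}+1},b^{m_{2}+1}]}{g[x_{0},a^{m_{1}+1},b^{m_{2}+1}]}=\tfrac{f[x_{0},a^{m_{2}+1},b^{m_{1}+1}]}{g[x_{0},a^{m_{2}+1},b^{m_{1}+1}]}$, contradicts this with Lemma \ref{L3}, and then upgrades ``never equal'' to a uniform strict inequality using continuity of the means in $(a,b)$, connectedness of $O=\{(x,y):0<x<y\}$, and the intermediate value theorem, finishing by symmetry. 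You instead evaluate the defining function $F$ for $f$ at $x_g=M_{g,m_1,m_2}(a,b)$, factor out the common value $D$ coming from $G(x_g)=0$, read off the sign of $F(x_g)=D\bigl(\phi(\zeta_P)-\phi(\zeta_Q)\bigr)$ from Lemma \ref{L3}, and locate $M_{f,m_1,m_2}$ relative to $x_g$ using the strict monotonicity of $F$ on $(a,b)$. What your route buys: it dispenses with the continuity-of-means and connectedness/IVT step (which the paper invokes without proof), and it actually identifies \emph{which} mean is larger in terms of the direction of monotonicity of $\phi$. What it costs: you must justify the strict monotonicity of $F$ in both parity cases — the even case is in the proof of Theorem \ref{T1}, but the odd case is exactly the omitted proof of Theorem \ref{T2}, so you should write it out — and you must make the sign normalization explicit: $D>0$ and ``$F$ increasing'' presuppose $g^{(n+1)}>0$ and $f^{(n+1)}>0$ respectively, which you may assume without loss of generality since replacing $f$ by $-f$ (or $g$ by $-g$) changes neither mean and preserves strict monotonicity of $\phi$, only swapping which of the two conclusions holds; the constancy of these signs on $(0,\infty)$ is what makes your inequality uniform over all $(a,b)$. (Both your proof and the paper's apply Lemma \ref{L3}, whose hypothesis is $\phi'\neq 0$ rather than strict monotonicity of $\phi$; this slight mismatch is present in the paper's own argument as well, so it is not a defect peculiar to your approach.)
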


\begin{proof}
Suppose that $M_{f,m_{1},m_{2}}(a,b)=M_{g,m_{1},m_{2}}(a,b)=x_{0}$ for 
\textit{some} $(a,b)\in O=\left\{ (x,y):0<x<y\right\} $. Note that $%
g(x_{0})-P_{g}(x_{0})\neq 0$ and $g(x_{0})-Q_{g}(x_{0})\neq 0$ since $%
g^{(n+1)}$ is nonzero on $I$. Then $%
E_{P,f}(x_{0})=(-1)^{m_{1}-m_{2}}E_{Q,f}(x_{0})$ and $%
E_{P,g}(x_{0})=(-1)^{m_{1}-m_{2}}E_{Q,g}(x_{0})$, which implies that $\tfrac{%
E_{P,f}(x_{0})}{E_{P,g}(x_{0})}=\tfrac{E_{Q,f}(x_{0})}{E_{Q,g}(x_{0})}$. By (%
\ref{epq}), we then have $\tfrac{f[x_{0},a^{m_{1}+1},b^{m_{2}+1}]}{%
g[x_{0},a^{m_{1}+1},b^{m_{2}+1}]}=\tfrac{f[x_{0},a^{m_{2}+1},b^{m_{1}+1}]}{%
g[x_{0},a^{m_{2}+1},b^{m_{1}+1}]}$. Let $\zeta _{P}=\phi ^{-1}\left( \tfrac{%
f[x_{0},a^{m_{1}+1},b^{m_{2}+1}]}{g[x_{0},a^{m_{1}+1},b^{m_{2}+1}]}\right) $
and $\zeta _{Q}=\phi ^{-1}\left( \tfrac{f[x_{0},a^{m_{2}+1},b^{m_{1}+1}]}{%
g[x_{0},a^{m_{2}+1},b^{m_{1}+1}]}\right) $. By Lemma \ref{L3}, $\zeta
_{P}<\zeta _{Q}$, which contradicts the fact that $\tfrac{%
f[x_{0},a^{m_{1}+1},b^{m_{2}+1}]}{g[x_{0},a^{m_{1}+1},b^{m_{2}+1}]}=\tfrac{%
f[x_{0},a^{m_{2}+1},b^{m_{1}+1}]}{g[x_{0},a^{m_{2}+1},b^{m_{1}+1}]}$. Thus $%
M_{f,m_{1},m_{2}}(a,b)$ and $M_{g,m_{1},m_{2}}(a,b)$ are never equal on $O$.
Since $M_{f,m_{1},m_{2}}$ and $M_{g,m_{1},m_{2}}$ are each continuous on $O$
and $O$ is connected, that proves that either $%
M_{f,m_{1},m_{2}}(a,b)<M_{g,m_{1},m_{2}}(a,b)$ or $%
M_{f,m_{1},m_{2}}(a,b)>M_{g,m_{1},m_{2}}(a,b)$ for all $(a,b)\in O$ by the
intermediate value theorem. Since the means \ $M_{f,m_{1},m_{2}}$ are
symmetric, that proves Theorem \ref{T7}.
\end{proof}

\begin{theorem}
\label{T8}Let $m_{2}<m_{1}$ be two given nonnegative integers, with $%
n=m_{1}+m_{2}+1$, and suppose that $f,g\in C^{n+2}(0,\infty )$. Then $%
M_{f,m_{1},m_{2}}(a,b)=M_{g,m_{1},m_{2}}(a,b)$ for all $(a,b)\in \Re
_{2}^{+} $ if and only if $g(x)=cf(x)+p(x)$ for some constant $c$ and some
polynomial $p\in \pi _{n}$.
\end{theorem}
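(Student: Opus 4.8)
The plan is to prove both directions, with the easy direction first. For the ``if'' direction, suppose $g = cf + p$ with $p \in \pi_n$. By Remark after Theorem \ref{T2} (the remark that $M_{f-R,m_1,m_2} = M_{f,m_1,m_2}$ for any $R \in \pi_n$), we immediately get $M_{g,m_1,m_2} = M_{cf,m_1,m_2}$. Since divided differences are linear in $f$, multiplying $f$ by a nonzero constant $c$ multiplies both sides of the defining equation \eqref{meandef} by $c$, hence does not change the solution set; therefore $M_{cf,m_1,m_2} = M_{f,m_1,m_2}$. (If $c=0$ then $g\in\pi_n$ and $g^{(n+1)}\equiv 0$, so strictly speaking we should assume the hypotheses of Theorems \ref{T1}/\ref{T2} hold for both $f$ and $g$, i.e.\ $c\neq 0$; I would note this mild caveat.)

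For the ``only if'' direction, assume $M_{f,m_1,m_2}(a,b) = M_{g,m_1,m_2}(a,b)$ for all $(a,b) \in \Re_2^+$. The key idea is to extract, from this global equality of means, a differential relation forcing $\phi = f^{(n+1)}/g^{(n+1)}$ to be constant. First I would argue that $\phi$ cannot be strictly monotonic: if it were, Theorem \ref{T7} would give strict comparability, contradicting equality. So $\phi$ is not strictly monotonic on $(0,\infty)$. This alone is not quite enough, so the real work is to upgrade ``not strictly monotonic'' to ``constant.'' Here is where I would localize: fix any point $t \in (0,\infty)$; on a small subinterval $I$ around $t$ on which $\phi'$ has constant sign (if such an interval existed for some $t$) we could run the Leach--Sholander machinery (Theorem \ref{T6}, Lemma \ref{L3}) restricted to nodes $a,b \in I$ to again derive $\zeta_P < \zeta_Q$ and hence $M_f \neq M_g$ near $t$, a contradiction. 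Therefore $\phi'$ vanishes on a dense set; combined with continuity of $\phi'$ (note $f,g \in C^{n+2}$ so $\phi \in C^1$ where $g^{(n+1)} \neq 0$), we get $\phi' \equiv 0$, i.e.\ $\phi \equiv c$ for some constant $c$.

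From $f^{(n+1)} = c\, g^{(n+1)}$ on $(0,\infty)$, integrating $n+1$ times gives $f = cg + p$ for some polynomial $p$ of degree at most $n$, which is the desired conclusion (rearranging, $g = c^{-1}f - c^{-1}p$, and $c \neq 0$ since $f^{(n+1)} \neq 0$). The main obstacle I anticipate is the middle step: making rigorous the passage from ``$M_f = M_g$ everywhere'' to ``$\phi$ is locally monotone-free,'' because Lemma \ref{L3} and Theorem \ref{T7} as stated take $\phi$ strictly monotonic on all of $(0,\infty)$ (or on $I$), and one must check that their proofs go through verbatim when $I$ is merely a small subinterval with $\phi' \neq 0$ there — which they do, since the Leach--Sholander result (Theorem \ref{T6}) is local in the nodes. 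An alternative, possibly cleaner route for the middle step: if $\phi$ is non-constant then by continuity it is strictly monotone on some subinterval $I=(a_0,b_0)$, and then applying Theorem \ref{T7}'s argument with the roles of $f,g$ played by their restrictions to $I$ yields strict inequality between $M_{f,m_1,m_2}(a,b)$ and $M_{g,m_1,m_2}(a,b)$ for $a,b \in I$, contradicting the hypothesis. I would write it this way to reuse Theorem \ref{T7} as a black box.
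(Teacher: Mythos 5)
Your proposal is correct and follows essentially the same route as the paper: the easy direction is handled by invariance of the mean under adding a $\pi_n$ polynomial and scaling, and for the converse the paper does exactly what you describe in your ``alternative, possibly cleaner route'' --- if $\phi=f^{(n+1)}/g^{(n+1)}$ is nonconstant, continuity of $\phi'$ gives a subinterval $I$ on which $\phi$ is strictly monotone, and rerunning the Theorem \ref{T7} argument with $I$ in place of $(0,\infty)$ yields strict comparability there, a contradiction, whence $\phi$ is constant and $g=cf+p$. Your remarks about $c\neq 0$ and the localization of Lemma \ref{L3}/Theorem \ref{T6} are sensible refinements of details the paper leaves implicit.
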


\begin{proof}
$(\Longleftarrow $ Suppose that $g(x)=cf(x)+p(x)$ for some constant $c$ and
some polynomial $p\in \pi _{n}$. Then it is trivial that $P_{f}=P_{g}$ and $%
Q_{f}=Q_{g}$, which implies that $%
M_{f,m_{1},m_{2}}(a,b)=M_{g,m_{1},m_{2}}(a,b)$ for all $(a,b)\in \Re
_{2}^{+} $.

$(\Longrightarrow $ Suppose that $%
M_{f,m_{1},m_{2}}(a,b)=M_{g,m_{1},m_{2}}(a,b)$ for all $(a,b)\in \Re
_{2}^{+} $, and assume that $\phi (x)=\tfrac{f^{(n+1)}(x)}{g^{(n+1)}(x)}$ is
not a constant function on $(0,\infty )$. Then $\phi $ is strictly monotone
on some open interval $I$ since $\phi ^{\prime }$ is continuous. Arguing
exactly as in the proof of Theorem \ref{T7}, with $I$ replacing $(0,\infty )$%
, we conclude that either $M_{f,m_{1},m_{2}}(a,b)<M_{g,m_{1},m_{2}}(a,b)$ or 
$M_{f,m_{1},m_{2}}(a,b)>M_{g,m_{1},m_{2}}(a,b)$ for all $(a,b)\in I$, which
is a contradiction. Thus $\tfrac{f^{(n+1)}(x)}{g^{(n+1)}(x)}$ must be a
constant function on $(0,\infty )$, which then implies that $g(x)=cf(x)+p(x)$
for some constant $c$ and some polynomial $p\in \pi _{n}$.
\end{proof}

The proof of the following theorem is very similar to the proofs of (\cite%
{h1}, lemma 1.2) and ( \cite{h1}, Theorem 1.4 and its Corollary), and we
omit them.

\begin{theorem}
\label{T9}Suppose that $f\in C^{n+2}(0,\infty )$ and that $M_{f,m_{1},m_{2}}$
is a homogeneous mean. Then $f^{(n+1)}(x)=cx^{p}$ for some real numbers $c$
and $p$.
\end{theorem}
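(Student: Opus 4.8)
The plan is to combine homogeneity of $M_{f,m_{1},m_{2}}$ with the scaling behaviour of Hermite interpolants, reduce to the setting of Theorem \ref{T8}, and then solve a Cauchy-type functional equation for $f^{(n+1)}$.

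\textbf{Step 1 (scaling).} Fix $\lambda>0$ and put $g(x)=f(\lambda x)$. Since $\frac{d^{j}}{dx^{j}}g(x)=\lambda^{j}f^{(j)}(\lambda x)$, the Hermite interpolant $\widetilde P$ to $g$ that matches $g,g',\dots,g^{(m_{1})}$ at $a$ and $g,g',\dots,g^{(m_{2})}$ at $b$ is exactly $\widetilde P(x)=P(\lambda x)$, where $P$ is the interpolant to $f$ with the analogous data at $\lambda a$ and $\lambda b$; similarly $\widetilde Q(x)=Q(\lambda x)$. Hence $E_{\widetilde P}(x)=E_{P}(\lambda x)$ and $E_{\widetilde Q}(x)=E_{Q}(\lambda x)$, so $x\in(a,b)$ solves $E_{\widetilde P}(x)=(-1)^{m_{1}-m_{2}}E_{\widetilde Q}(x)$ precisely when $\lambda x\in(\lambda a,\lambda b)$ solves the corresponding equation for $f$. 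By uniqueness, $M_{g,m_{1},m_{2}}(a,b)=\tfrac{1}{\lambda}M_{f,m_{1},m_{2}}(\lambda a,\lambda b)$, and since $M_{f,m_{1},m_{2}}$ is homogeneous this equals $M_{f,m_{1},m_{2}}(a,b)$ for every $(a,b)\in\Re_{2}^{+}$.

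\textbf{Step 2 (apply Theorem \ref{T8}).} As $f\in C^{n+2}(0,\infty)$, so is $g$, and Theorem \ref{T8} gives, for each $\lambda>0$, a constant $c_{\lambda}$ and a polynomial $p_{\lambda}\in\pi_{n}$ with $f(\lambda x)=c_{\lambda}f(x)+p_{\lambda}(x)$ for all $x>0$. Differentiating $n+1$ times in $x$ annihilates $p_{\lambda}$ and yields $\lambda^{n+1}f^{(n+1)}(\lambda x)=c_{\lambda}f^{(n+1)}(x)$. Write $h=f^{(n+1)}\in C^{1}(0,\infty)$ and $\psi(\lambda)=c_{\lambda}\lambda^{-(n+1)}$, so $h(\lambda x)=\psi(\lambda)h(x)$ on $(0,\infty)$.

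\textbf{Step 3 (functional equation).} If $h\equiv0$ then $f$ coincides on $(0,\infty)$ with a polynomial in $\pi_{n}$, which forces $E_{P}\equiv E_{Q}\equiv0$ and leaves $M_{f,m_{1},m_{2}}$ undefined; hence $h(x_{0})\neq0$ for some $x_{0}>0$. Then $\psi(\lambda)=h(\lambda x_{0})/h(x_{0})$ is continuous in $\lambda$, and in particular $h$ is nowhere zero. Comparing $h(\lambda\mu x_{0})=\psi(\lambda)\psi(\mu)h(x_{0})$ with $h(\lambda\mu x_{0})=\psi(\lambda\mu)h(x_{0})$ gives $\psi(\lambda\mu)=\psi(\lambda)\psi(\mu)$ for all $\lambda,\mu>0$, so the continuous multiplicative function $\psi$ equals $\psi(\lambda)=\lambda^{p}$ for some real $p$. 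Putting $y=\lambda x_{0}$ in $h(\lambda x_{0})=\lambda^{p}h(x_{0})$ gives $h(y)=cy^{p}$ with $c=h(x_{0})x_{0}^{-p}$, i.e.\ $f^{(n+1)}(x)=cx^{p}$.

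The derivative bookkeeping in Step 1 and the standard solution of the multiplicative equation in Step 3 are routine. The one point needing care is that I avoid any question about how $c_{\lambda}$ depends on $\lambda$ in the raw identity $f(\lambda x)=c_{\lambda}f(x)+p_{\lambda}(x)$: I first differentiate to remove $p_{\lambda}$, and only afterwards read off $c_{\lambda}=\lambda^{n+1}h(\lambda x_{0})/h(x_{0})$, whose continuity is then immediate. Ruling out the degenerate case $h\equiv0$ is the only other thing to watch, and it is exactly what makes the hypothesis ``$M_{f,m_{1},m_{2}}$ is a mean'' do its work.
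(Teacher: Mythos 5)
Your argument is correct and is essentially the proof the paper has in mind: it omits the details and points to \cite{h1}, where the same strategy is used, namely that homogeneity plus the scaling of Hermite data gives $M_{f(\lambda\cdot),m_{1},m_{2}}=M_{f,m_{1},m_{2}}$, Theorem \ref{T8} then yields $f(\lambda x)=c_{\lambda}f(x)+p_{\lambda}(x)$, and differentiating $n+1$ times reduces everything to the multiplicative Cauchy equation. The only spot worth one extra line is your assertion that $h$ is nowhere zero: note $\psi(1)=1$ and that $\psi(\lambda_{1})=0$ would force $h(\lambda_{1}x)\equiv 0$, contradicting $h(x_{0})\neq 0$, so $\psi$ never vanishes and the conclusion $\psi(\lambda)=\lambda^{p}$ stands.
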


\qquad Theorem \ref{T9} implies that the means $M_{p,m_{1},m_{2}}$ are the
only homogeneous means among the general class of means $M_{f,m_{1},m_{2}}$.

\begin{theorem}
\label{T10}$M_{p,m_{1},m_{2}}(a,b)$ is increasing in $p$ for each fixed $%
m_{1},m_{2},a,$ and $b$.
\end{theorem}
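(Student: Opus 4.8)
\textbf{Proof proposal for Theorem \ref{T10}.}

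The plan is to use the characterization of $M_{p,m_1,m_2}$ via the Cauchy-mean comparison machinery of Theorem \ref{T7}, since that theorem already reduces ``strict comparability of two means'' to ``strict monotonicity of the ratio of the top derivatives.'' Fix $m_1, m_2$ and let $f_p(x) = x^p$, so that $f_p^{(n+1)}(x) = p(p-1)\cdots(p-n)\, x^{p-n-1}$ when $p \notin \{0,1,\dots,n\}$ (and $M_{p,m_1,m_2}$ is defined by the $x^k\log x$ convention, hence by continuity in $p$, at the exceptional integers). Given two exponents $p < q$, I would form $\phi(x) = f_p^{(n+1)}(x)/f_q^{(n+1)}(x)$, which up to a nonzero multiplicative constant equals $x^{p-q}$; since $p - q \neq 0$, this is strictly monotone on $(0,\infty)$. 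By Theorem \ref{T7}, $M_{p,m_1,m_2}$ and $M_{q,m_1,m_2}$ are strictly comparable, i.e. one strictly dominates the other everywhere on $\Re_2^+$.

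It then remains to decide \emph{which} mean is larger, and this is the step that needs a genuine argument rather than a quotation. The cleanest route is to exploit the homogeneity of the $M_{p,m_1,m_2}$ (established in the proof of Theorem \ref{T5}, or via Theorem \ref{T9}): it suffices to compare $M_{p,m_1,m_2}(1,b)$ and $M_{q,m_1,m_2}(1,b)$ for a single convenient value of $b$, or to examine the behavior near the diagonal $a = b$. I would look at the asymptotics of $x_0 = M_{p,m_1,m_2}(a,b)$ as $b \to a^{+}$: writing $b = a(1+\varepsilon)$ and expanding the defining equation \eqref{meandef} in powers of $\varepsilon$ using the Mean Value Theorem for divided differences together with a Taylor expansion of $f_p^{(n+1)}$ about $a$, the leading correction term to $x_0$ will carry a factor that is an explicit increasing (affine) function of $p$. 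Concretely, one expects $M_{p,m_1,m_2}(a, a(1+\varepsilon)) = a + c_1 a\varepsilon + \big(c_2 + c_3 p\big) a \varepsilon^2 + O(\varepsilon^3)$ with $c_3 > 0$ depending only on $m_1, m_2$; this forces the strict comparability to point in the direction $p < q \Rightarrow M_{p,m_1,m_2} < M_{q,m_1,m_2}$ near the diagonal, and by the global dichotomy from Theorem \ref{T7} this ordering then holds on all of $\Re_2^+$.

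The main obstacle is precisely the sign determination in that second step: the first-order term in $\varepsilon$ is $p$-independent (it just gives the midpoint-type leading behavior shared by all these means), so one must push the expansion to second order and correctly extract the coefficient of $p$, keeping track of the $(x-a)^{m_1-m_2}$ versus $(b-x)^{m_1-m_2}$ asymmetry in \eqref{meandef}. An attractive alternative that sidesteps the delicate expansion is to test against the known special values: Theorem \ref{T3} gives $M_{m_1+m_2+2,m_1,m_2} = A$, Theorem \ref{T4} gives $M_{-1,m_1,m_2} = H$, and Theorem \ref{T5} gives $M_{(m_1+m_2+1)/2,m_1,m_2} = G$ (when $m_1+m_2$ is even), and since $H < G < A$ with corresponding exponents $-1 < (m_1+m_2+1)/2 < m_1+m_2+2$ in the right order, the global dichotomy already pins down the direction on the nonempty open set of ``generic'' $(m_1,m_2)$; a short continuity/limiting argument in $m_1, m_2$ (or a direct check that the comparison direction cannot depend on $m_1, m_2$) then covers the remaining cases. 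Either way, once the direction is fixed for one pair $p < q$, transitivity and the everywhere-strict dichotomy give monotonicity of $p \mapsto M_{p,m_1,m_2}(a,b)$ in full, and continuity in $p$ across the integer exceptions upgrades ``increasing'' to hold for all real $p$.
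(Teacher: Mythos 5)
Your first step is exactly the paper's: take $f(x)=x^{p_1}$, $g(x)=x^{p_2}$, note $\phi(x)=f^{(n+1)}(x)/g^{(n+1)}(x)=cx^{p_1-p_2}$ is strictly monotone on $(0,\infty)$, and invoke Theorem \ref{T7} to get strict comparability of $M_{p_1,m_1,m_2}$ and $M_{p_2,m_1,m_2}$. The problem is in how you settle the direction. Your first route (second-order expansion of \eqref{meandef} near the diagonal $b\to a^+$) is only a sketch — you say yourself that extracting the coefficient of $p$ is the main obstacle, and nothing is actually computed, so as written it proves nothing. Your fallback route has a genuine logical gap: knowing $M_{-1,m_1,m_2}<M_{m_1+m_2+2,m_1,m_2}$ (or even $H<G<A$ at three exponents) does \emph{not}, via ``transitivity and the everywhere-strict dichotomy,'' determine the direction for an arbitrary pair $p<q$. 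Strict comparability of every pair makes the family totally ordered pointwise, but for, say, $0<p<q<m_1+m_2+2$ you do not know how $M_p$ compares with $M_{-1}$ or with $M_{m_1+m_2+2}$, so transitivity gives you nothing; an arbitrary total order on the exponents could be consistent with the one comparison you have verified.

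The missing ingredient is continuity in $p$ used as the gluing device, which is precisely what the paper does: fix $a<b$, observe that $M_{p_1,m_1,m_2}(a,b)\neq M_{p_2,m_1,m_2}(a,b)$ for every $(p_1,p_2)$ in the connected set $O=\{p_1<p_2\}$, and since $M_{p,m_1,m_2}(a,b)$ is continuous in $p$, the sign of the difference is constant on all of $O$; evaluating at the single pair $(-1,\,m_1+m_2+2)$, where Theorems \ref{T4} and \ref{T3} give $H(a,b)<A(a,b)$, fixes the direction for every pair at once. (Equivalently: $p\mapsto M_{p,m_1,m_2}(a,b)$ is continuous and injective, hence strictly monotone, and one known comparison pins down that it is increasing.) You mention continuity in $p$ only to handle the exceptional integer exponents, not for this step, which is where it is actually needed. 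Also, your detour through ``generic $(m_1,m_2)$'' and a limiting argument in $m_1,m_2$ is unnecessary: $m_1,m_2$ are fixed throughout, and the comparison $H<A$ with exponents $-1<m_1+m_2+2$ is available for every admissible pair, so Theorem \ref{T5} and any genericity consideration can be dropped entirely.
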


\begin{proof}
Let $f(x)=x^{p_{1}},g(x)=x^{p_{2}}$, where $p_{1}<p_{2}$. Then $\phi (x)=%
\tfrac{f^{(n+1)}(x)}{g^{(n+1)}(x)}=x^{p_{1}-p_{2}}$ is strictly monotonic on 
$(0,\infty )$. Let $0<a<b$ be fixed and let $O=\left\{ (p_{1},p_{2})\in \Re
_{2}:p_{1}<p_{2}\right\} $. By Theorem \ref{T7}, $M_{p_{1},m_{1},m_{2}}(a,b)%
\neq M_{p_{2},m_{1},m_{2}}(a,b)$ for all $(p_{1},p_{2})\in O$. Since $O$ is
connected and $M_{p,m_{1},m_{2}}(a,b)$ is a continuous function of $p$,
either $M_{p_{1},m_{1},m_{2}}(a,b)<M_{p_{2},m_{1},m_{2}}(a,b)$ or $%
M_{p_{1},m_{1},m_{2}}(a,b)>M_{p_{2},m_{1},m_{2}}(a,b)$ for all $%
(p_{1},p_{2})\in O$ by the intermediate value theorem. By Theorems \ref{T3}
and \ref{T4}, we must have $%
M_{p_{1},m_{1},m_{2}}(a,b)<M_{p_{2},m_{1},m_{2}}(a,b)$ for all $%
(p_{1},p_{2})\in O$ since it is well known that $H(a,b)\leq A(a,b)$. Since $%
a<b$ was arbitrary and $M_{p_{1},m_{1},m_{2}}$ is symmetric, that proves
Theorem \ref{T10}.
\end{proof}

The following theorem discusses the asymptotic behavior of $%
M_{p,m_{1},m_{2}} $ as $p$ approaches $\infty $ or $-\infty $.

\begin{theorem}
\label{T11}\textbf{\ }$\lim\limits_{p\rightarrow \infty
}M_{p,m_{1},m_{2}}(a,b)=\max \{a,b\}$ and $\lim\limits_{p\rightarrow -\infty
}M_{p,m_{1},m_{2}}(a,b)=\min \{a,b\}$.
\end{theorem}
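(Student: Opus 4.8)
The plan is to combine the monotonicity in $p$ from Theorem~\ref{T10} with an explicit computation of the relevant divided differences for $f=x^{p}$, $p$ a large integer, and then to handle $p\to-\infty$ by a reciprocal duality. First, by Theorem~\ref{T10} the map $p\mapsto M_{p,m_{1},m_{2}}(a,b)$ is increasing, and since $\min\{a,b\}\le M_{p,m_{1},m_{2}}(a,b)\le\max\{a,b\}$ both one--sided limits exist; by symmetry of the mean I may assume $0<a<b$, so I must show the limits equal $b$ and $a$ respectively. Because the map is increasing, $\lim_{p\to\infty}M_{p,m_{1},m_{2}}(a,b)=\sup_{p}M_{p,m_{1},m_{2}}(a,b)$, which is already the limit along integers $p>n$; for such $p$ one has $f(x)=x^{p}$, and with $q=p-n-1\ge 0$ the well--known identity expressing divided differences of $x^{p}$ as complete homogeneous symmetric polynomials of the (confluent) nodes gives
\[
f[x,a^{m_{1}+1},b^{m_{2}+1}]=\sum_{\alpha+\beta+\gamma=q}\binom{\beta+m_{1}}{m_{1}}\binom{\gamma+m_{2}}{m_{2}}x^{\alpha}a^{\beta}b^{\gamma},
\]
with $f[x,a^{m_{2}+1},b^{m_{1}+1}]$ the same sum but with $m_{1}$ and $m_{2}$ interchanged.

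Next comes the key estimate. Suppose for contradiction that $L:=\lim_{p\to\infty}M_{p,m_{1},m_{2}}(a,b)<b$, set $\delta=b-L>0$, and write $x_{0}=M_{p,m_{1},m_{2}}(a,b)$, so $a<x_{0}\le b-\delta$ for every integer $p>n$. Dividing the two sums and using~(\ref{meandef}),
\[
\left(\frac{b-x_{0}}{x_{0}-a}\right)^{m_{1}-m_{2}}=\frac{\sum_{\alpha+\beta+\gamma=q}\binom{\beta+m_{1}}{m_{1}}\binom{\gamma+m_{2}}{m_{2}}x_{0}^{\alpha}a^{\beta}b^{\gamma}}{\sum_{\alpha+\beta+\gamma=q}\binom{\beta+m_{2}}{m_{2}}\binom{\gamma+m_{1}}{m_{1}}x_{0}^{\alpha}a^{\beta}b^{\gamma}} .
\]
In the numerator I use $\binom{\gamma+m_{2}}{m_{2}}\le\binom{q+m_{2}}{m_{2}}$ and $x_{0}^{\alpha}a^{\beta}b^{\gamma}=b^{q}(x_{0}/b)^{\alpha}(a/b)^{\beta}$ with $x_{0}/b,\,a/b\in(0,1)$, let $\alpha,\beta$ range over all of $\{0,1,2,\dots\}$, and sum $\sum_{\alpha\ge 0}(x_{0}/b)^{\alpha}=b/(b-x_{0})\le b/\delta$ and $\sum_{\beta\ge 0}\binom{\beta+m_{1}}{m_{1}}(a/b)^{\beta}=b^{m_{1}+1}/(b-a)^{m_{1}+1}$, so the numerator is at most $b^{q}\binom{q+m_{2}}{m_{2}}\cdot\frac{b}{\delta}\cdot\frac{b^{m_{1}+1}}{(b-a)^{m_{1}+1}}$; here $b-x_{0}\ge\delta$ is essential. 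In the denominator I keep only the term $\alpha=\beta=0,\ \gamma=q$, giving the lower bound $\binom{q+m_{1}}{m_{1}}b^{q}$. Since $\binom{q+m_{2}}{m_{2}}/\binom{q+m_{1}}{m_{1}}\le m_{1}!/(m_{2}!\,q^{m_{1}-m_{2}})$,
\[
\left(\frac{b-x_{0}}{x_{0}-a}\right)^{m_{1}-m_{2}}\le\frac{m_{1}!}{m_{2}!}\cdot\frac{b^{m_{1}+2}}{\delta(b-a)^{m_{1}+1}}\cdot\frac{1}{q^{m_{1}-m_{2}}}\longrightarrow 0\qquad(p\to\infty),
\]
because $m_{1}>m_{2}$. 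Taking $(m_{1}-m_{2})$th roots forces $(b-x_{0})/(x_{0}-a)\to 0$, contradicting $(b-x_{0})/(x_{0}-a)\ge\delta/(b-a)>0$. Hence $L=b=\max\{a,b\}$.

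For $p\to-\infty$ I would reduce to the case just proved. Applying the reciprocal identity $F[z_{0},\dots,z_{N}]=(-1)^{N}(z_{0}\cdots z_{N})^{-1}f[z_{0}^{-1},\dots,z_{N}^{-1}]$ with $F(z)=f(1/z)$ (valid for confluent nodes by continuity) to $f(x)=x^{p}$, a direct simplification of~(\ref{meandef}) shows that $1/M_{p,m_{1},m_{2}}(a,b)$ is precisely the solution in $(1/b,1/a)$ of the instance of~(\ref{meandef}) defining $M_{-p,m_{1},m_{2}}$ there; by uniqueness and symmetry of the mean this yields
\[
M_{-p,m_{1},m_{2}}(a,b)\cdot M_{p,m_{1},m_{2}}(1/a,1/b)=1 .
\]
Letting $p\to\infty$ and applying the first part to the pair $(1/a,1/b)$, the second factor tends to $\max\{1/a,1/b\}=1/\min\{a,b\}$, so $M_{-p,m_{1},m_{2}}(a,b)\to\min\{a,b\}$, i.e. $\lim_{p\to-\infty}M_{p,m_{1},m_{2}}(a,b)=\min\{a,b\}$.

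The hard part is the uniformity of the estimate over all admissible positions of $x_{0}$: a mean--value form $f[x,a^{m_{1}+1},b^{m_{2}+1}]=f^{(n+1)}(\zeta)/(n+1)!$ only locates an intermediate point $\zeta\in(a,b)$ and gives no control on the ratio of the two divided differences as $x_{0}\to b$. The explicit symmetric--polynomial expansion (equivalently, the Hermite--Genocchi integral representation) circumvents this, because the only factor that could blow up as $x_{0}\to b$ is $(1-x_{0}/b)^{-1}$, which is exactly what the standing hypothesis $b-x_{0}\ge\delta$ controls, while every other $q$--dependent factor either cancels between numerator and denominator or is dominated by $\binom{q+m_{1}}{m_{1}}$.
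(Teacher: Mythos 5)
Your argument for $p\to\infty$ is correct and is genuinely different from the paper's. The paper substitutes Spitzbart's expansions (\ref{2}) and (\ref{3}) into (\ref{meandef}), normalizes by $\binom{p}{m_{1}}b^{p}$, shows every normalized term except the $k=m_{1}$, $l=0$ term of $B_{2}$ tends to $0$, and reads off from the limiting function $-(x-b)^{m_{1}-m_{2}-1}(b-a)^{-m_{2}-1}b^{-m_{1}}$ that the root must drift to $b$ --- an argument that degenerates when $m_{1}-m_{2}=1$, so the paper treats $m_{1}=1,m_{2}=0$ by a separate explicit computation combined with Theorem \ref{T10}. You instead invoke Theorem \ref{T10} once (to reduce to integer exponents and get existence of the one-sided limits), expand the two divided differences of $x^{p}$ as complete homogeneous symmetric polynomials in the confluent nodes, and run a quantitative contradiction: your upper bound $\binom{q+m_{2}}{m_{2}}b^{q}\cdot b\delta^{-1}\cdot b^{m_{1}+1}(b-a)^{-m_{1}-1}$ for the numerator against the single-term lower bound $\binom{q+m_{1}}{m_{1}}b^{q}$ for the denominator forces $\bigl((b-x_{0})/(x_{0}-a)\bigr)^{m_{1}-m_{2}}\to 0$, incompatible with $b-x_{0}\ge\delta$. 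This works uniformly for all $m_{1}>m_{2}$ (no special case), and it makes explicit the uniformity in $x_{0}$ that the paper's pointwise limiting argument leaves implicit; the cost is that it covers only integer $p$, which your monotonicity reduction handles.

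The $p\to-\infty$ step, however, contains a genuine error as written. The reciprocal identity you quote is false: with $F(z)=f(1/z)$ one does not have $F[z_{0},\dots,z_{N}]=(-1)^{N}(z_{0}\cdots z_{N})^{-1}f[z_{0}^{-1},\dots,z_{N}^{-1}]$; the correct statement is $f[z_{0}^{-1},\dots,z_{N}^{-1}]=(-1)^{N}(z_{0}\cdots z_{N})\,G[z_{0},\dots,z_{N}]$ with $G(z)=z^{N-1}f(1/z)$ (test $f(x)=x$, $N=2$: your version equates $0$ with $1/(z_{0}z_{1}z_{2})$). Carrying the correct identity through (\ref{meandef}) with $f(x)=x^{p}$ and $N-1=n$ gives $G(z)=z^{\,n-p}$, hence the duality $M_{n-p,m_{1},m_{2}}(a,b)\cdot M_{p,m_{1},m_{2}}(1/a,1/b)=1$, not your formula with $M_{-p}$; your product formula cannot hold, since together with the correct one it would give $M_{-p}=M_{n-p}$, contradicting the strict monotonicity in the exponent (Theorem \ref{T10}). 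The fix is harmless: $n$ is fixed, so letting $p\to\infty$ in the corrected duality and applying your first part at the pair $(1/a,1/b)$ still yields $\lim_{p'\to-\infty}M_{p',m_{1},m_{2}}(a,b)=1/\max\{1/a,1/b\}=\min\{a,b\}$. (The paper avoids the issue entirely by simply mirroring its $p\to+\infty$ argument with the roles of $a$ and $b$ exchanged.)
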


\begin{proof}
Since $M_{p,m_{1},m_{2}}(a,b)$ is symmetric, we may assume that $a<b$. We
prove that $\lim\limits_{p\rightarrow \infty }M_{p,m_{1},m_{2}}(a,b)=b$, the
proof of the other case being similar. By (\ref{meandef}), (\ref{2}), and (%
\ref{3}), $M_{p,m_{1},m_{2}}(a,b)$ is the unique solution, in $(a,b)$, of
the equation $(x-a)^{m_{1}-m_{2}}\left( A_{1}+B_{1}+C_{1}\right)
=(b-x)^{m_{1}-m_{2}}\left( A_{2}+B_{2}+C_{2}\right) $, where $f(x)=x^{p}$
and $A_{j},B_{j},C_{j},j=1,2$ are given by (\ref{a1b1}), (\ref{a2b2c2}), and
(\ref{c1}). For $a\leq x\leq b$, it follows easily that $\tfrac{A_{1}}{%
\binom{p}{m_{1}}b^{p}},\tfrac{B_{1}}{\binom{p}{m_{1}}b^{p}},\tfrac{C_{1}}{%
\binom{p}{m_{1}}b^{p}},\tfrac{A_{2}}{\binom{p}{m_{1}}b^{p}},$ and $\tfrac{%
C_{2}}{\binom{p}{m_{1}}b^{p}}$ each approach $0$ as $p\rightarrow \infty $.
In the double summation for $B_{2}$, take $k=m_{1}$, which implies that $l=0$
and thus $\tfrac{B_{2}}{\binom{p}{m_{1}}b^{p}}\rightarrow
(b-a)^{-m_{2}-1}(b-x)^{-1}b^{-m_{1}}$as $p\rightarrow \infty $. Thus $%
(x-a)^{m_{1}-m_{2}}\left( A_{1}+B_{1}+C_{1}\right)
-(b-x)^{m_{1}-m_{2}}\left( A_{2}+B_{2}+C_{2}\right) \rightarrow
-(x-b)^{m_{1}-m_{2}-1}(b-a)^{-m_{2}-1}b^{-m_{1}}$ as $p\rightarrow \infty $,
which easily implies that $M_{p,m_{1},m_{2}}(a,b)$ must be approaching $b$
if $m_{1}-m_{2}>1$. We now consider the case $m_{1}=1$, $m_{2}=0$
separately. Then $M_{p,m_{1},m_{2}}(a,b)$ is the unique solution, in $(a,b)$%
, of the equation $(x-a)f[x,a,a,b]+(x-b)f[x,a,b,b]=0,f(x)=x^{p}$. Using $%
f[x,a,a,b]=\tfrac{\tfrac{f(x)-f(a)-(x-a)f\,^{\prime }(a)}{(x-a)^{2}}-\tfrac{%
f(b)-f(a)-(b-a)f\,^{\prime }(a)}{(b-a)^{2}}}{x-b}$ and $f[x,a,b,b]=\tfrac{%
\tfrac{f(x)-f(b)-(x-b)f\,^{\prime }(b)}{(x-b)^{2}}-\tfrac{%
f(a)-f(b)-(a-b)f\,^{\prime }(b)}{(b-a)^{2}}}{x-a}$, some simplification
yields the equation $L_{p}(x)=0$, where $L_{p}(x)=2\left( x^{p}-a^{p}\right)
(b-a)-2\left( b^{p}-a^{p}\right) (x-a)-p\left( b^{p-1}-a^{p-1}\right)
(x-b)(x-a)$. For $a\leq x\leq b$, $\tfrac{L_{p}(x)}{p\left(
b^{p}-a^{p}\right) }\rightarrow \tfrac{1}{b}(x-b)(x-a)$ as $p\rightarrow
\infty $. Since $M_{p,m_{1},m_{2}}$ is increasing in $p$ by Theorem \ref{T10}%
, $M_{p,m_{1},m_{2}}(a,b)$ must be approaching $b$ as $p\rightarrow \infty $.
\end{proof}

\section{Special Cases}

We now investigate the special case when $m_{1}-m_{2}=2,$ where $m_{1}+m_{2}$
is even. In this case, the mean $M_{f,m_{1},m_{2}}$ can be obtained by
solving a linear equation. In particular, if $f(x)=x^{p}$ where $p$ is an
integer, then $M_{p,m_{1},m_{2}}$ is a rational mean. Since $%
P^{(j)}(a)=Q^{(j)}(a)$ and $P^{(j)}(b)=Q^{(j)}(b),$ $j=0,1,...,m_{2}$, $P-Q$
has zeros of multiplicity $m_{2}+1$ at $x=a$ and at $x=b$. Thus $%
P(x)-Q(x)=(x-a)^{m_{2}+1}(x-b)^{m_{2}+1}R(x)$, where $R$ is a polynomial of
degree $m_{1}-m_{2}-1$. Using the formulas in \cite{aw} for Hermite
interpolation, one can directly compute the polynomials $P$ and $Q$ which
satisfy (\ref{PQ}).

\begin{gather}
P(x)=\left( \tfrac{x-b}{a-b}\right)
^{m_{2}+1}\tsum\limits_{j=0}^{m_{1}}\tsum\limits_{k=0}^{m_{1}-j}\tfrac{%
(x-a)^{j}}{j!}\tbinom{m_{2}+k}{k}\left( \tfrac{x-a}{b-a}\right)
^{k}f^{(j)}(a)+  \label{P} \\
\left( \tfrac{x-a}{b-a}\right)
^{m_{1}+1}\tsum\limits_{j=0}^{m_{2}}\tsum\limits_{k=0}^{m_{2}-j}\tfrac{%
(x-b)^{j}}{j!}\tbinom{m_{1}+k}{k}\left( \tfrac{x-b}{a-b}\right)
^{k}f^{(j)}(b)  \notag
\end{gather}

and

\begin{gather}
Q(x)=\left( \tfrac{x-b}{a-b}\right)
^{m_{1}+1}\tsum\limits_{j=0}^{m_{2}}\tsum\limits_{k=0}^{m_{2}-j}\tfrac{%
(x-a)^{j}}{j!}\tbinom{m_{1}+k}{k}\left( \tfrac{x-a}{b-a}\right)
^{k}f^{(j)}(a)+  \label{Q} \\
\left( \tfrac{x-a}{b-a}\right)
^{m_{2}+1}\tsum\limits_{j=0}^{m_{1}}\tsum\limits_{k=0}^{m_{1}-j}\tfrac{%
(x-b)^{j}}{j!}\tbinom{m_{2}+k}{k}\left( \tfrac{x-b}{a-b}\right)
^{k}f^{(j)}(b)  \notag
\end{gather}%
Since $m_{1}-m_{2}=2$, $R$ is a linear polynomial, which implies that $%
P(x)-Q(x)=(x-a)^{m_{2}+1}(x-b)^{m_{2}+1}(cx+d)$. We now determine $c$ and $d$%
. First, $d=\tfrac{P(0)-Q(0)}{a^{m_{2}+1}b^{m_{2}+1}}=\tfrac{%
E_{Q}(0)-E_{P}(0)}{a^{m_{2}+1}b^{m_{2}+1}}=\tfrac{%
a^{m_{2}+1}b^{m_{1}+1}f[0,a^{m_{2}+1},b^{m_{1}+1}]-a^{m_{1}+1}b^{m_{2}+1}f[0,a^{m_{1}+1},b^{m_{2}+1}]%
}{a^{m_{2}+1}b^{m_{2}+1}}=$

$%
b^{m_{1}-m_{2}}f[0,a^{m_{2}+1},b^{m_{1}+1}]-a^{m_{1}-m_{2}}f[0,a^{m_{1}+1},b^{m_{2}+1}]\Rightarrow 
$%
\begin{equation}
d=b^{2}f[0,a^{m_{2}+1},b^{m_{2}+3}]-a^{2}f[0,a^{m_{2}+3},b^{m_{2}+1}]
\label{9}
\end{equation}

Again, using the formula discussed earlier due to Spitzbart (see \cite{s},
Theorem 2), $f[0,a^{m_{1}+1},b^{m_{2}+1}]=$

$\sum\limits_{k=0}^{m_{1}}\sum\limits_{l=0}^{m_{1}-k}\tfrac{1}{k!}%
(-1)^{m_{1}+k}\binom{m_{2}+l}{m_{2}}%
(a-b)^{-m_{2}-1-l}a^{-m_{1}-1+k+l}f^{(k)}(a)+$

$\sum\limits_{k=0}^{m_{2}}\sum\limits_{l=0}^{m_{2}-k}\tfrac{1}{k!}%
(-1)^{m_{2}+k}\binom{m_{1}+l}{m_{1}}%
(b-a)^{-m_{1}-1-l}b^{-m_{2}-1+k+l}f^{(k)}(b)+$

$a^{-m_{1}-1}b^{-m_{2}-1}f(0)$, and

$f[0,a^{m_{2}+1},b^{m_{1}+1}]=$

$\sum\limits_{k=0}^{m_{2}}\sum\limits_{l=0}^{m_{2}-k}\tfrac{1}{k!}%
(-1)^{m_{2}+k}\binom{m_{1}+l}{m_{1}}%
(a-b)^{-m_{1}-1-l}a^{-m_{2}-1+k+l}f^{(k)}(a)+$

$\sum\limits_{k=0}^{m_{1}}\sum\limits_{l=0}^{m_{1}-k}\tfrac{1}{k!}%
(-1)^{m_{1}+k}\binom{m_{2}+l}{m_{2}}%
(b-a)^{-m_{2}-1-l}b^{-m_{1}-1+k+l}f^{(k)}(b)+$

$a^{-m_{2}-1}b^{-m_{1}-1}f(0)$. Letting $m_{1}=m_{2}+2$ gives $%
f[0,a^{m_{2}+1},b^{m_{2}+3}]=$

$\sum\limits_{k=0}^{m_{2}}\sum\limits_{l=0}^{m_{2}-k}\tfrac{1}{k!}%
(-1)^{m_{2}+k}\binom{m_{2}+l+2}{m_{2}+2}%
(a-b)^{-m_{2}-3-l}a^{-m_{2}-1+k+l}f^{(k)}(a)+$

$\sum\limits_{k=0}^{m_{2}+2}\sum\limits_{l=0}^{m_{2}+2-k}\tfrac{1}{k!}%
(-1)^{m_{2}+k}\binom{m_{2}+l}{m_{2}}%
(b-a)^{-m_{2}-1-l}b^{-m_{2}-3+k+l}f^{(k)}(b)+$

$a^{-m_{2}-1}b^{-m_{2}-3}f(0)$, and $f[0,a^{m_{2}+3},b^{m_{2}+1}]=$

$\sum\limits_{k=0}^{m_{2}+2}\sum\limits_{l=0}^{m_{2}+2-k}\tfrac{1}{k!}%
(-1)^{m_{2}+k}\binom{m_{2}+l}{m_{2}}%
(a-b)^{-m_{2}-1-l}a^{-m_{2}-3+k+l}f^{(k)}(a)+$

$\sum\limits_{k=0}^{m_{2}}\sum\limits_{l=0}^{m_{2}-k}\tfrac{1}{k!}%
(-1)^{m_{2}+k}\binom{m_{2}+2+l}{m_{2}+2}%
(b-a)^{-m_{2}-3-l}b^{-m_{2}-1+k+l}f^{(k)}(b)+$

$a^{-m_{2}-3}b^{-m_{2}-1}f(0)$. Hence, by (\ref{9}), $d=$ $%
b^{2}f[0,a^{m_{2}+1},b^{m_{2}+3}]-$

$a^{2}f[0,a^{m_{2}+3},b^{m_{2}+1}]=$

$\sum\limits_{k=0}^{m_{2}}\sum\limits_{l=0}^{m_{2}-k}\tfrac{(-1)^{m_{2}+k}}{%
k!}\binom{m_{2}+l+2}{m_{2}+2}%
b^{2}(a-b)^{-m_{2}-3-l}a^{-m_{2}-1+k+l}f^{(k)}(a)+$

$\sum\limits_{k=0}^{m_{2}+2}\sum\limits_{l=0}^{m_{2}+2-k}\tfrac{%
(-1)^{m_{2}+k}}{k!}\binom{m_{2}+l}{m_{2}}%
(b-a)^{-m_{2}-1-l}b^{-m_{2}-1+k+l}f^{(k)}(b)+$

$a^{-m_{2}-1}b^{-m_{2}-1}f(0)-\sum\limits_{k=0}^{m_{2}+2}\sum%
\limits_{l=0}^{m_{2}+2-k}\tfrac{(-1)^{m_{2}+k}}{k!}\binom{m_{2}+l}{m_{2}}%
(a-b)^{-m_{2}-1-l}a^{-m_{2}-1+k+l}f^{(k)}(a)$

$-\sum\limits_{k=0}^{m_{2}}\sum\limits_{l=0}^{m_{2}-k}\tfrac{(-1)^{m_{2}+k}}{%
k!}\binom{m_{2}+2+l}{m_{2}+2}%
a^{2}(b-a)^{-m_{2}-3-l}b^{-m_{2}-1+k+l}f^{(k)}(b)-$

$a^{-m_{2}-1}b^{-m_{2}-1}f(0)=$

$\sum\limits_{k=0}^{m_{2}}\sum\limits_{l=0}^{m_{2}-k}\tfrac{(-1)^{m_{2}+k}}{%
k!}\binom{m_{2}+l+2}{m_{2}+2}(a-b)^{-m_{2}-3-l}\left(
b^{2}a^{-m_{2}-1+k+l}f^{(k)}(a)+(-1)^{m_{2}+l}a^{2}b^{-m_{2}-1+k+l}f^{(k)}(b)\right) + 
$

$\sum\limits_{k=0}^{m_{2}+2}\sum\limits_{l=0}^{m_{2}+2-k}\tfrac{%
(-1)^{m_{2}+k}}{k!}\binom{m_{2}+l}{m_{2}}(b-a)^{-m_{2}-1-l}\left(
b^{-m_{2}-1+k+l}f^{(k)}(b)+(-1)^{m_{2}+l}a^{-m_{2}-1+k+l}f^{(k)}(a)\right) $%
. Now we find $c$. It is not hard to show, using (\ref{PQ}), that the
coefficient, $c_{P,m_{1},m_{2}\text{,}}$ of the highest power in $P$, which
is $x^{m_{1}+m_{2}+1}$, is given by $\sum\limits_{j=0}^{m_{1}}\tfrac{\binom{%
m_{2}+m_{1}-j}{m_{2}}f^{(j)}(a)}{j!(a-b)^{m_{2}+1}(b-a)^{m_{1}-j}}%
+\sum\limits_{j=0}^{m_{2}}\tfrac{\binom{m_{2}+m_{1}-j}{m_{1}}f^{(j)}(b)}{%
j!(b-a)^{m_{1}+1}(a-b)^{m_{2}-j}}$ or

\begin{equation}
c_{P,m_{1},m_{2}}=\tfrac{(-1)^{m_{2}}}{(b-a)^{m_{1}+m_{2}+1}}\left(
\tsum\limits_{j=0}^{m_{2}}\tfrac{(-1)^{j}\binom{m_{2}+m_{1}-j}{m_{1}}%
(b-a)^{j}f^{(j)}(b)}{j!}-\tsum\limits_{j=0}^{m_{1}}\tfrac{\binom{%
m_{2}+m_{1}-j}{m_{2}}(b-a)^{j}f^{(j)}(a)}{j!}\right)  \label{cp}
\end{equation}%
Similarly, the coefficient, $c_{Q,m_{1},m_{2}\text{,}}$ of the highest power
in $Q$, which is $x^{m_{1}+m_{2}+1}$, is given by $\sum\limits_{j=0}^{m_{2}}%
\tfrac{\binom{m_{2}+m_{1}-j}{m_{1}}f^{(j)}(a)}{%
j!(a-b)^{m_{1}+1}(b-a)^{m_{2}-j}}+\sum\limits_{j=0}^{m_{1}}\tfrac{\binom{%
m_{2}+m_{1}-j}{m_{2}}f^{(j)}(b)}{j!(b-a)^{m_{2}+1}(a-b)^{m_{1}-j}}$ or

\begin{equation}
c_{Q,m_{1},m_{2}}=\tfrac{(-1)^{m_{1}}}{(b-a)^{m_{1}+m_{2}+1}}\left(
\tsum\limits_{j=0}^{m_{1}}\tfrac{(-1)^{j}\binom{m_{2}+m_{1}-j}{m_{2}}%
(b-a)^{j}f^{(j)}(b)}{j!}-\tsum\limits_{j=0}^{m_{2}}\tfrac{\binom{%
m_{2}+m_{1}-j}{m_{1}}(b-a)^{j}f^{(j)}(a)}{j!}\right)  \label{cq}
\end{equation}%
Hence $c=\tfrac{(-1)^{m_{2}+1}}{(b-a)^{m_{1}+m_{2}+1}}\left(
\sum\limits_{j=0}^{m_{1}}\tfrac{\binom{m_{2}+m_{1}-j}{m_{2}}%
(b-a)^{j}f^{(j)}(a)}{j!}-\sum\limits_{j=0}^{m_{2}}\tfrac{(-1)^{j}\binom{%
m_{2}+m_{1}-j}{m_{1}}(b-a)^{j}f^{(j)}(b)}{j!}\right) -$

$\tfrac{(-1)^{m_{1}+1}}{(b-a)^{m_{1}+m_{2}+1}}\left(
\sum\limits_{j=0}^{m_{2}}\tfrac{\binom{m_{2}+m_{1}-j}{m_{1}}%
(b-a)^{j}f^{(j)}(a)}{j!}-\sum\limits_{j=0}^{m_{1}}\tfrac{(-1)^{j}\binom{%
m_{2}+m_{1}-j}{m_{2}}(b-a)^{j}f^{(j)}(b)}{j!}\right) \Rightarrow $

\begin{gather}
c=\tfrac{1}{(b-a)^{m_{1}+m_{2}+1}}\tsum\limits_{j=0}^{m_{1}}\tfrac{\binom{%
m_{2}+m_{1}-j}{m_{2}}%
(b-a)^{j}((-1)^{m_{2}+1}f^{(j)}(a)-(-1)^{m_{1}+j}f^{(j)}(b))}{j!}+  \label{c}
\\
\tfrac{1}{(b-a)^{m_{1}+m_{2}+1}}\tsum\limits_{j=0}^{m_{2}}\tfrac{\binom{%
m_{2}+m_{1}-j}{m_{1}}%
(b-a)^{j}((-1)^{m_{1}}f^{(j)}(a)+(-1)^{m_{2}+j}f^{(j)}(b))}{j!}.  \notag
\end{gather}

Using $m_{1}=m_{2}+2$ gives 
\begin{equation*}
M_{f,m_{1},m_{2}}(a,b)=-\tfrac{d}{c},
\end{equation*}%
where 
\begin{eqnarray}
d &=&\tsum\limits_{k=0}^{m_{2}}\tsum\limits_{l=0}^{m_{2}-k}\tfrac{1}{k!}%
(-1)^{m_{2}+k}\tbinom{m_{2}+l+2}{m_{2}+2}(a-b)^{-m_{2}-3-l}\times  \notag \\
&&\left(
b^{2}a^{-m_{2}-1+k+l}f^{(k)}(a)+(-1)^{m_{2}+l}a^{2}b^{-m_{2}-1+k+l}f^{(k)}(b)\right) +
\notag \\
&&\tsum\limits_{k=0}^{m_{2}+2}\tsum\limits_{l=0}^{m_{2}+2-k}\tfrac{1}{k!}%
(-1)^{m_{2}+k}\tbinom{m_{2}+l}{m_{2}}(b-a)^{-m_{2}-1-l}\times  \label{10} \\
&&\left(
b^{-m_{2}-1+k+l}f^{(k)}(b)+(-1)^{m_{2}+l}a^{-m_{2}-1+k+l}f^{(k)}(a)\right) 
\notag
\end{eqnarray}

and

\begin{gather}
c=\tfrac{(-1)^{m_{2}}}{(b-a)^{2m_{2}+3}}\tsum\limits_{j=0}^{m_{2}+2}\tfrac{%
\binom{2m_{2}+2-j}{m_{2}}(b-a)^{j}((-1)^{j+1}f^{(j)}(b)-f^{(j)}(a))}{j!}+
\label{11} \\
\tfrac{(-1)^{m_{2}}}{(b-a)^{2m_{2}+3}}\tsum\limits_{j=0}^{m_{2}}\tfrac{%
\binom{2m_{2}+2-j}{m_{2}+2}(b-a)^{j}((-1)^{j}f^{(j)}(b)+f^{(j)}(a))}{j!} 
\notag
\end{gather}

We now examine three special cases. For $m_{1}=4$ and $m_{2}=2$, using (\ref%
{10}) and (\ref{11}), we have $24\left( b-a\right)
^{6}d=840(f(b)-f(a))+120(3a-4b)f\,^{\prime }(b)+120(4a-3b)f\,^{\prime }(a)+$

$60\left( b-a\right) (\left( 2b-a\right) f\,^{\prime \prime }(b)-\left(
b-2a\right) f\,^{\prime \prime }(a))-4\left( b-a\right) ^{2}(\left(
b-4a\right) f\,^{\prime \prime \prime }(a)+$

$\left( 4b-a\right) f\,^{\prime \prime \prime }(b))+\left( b-a\right)
^{3}(af^{\prime \prime \prime \prime }(a)+bf^{\prime \prime \prime \prime
}(b)))$ and

$-24\left( b-a\right) ^{6}c=-120(f\,^{\prime }(b)-f\,^{\prime
}(a))+60(b-a)(f\,^{\prime \prime }(b)+f\,^{\prime \prime }(a))-$

$12\left( b-a\right) ^{2}(f\,^{\prime \prime \prime }(b)-f\,^{\prime \prime
\prime }(a))+\left( b-a\right) ^{3}(f^{\prime \prime \prime \prime
}(b)+f^{\prime \prime \prime \prime }(a))$. Thus $M_{f,4,2}(a,b)=$

$\tfrac{840(f(b)-f(a))+120(3a-4b)f\,^{\prime }(b)+120(4a-3b)f\,^{\prime
}(a)+60\left( b-a\right) (\left( 2b-a\right) f\,^{\prime \prime }(b)-\left(
b-2a\right) f\,^{\prime \prime }(a))-4\left( b-a\right) ^{2}(\left(
b-4a\right) f\,^{\prime \prime \prime }(a)+\left( 4b-a\right) f\,^{\prime
\prime \prime }(b))+\left( b-a\right) ^{3}(af^{\prime \prime \prime \prime
}(a)+bf^{\prime \prime \prime \prime }(b))\allowbreak \allowbreak
\allowbreak }{-120(f\,^{\prime }(b)-f\,^{\prime }(a))+60(b-a)(f\,^{\prime
\prime }(b)+f\,^{\prime \prime }(a))-12\left( b-a\right) ^{2}(f\,^{\prime
\prime \prime }(b)-f\,^{\prime \prime \prime }(a))+\left( b-a\right)
^{3}(f^{\prime \prime \prime \prime }(b)+f^{\prime \prime \prime \prime }(a))%
}\allowbreak \allowbreak \allowbreak \allowbreak \allowbreak \allowbreak $

For $m_{1}=3$ and $m_{2}=1$, again using (\ref{10}) and (\ref{11}), we have $%
6\left( b-a\right) ^{-4}d=-60(f(b)-f(a))+12(3b-2a)f\,^{\prime
}(b)-12(3a-2b)f\,^{\prime }(a)+3\left( b-a\right) (\left( a-3b\right)
f\,^{\prime \prime }(b)+$

$\left( b-3a\right) f\,^{\prime \prime }(a))+\left( b-a\right)
^{2}(bf\,^{\prime \prime \prime }(b)-af\,^{\prime \prime \prime }(a))$ and $%
-6\left( b-a\right) ^{4}c=-12(f\,^{\prime }(b)-f\,^{\prime
}(a))+6(b-a)(f\,^{\prime \prime }(b)+f\,^{\prime \prime }(a))-\left(
b-a\right) ^{2}(f\,^{\prime \prime \prime }(b)-f\,^{\prime \prime \prime
}(a))$. Thus

$M_{f,3,1}(a,b)=$

$\tfrac{-60(f(b)-f(a))+12(3b-2a)f\,^{\prime }(b)-12(3a-2b)f\,^{\prime
}(a)+3\left( b-a\right) (\left( a-3b\right) f\,^{\prime \prime }(b)+\left(
b-3a\right) f\,^{\prime \prime }(a))+\left( b-a\right) ^{2}(bf\,^{\prime
\prime \prime }(b)-af\,^{\prime \prime \prime }(a))}{12(f\,^{\prime
}(b)-f\,^{\prime }(a))-6(b-a)(f\,^{\prime \prime }(b)+f\,^{\prime \prime
}(a))+\left( b-a\right) ^{2}(f\,^{\prime \prime \prime }(b)-f\,^{\prime
\prime \prime }(a))}$

For $m_{1}=2$ and $m_{2}=0$ we have $d=\tfrac{(b^{2}-ab)f\,^{\prime \prime
}(b)+(ab-a^{2})f\,^{\prime \prime }(a)+(2a-4b)f\,^{\prime
}(b)+(4a-2b)f\,^{\prime }(a)+6(f(b)-f(a))}{2\left( b-a\right) ^{2}}$ and $c=-%
\tfrac{(b-a)(f\,^{\prime \prime }(b)+f\,^{\prime \prime }(a))-2(f\,^{\prime
}(b)-f\,^{\prime }(a))}{2\left( b-a\right) ^{2}}$. Thus

\begin{equation*}
M_{f,2,0}(a,b)=\tfrac{(b-a)(bf\,^{\prime \prime }(b)+af\,^{\prime \prime
}(a))+2(a-2b)f\,^{\prime }(b)+2(2a-b)f\,^{\prime }(a)+6(f(b)-f(a))}{%
(b-a)(f\,^{\prime \prime }(b)+f\,^{\prime \prime }(a))-2(f\,^{\prime
}(b)-f\,^{\prime }(a))}
\end{equation*}%
If $f(x)=x^{p}$, then some simplification yields $M_{f,2,0}(a,b)=$

$\tfrac{1}{p}\tfrac{%
b^{p-2}(p(p-5)b^{2}+p(3-p)ab+6b^{2})-a^{p-2}(p(p-5)a^{2}+p(3-p)ab+6a^{2})}{%
b^{p-2}((p-1)(b-a)-2b)+a^{p-2}((p-1)(b-a)+2a)},p\notin \{0,1,2,3\}$. The
omitted cases for $p$ can be obtained as limiting values, or one can just
let $f(x)=x^{p}\log x$ for $p\in \{0,1,2,3\}$. That yields $M_{\log
x,2,0}(a,b)=3ab\tfrac{b^{2}-a^{2}-2ab(\ln b-\ln a)}{(b-a)^{3}}=3ab\tfrac{%
b^{2}-a^{2}-2ab\ln \left( \tfrac{b}{a}\right) }{(b-a)^{3}}$, $M_{x\log
x,2,0}(a,b)=2ab\allowbreak \tfrac{(a+b)\ln \left( \tfrac{b}{a}\right) -2(b-a)%
}{b^{2}-a^{2}-2ab\ln \left( \tfrac{b}{a}\right) }$, $M_{x^{2}\log
x,2,0}(a,b)=\tfrac{1}{2}\tfrac{b^{2}-a^{2}-2ab\ln \left( \tfrac{b}{a}\right) 
}{(a+b)\ln \left( \tfrac{b}{a}\right) -2(b-a)}$, and $M_{x^{3}\log
x,2,0}(a,b)=\tfrac{1}{3}\tfrac{(b-a)^{3}}{b^{2}-a^{2}-2ab\ln \left( \tfrac{b%
}{a}\right) }$.

Finally, we consider the case $m_{1}=1$ and $m_{2}=0$, so that $m_{1}+m_{2}$
is odd. As noted earlier, $M_{p,m_{1},m_{2}}(a,b)$ is the unique solution,
in $(a,b)$, of the equation $2\left( x^{p}-a^{p}\right) (b-a)-2\left(
b^{p}-a^{p}\right) (x-a)-p\left( b^{p-1}-a^{p-1}\right) (x-b)(x-a)=0$. For $%
p=4,$ after dividing thru by $2\left( x-a\right) \left( b-a\right) \left(
b-x\right) $, we have $2\left( x-a\right) \left( b-a\right) \left(
b-x\right) \left( b^{2}-xb+ab-xa+a^{2}-x^{2}\right) =0$. This can be solved
exactly to obtain $M_{4,1,0}(a,b)=\tfrac{1}{2}\sqrt{5b^{2}+6ab+5a^{2}}-%
\tfrac{a+b}{2}$. For $p=5,$ after dividing thru by $\left( x-a\right) \left(
b-a\right) \left( b-x\right) $, we have $%
2x^{3}+2bx^{2}+2ax^{2}+2b^{2}x+2xab+2xa^{2}-3a^{3}-3b^{2}a-3ba^{2}-3b^{3}=0$%
. The root in $(a,b)$is given by $M_{5,1,0}(a,b)=\tfrac{1}{6}\sqrt[3]{%
s(a,b)+6\sqrt{t(a,b)}}-\tfrac{2}{3}\tfrac{2b^{2}+ab+2a^{2}}{\sqrt[3]{s(a,b)+6%
\sqrt{t(a,b)}}}-\tfrac{a+b}{3}$, where $s(a,b)=10\left( a+b\right) \left(
19a^{2}+2ab+19b^{2}\right) $ and $%
t(a,b)=1017b^{6}+2238b^{5}a+3495b^{4}a^{2}+4500b^{3}a^{3}+3495b^{2}a^{4}+2238a^{5}b+1017a^{6} 
$

\section{Comparisons with Taylor polynomial means}

As noted earlier, the means defined in this paper are similar to a class of
means defined in \cite{h1}, which were based on intersections of Taylor
polynomials. For $f\in C^{r+1}(I),I=(a,b),$ let $P_{c}$ denote the Taylor
polynomial to $f$ of order $r$ at $x=c$, where $r$ is an odd positive
integer. In \cite{h1} it was proved that if $f^{(r+1)}(x)\neq 0$ on $[a,b]$,
then there is a unique $u,a<u<b,$ such that $P_{a}(u)=P_{b}(u)$. This
defines a mean $m(a,b)\equiv u$, which we denote by $M_{f}^{r}(a,b)$. The
arithmetic, geometric, and harmonic means arise for both classes of means.
We now show that there are means defined in this paper which do not occur as
intersections of Taylor polynomials. In particular, consider the mean $%
M_{\log x,2,0}(a,b)=3ab\tfrac{b^{2}-a^{2}-2ab(\ln b-\ln a)}{(b-a)^{3}}$
discussed earlier. Then $h(b)=M_{\log x,2,0}(1,b)=3b\tfrac{b^{2}-1-2b\ln b}{%
(b-1)^{3}}$, $\lim\limits_{b\rightarrow 1}h^{\prime }(b)=\allowbreak \tfrac{1%
}{2}$, $\lim\limits_{b\rightarrow 1}h^{\prime \prime }(b)=\allowbreak -%
\tfrac{2}{5}$, $\lim\limits_{b\rightarrow 1}h^{\prime \prime \prime
}(b)=\allowbreak \tfrac{3}{5}$, and $\lim\limits_{b\rightarrow 1}h^{\prime
\prime \prime \prime }(b)=\allowbreak -\tfrac{48}{35}$. Since $M_{\log x,2,0}
$ is a homogeneous mean, if $M_{\log x,2,0}=M_{f}^{r}$ for some $f$, then we
may assume that $f(x)=x^{p}$ for some real number $p$ by [\cite{h1}, Theorem
1.4]. Let $k(b)=M_{p}^{r}(1,b)=M_{f}^{r}(1,b)$, where $f(x)=x^{p}$. From [%
\cite{h1}, Theorem 4.1], $k^{\prime \prime }(1)=\tfrac{p-r-1}{2(r+2)}$, $%
k^{\prime \prime \prime }(1)=\tfrac{-3(p-r-1)}{4(r+2)}$, and $k^{\prime
\prime \prime \prime }(1)=\tfrac{p-r-1}{8(r+2)^{3}(r+4)}%
(12r^{3}+8(p+13)r^{2}-4(p^{2}-12p-73)r-16(2p^{2}-p-15)$. Setting $\tfrac{%
p-r-1}{2(r+2)}=-\tfrac{2}{5}$ and $\tfrac{-3(p-r-1)}{4(r+2)}=\tfrac{3}{5}$%
implies that $r=5p+3$. Substituting into $k^{\prime \prime \prime \prime }(1)
$ gives $\allowbreak -\tfrac{12}{125}\tfrac{70p+99}{5p+7}$. Setting $-\tfrac{%
12}{125}\tfrac{70p+99}{5p+7}=\allowbreak -\tfrac{48}{35}$ implies that $p=-%
\tfrac{7}{10}$. Then $r=5\left( -\tfrac{7}{10}\right) +3=\allowbreak -\tfrac{%
1}{2}$, which is not a positive integer. Thus $M_{\log x,2,0}$ \textbf{cannot%
} occur as one of the means $M_{f}^{r}$.

\section{Open Questions and Future Research}

In \cite{h2} it was shown that $\lim\limits_{r\rightarrow \infty
}M_{p}^{r}(a,b)=H(a,b)=\tfrac{2ab}{a+b}$, where $M_{f}^{r}$ are the Taylor
polynomial means defined above. There is strong evidence that a similar
result holds for the means defined in this paper. That is,

\begin{conjecture}
$\lim\limits_{n\rightarrow \infty }M_{p,m_{1},m_{2}}(a,b)$ $=H(a,b)$, where $%
n=m_{1}+m_{2}+1$.
\end{conjecture}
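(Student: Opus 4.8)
The plan is to analyze the defining equation for $x_{0}=M_{p,m_{1},m_{2}}(a,b)$ directly. Writing $f(x)=x^{p}$ and $R_{n}(x)=f[x,a^{m_{1}+1},b^{m_{2}+1}]/f[x,a^{m_{2}+1},b^{m_{1}+1}]$, equation (\ref{meandef}) says that $x_{0}$ is the unique root in $(a,b)$ of $\bigl(\tfrac{x-a}{b-x}\bigr)^{m_{1}-m_{2}}=R_{n}(x)^{-1}$. The case $p=-1$ is the skeleton: the computation in the proof of Theorem \ref{T4} gives $R_{n}(x)\equiv(b/a)^{m_{1}-m_{2}}$ identically, and then $\tfrac{x_{0}-a}{b-x_{0}}=a/b$, i.e. $x_{0}=H(a,b)$, for every $(m_{1},m_{2})$. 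So the goal is to prove that for a general fixed $p$ the function $R_{n}(x)^{-1/(m_{1}-m_{2})}$ still converges to $a/b$ as $n\to\infty$, uniformly for $x$ in compact subsets of $(a,b)$; together with an a priori bound confining $x_{0}$ to such a compact set, this forces $x_{0}\to H(a,b)$.

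The tool for the uniform statement is the Hermite--Genocchi formula. For $f(x)=x^{p}$ one has $f^{(n+1)}(t)=p(p-1)\cdots(p-n)\,t^{\,p-n-1}$, so with $q=p-n-1$, $L=L(\alpha,\beta)=sx+\alpha a+\beta b$, $s=1-\alpha-\beta$,
\[
R_{n}(x)=\frac{\int_{\triangle}\alpha^{m_{1}}\beta^{m_{2}}L^{q}\,d\alpha\,d\beta}{\int_{\triangle}\alpha^{m_{2}}\beta^{m_{1}}L^{q}\,d\alpha\,d\beta},\qquad \triangle=\{\alpha,\beta\ge0,\ \alpha+\beta\le1\},
\]
the normalizing constants being the same in numerator and denominator. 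Dividing both integrals by the corresponding $p=-1$ integrals (exponent $-n-2$) turns this into $R_{n}(x)=(b/a)^{m_{1}-m_{2}}\cdot\langle L^{p+1}\rangle_{\nu_{1}}/\langle L^{p+1}\rangle_{\nu_{2}}$, where $\nu_{1},\nu_{2}$ are the probability measures on $\triangle$ with densities proportional to $\alpha^{m_{1}}\beta^{m_{2}}L^{-n-2}$ and $\alpha^{m_{2}}\beta^{m_{1}}L^{-n-2}$. Since the exponent $-n-2$ tends to $-\infty$, Laplace's method applies to $\nu_{1}$ and $\nu_{2}$: their log-densities attain their maxima on the edge $s=0$ (because $L$ is a convex combination of $x,a,b$ and $x\in(a,b)$ is never extremal, while the weights are independent of $s$), so $\nu_{1},\nu_{2}$ concentrate at points of $\triangle$ at which $L$ takes some values $L_{1}^{\ast},L_{2}^{\ast}\in[a,b]$, and consequently $\langle L^{p+1}\rangle_{\nu_{j}}\to (L_{j}^{\ast})^{p+1}$, a finite positive number. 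Hence $R_{n}(x)=(b/a)^{m_{1}-m_{2}}K_{n}(x)$ with $K_{n}(x)\to(L_{1}^{\ast}/L_{2}^{\ast})^{p+1}$ bounded and bounded away from $0$.

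The point that makes the conjecture come out is a dichotomy. If $m_{2}/m_{1}\not\to1$ (along the subsequence under consideration), then $m_{1}-m_{2}\to\infty$, so $R_{n}(x)^{-1/(m_{1}-m_{2})}=(a/b)\,K_{n}(x)^{-1/(m_{1}-m_{2})}\to a/b$ no matter what $K_{\infty}$ is. If $m_{2}/m_{1}\to1$ (the only way $m_{1}-m_{2}$ can stay bounded), then the leading-order saddle equation on the edge $s=0$, namely $\tfrac1{1-\alpha}-\tfrac1{\alpha}=\tfrac{2(b-a)}{\alpha a+(1-\alpha)b}$ — which does not distinguish $m_{1}$ from $m_{2}$ — is common to $\nu_{1}$ and $\nu_{2}$, so $L_{1}^{\ast}=L_{2}^{\ast}$ ($=\tfrac{2ab}{a+b}=H(a,b)$, incidentally), hence $K_{n}(x)\to1$ and again $R_{n}(x)^{-1/(m_{1}-m_{2})}\to a/b$. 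Either way $\tfrac{x_{0}-a}{b-x_{0}}\to a/b$, i.e. $x_{0}\to H(a,b)$. Here I also need the a priori confinement: $R_{n}(x)>1$ for all $x$ (once $n>p$) — which follows from Lemma \ref{L3} with $g(t)=t^{n+1}$, giving $R_{n}(x)=(\zeta_{Q}/\zeta_{P})^{\,n+1-p}$ with $\zeta_{P}<\zeta_{Q}$ in $(a,b)$ — already forces $x_{0}<A(a,b)$, so $x_{0}$ stays away from $b$; a lower bound keeping $x_{0}$ away from $a$ is immediate from Theorems \ref{T3}--\ref{T5} and \ref{T10} when $p>-1$, and for $p\le-1$ can be obtained by re-running the Laplace analysis at $x=a$, where the $\alpha$-integral factors out and $R_{n}(a)^{-1/(m_{1}-m_{2})}$ is seen to stay bounded below.

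The main obstacle is making the Laplace estimates of the third paragraph \emph{uniform} over all admissible $(m_{1},m_{2})$. The geometry of the concentration genuinely varies with the ratio $m_{2}/m_{1}$: when it is near $0$ the relevant saddle is pulled toward the vertex $\alpha=1$ and $R_{n}(x)\sim(b/a)^{m_{1}-m_{2}}(a/b)^{p+1}$, when it is near $1$ it sits at the interior edge point above, and there is a continuum of intermediate behaviors; one must control the remainder in Laplace's method uniformly across all of them, and the case $x\to a$ needs separate treatment because there the node $x$ collides with $a$ and the saddle-point picture degenerates. I would keep the comparison with the $p=-1$ integrals throughout — so that the only quantity ever being estimated is $\langle L^{p+1}\rangle_{\nu_{j}}$, a bounded function integrated against a measure that is provably concentrating — rather than trying to estimate the $\Gamma$-laden Laplace prefactors of the individual divided differences, which is where the regime dependence is most painful.
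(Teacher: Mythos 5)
First, a point of record: the paper does not prove this statement — it is posed as a conjecture ("there is strong evidence that..."), so there is no proof of the author's to compare yours against, and your attempt must be judged on its own. Judged that way, your reduction is correct: by (\ref{meandef}), $\bigl(\tfrac{x_0-a}{b-x_0}\bigr)^{m_1-m_2}=R_n(x_0)^{-1}$; the confluent Hermite--Genocchi representation $f[x,a^{m_1+1},b^{m_2+1}]=\tfrac{1}{m_1!\,m_2!}\int_\Delta\alpha^{m_1}\beta^{m_2}f^{(n+1)}(L)\,d\alpha\,d\beta$ has constants symmetric in $m_1,m_2$, so they cancel in $R_n$, and dividing by the exactly computable $p=-1$ integrals gives the key identity $\tfrac{x_0-a}{b-x_0}=\tfrac{a}{b}\,K_n(x_0)^{-1/(m_1-m_2)}$ with $K_n=\langle L^{p+1}\rangle_{\nu_1}/\langle L^{p+1}\rangle_{\nu_2}$. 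Two observations make your program easier than you fear. Since $a\le L\le b$ on the simplex for every $x\in[a,b]$, you get for free $(a/b)^{|p+1|}\le K_n(x)\le(b/a)^{|p+1|}$, uniformly in $x,m_1,m_2$; this disposes of every subsequence along which $m_1-m_2\to\infty$ with no Laplace analysis at all, so the only case needing asymptotics is $m_1-m_2$ bounded (pass to a subsequence with $m_1-m_2=\delta$ fixed, $m_2\to\infty$) — a cleaner dichotomy than the one by $m_2/m_1$, which does not by itself control the difference. Moreover no a priori confinement of $x_0$ is needed: the identity holds wherever $x_0$ lies in $(a,b)$, and if $K_n^{-1/(m_1-m_2)}\to1$ uniformly on the closed interval $[a,b]$, then $\tfrac{x_0-a}{b-x_0}\to\tfrac ab$ forces $x_0\to\tfrac{2ab}{a+b}$ because $\rho\mapsto\tfrac{a+b\rho}{1+\rho}$ is continuous. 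So your entire third paragraph (the bound $R_n>1$ via Lemma \ref{L3}, the appeal to Theorems \ref{T3}--\ref{T5} and \ref{T10}, the separate Laplace analysis at $x=a$) can be discarded, and the degeneracy you worry about at $x\to a$ never enters: the maximizer of the exponent sits on the edge $s=0$, where $L=\alpha a+\beta b$ does not involve $x$, and the inward decay of the exponent is bounded below uniformly in $x\in[a,b]$.

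The genuine gap is the one you yourself flag, though it is smaller than you describe. In the bounded-difference regime you do not need Laplace asymptotics with prefactors — only that $\nu_1$ and $\nu_2$ both converge weakly, uniformly in $x\in[a,b]$, to the point mass at the common maximizer $\bigl(\tfrac{b}{a+b},\tfrac{a}{a+b}\bigr)$ of $\log\alpha+\log\beta-2\log L$ (so $L^{\ast}=H(a,b)$, as you note); since $L^{p+1}$ is bounded and continuous this yields $K_n\to1$. That concentration statement is routine (the exponent is $m_2\phi$ plus perturbations $\delta\log\alpha$ or $\delta\log\beta$ and $O(1)\log L$, all bounded above on $\Delta$ and bounded near the maximizer, and $\sup_{U^c}\phi<\phi^{\ast}$ uniformly in $x$), but in your write-up it is asserted, not proved, and you explicitly defer the uniformity. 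Until that step is written out — together with the one-line remark that nonnegative integer $p$ is handled by $f(x)=x^p\log x$, whose $(n+1)$st derivative is again a constant times $t^{p-n-1}$ with the constant cancelling in $R_n$ — what you have is a correct and, I believe, completable plan for settling the conjecture, not yet a proof of it.
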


More generally, analyze the asymptotic behavior of $M_{f,m_{1},m_{2}}$ as $%
n\rightarrow \infty $. As in \cite{h2}, it should follow that the arithmetic
mean arises as $\lim\limits_{n\rightarrow \infty }M_{f,m_{1},m_{2}}$. It is
then natural to ask:

\textbf{Question:} Are the arithmetic and harmonic means the only means
which arise as $\lim\limits_{n\rightarrow \infty }M_{f,m_{1},m_{2}}$ ?

We showed in Theorem \ref{T4} that $M_{-1,m_{1},m_{2}}(a,b)=H(a,b)=\tfrac{2ab%
}{a+b}$ for any $m_{1}$ and $m_{2}$. Thus for $f(x)=\tfrac{1}{x}$, $%
M_{f,m_{1},m_{2}}$ is independent of $m_{1}$ and $m_{2}$.

\begin{conjecture}
Show that the only function, $f$, for which $M_{f,m_{1},m_{2}}$ is
independent of $m_{1}$ and $m_{2}$ is $f(x)=\tfrac{C}{x}$.
\end{conjecture}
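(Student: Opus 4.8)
The plan is to recover information about $f$ from the behavior of $M_{f,m_1,m_2}(a,b)$ as $b\to a$, i.e.\ from the Taylor expansion of the mean about the diagonal. Throughout, $f$ is assumed smooth enough on $(0,\infty)$ that all the means $M_{f,m_1,m_2}$ are defined, so that in particular $f$ is not a polynomial and $f^{(n+1)}$ never vanishes for any $n$. Since $M_{f-R,m_1,m_2}=M_{f,m_1,m_2}$ for every $R\in\pi_{n}$ with $n=m_1+m_2+1$, and $M_{cf,m_1,m_2}=M_{f,m_1,m_2}$ for every nonzero constant $c$ (the defining equation $E_{P}=(-1)^{m_1-m_2}E_{Q}$ is homogeneous), the mean depends on $f$ only through the ratios of the Taylor coefficients of order $\ge n+1$ of $f$ at $a$.

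The first step is the local expansion: for $a>0$ and $t\to0^{+}$,
\begin{equation*}
M_{f,m_1,m_2}(a,a+t)=a+\tfrac{t}{2}+\frac{1}{4(n+2)}\cdot\frac{f^{(n+2)}(a)}{f^{(n+1)}(a)}\,t^{2}+O(t^{3}),\qquad n=m_1+m_2+1 .
\end{equation*}
To obtain it, put $x=a+st$, $b=a+t$ with $0<s<1$ in (\ref{meandef}) and expand the two divided differences: modulo $\pi_{n}$ one may replace $f$ by its Taylor tail at $a$, and using $[z_{0},\dots,z_{n+1}]\,z^{n+1+j}=h_{j}(z_{0},\dots,z_{n+1})$ ($h_{j}$ the complete homogeneous symmetric polynomial of degree $j$) one gets $f[x,a^{m_1+1},b^{m_2+1}]=\tfrac{f^{(n+1)}(a)}{(n+1)!}\bigl(1+\tfrac{1}{n+2}\tfrac{f^{(n+2)}(a)}{f^{(n+1)}(a)}\bigl[(x-a)+(m_2+1)t\bigr]+O(t^{2})\bigr)$, and the same expression with $m_1,m_2$ interchanged. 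The $O(1)$ part of (\ref{meandef}) reads $s^{m_1-m_2}=(1-s)^{m_1-m_2}$, forcing $s=\tfrac12$; matching the $O(t)$ part then gives $s=\tfrac12+\tfrac{1}{4(n+2)}\tfrac{f^{(n+2)}(a)}{f^{(n+1)}(a)}t+O(t^{2})$, which is the claim. As a check, for $f(x)=1/x$ the $t^{2}$--coefficient is $-\tfrac{1}{4a}$, agreeing with the expansion of $H(a,a+t)$ as it must by Theorem \ref{T4}.

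The second step uses the two configurations $(m_1,m_2)=(1,0)$ (so $n=2$) and $(2,0)$ (so $n=3$). If $M_{f,m_1,m_2}$ is independent of $m_1,m_2$, the $t^{2}$--coefficients of $M_{f,1,0}(a,a+t)$ and $M_{f,2,0}(a,a+t)$ coincide, so for every $a>0$
\begin{equation*}
\tfrac{1}{16}\,\frac{f^{(4)}(a)}{f^{(3)}(a)}=\tfrac{1}{20}\,\frac{f^{(5)}(a)}{f^{(4)}(a)},\qquad\text{equivalently}\qquad 5\bigl(f^{(4)}\bigr)^{2}=4\,f^{(3)}f^{(5)}\ \text{on}\ (0,\infty).
\end{equation*}
Writing $u=f^{(3)}$ (nonvanishing, since $M_{f,1,0}$ is defined) this is $5(u')^{2}=4uu''$. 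One checks that $u'$ cannot vanish: at a zero of $u'$ one also has $u''=0$, and uniqueness for $u''=\tfrac54(u')^{2}/u$ then forces $u$ locally constant, hence constant on all of $(0,\infty)$, hence $f^{(4)}\equiv0$ --- impossible since $f^{(n+1)}$ never vanishes. Separating variables gives $u'=c\,u^{5/4}$ and $u(x)=A(x-d)^{-4}$ for constants $A\neq0$ and $d$; integrating three times, $f(x)=\dfrac{C}{x-d}+q(x)$ with $C\neq0$, $q\in\pi_{2}$, and $d\notin(0,\infty)$ because $f$ is smooth there. Conversely, every such $f$ does give a mean independent of $m_1,m_2$: subtract $q$, rescale to reduce to $f(x)=1/(x-d)$, use that divided differences are translation covariant, and apply Theorem \ref{T4} to obtain $M_{f,m_1,m_2}(a,b)=H(a-d,b-d)+d$ for every $m_1,m_2$.

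I expect two difficulties. The honest technical point is the expansion of the confluent divided differences that produces the $t^{2}$--coefficient in closed form --- in particular, checking that only $f^{(n+1)}(a)$ and $f^{(n+2)}(a)$ enter at that order and that the constant $\tfrac{1}{4(n+2)}$ depends on $m_1,m_2$ only through $n$. The more serious point is structural: the argument above in fact classifies \emph{all} admissible $f$ as $f(x)=\dfrac{C}{x-d}+q(x)$ with $C\neq0$, $d\le0$, $q\in\pi_{2}$, and the case $d<0$ genuinely does give a mean independent of $m_1,m_2$ (namely the translated harmonic mean $H(a-d,b-d)+d$). Thus the conjecture is correct only after being restated with this family in place of $f(x)=C/x$, or after an extra normalization is imposed --- for instance, requiring that $M_{f,m_1,m_2}$ be homogeneous, which (arguing as in Theorem \ref{T9}) forces $d=0$ and recovers $f(x)=C/x$ modulo $\pi_{2}$.
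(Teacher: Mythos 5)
First, a point of record: the paper offers no proof of this statement --- it is posed in the final section as an open conjecture --- so there is no argument of the author's to compare yours against; I can only assess your proposal on its own terms.

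On the mathematics, your computations check out. The Hermite--Genocchi representation (or your monomial/complete-homogeneous-symmetric-function identity) does give $f[z_0,\dots,z_{n+1}]=\tfrac{f^{(n+1)}(a)}{(n+1)!}\bigl(1+\tfrac{1}{n+2}\tfrac{f^{(n+2)}(a)}{f^{(n+1)}(a)}\sum_i(z_i-a)+O(t^2)\bigr)$ for nodes within $O(t)$ of $a$, and substituting $x=a+st$, $b=a+t$ into (\ref{meandef}) yields $s=\tfrac12+\tfrac{1}{4(n+2)}\tfrac{f^{(n+2)}(a)}{f^{(n+1)}(a)}t+O(t^2)$; I verified the coefficient independently, and it is consistent with the expansion of $H(a,a+t)$ for $f(x)=1/x$, as required by Theorem \ref{T4}. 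Equating the $t^2$-coefficients for $(m_1,m_2)=(1,0)$ and $(2,0)$ gives $5(f^{(4)})^2=4f^{(3)}f^{(5)}$, whose solutions with $f^{(3)},f^{(4)}$ nonvanishing are exactly $f^{(3)}(x)=A(x-d)^{-4}$, i.e.\ $f(x)=C/(x-d)+q(x)$, $q\in\pi_2$ (your ODE-uniqueness digression is unnecessary: $f^{(4)}=u'$ must already be nonvanishing for $M_{f,2,0}$, where $n+1=4$, to be defined). Your converse is also right: for $f(x)=1/(x-d)$, equation (\ref{meandef}) collapses to $(x-a)(b-d)=(b-x)(a-d)$, whose root is $d+H(a-d,b-d)$ for every $m_1,m_2$, and this differs from $H(a,b)$ when $d<0$. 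So your argument, if completed, does not prove the conjecture as literally stated but corrects it: the class of admissible $f$ is $C/(x-d)+q(x)$ with $C\neq0$, $d\le0$, $q\in\pi_2$, and $f(x)=C/x$ is singled out only after an additional normalization such as homogeneity (via Theorem \ref{T9}). The one genuine gap is the step you yourself flag: the second-order expansion of the implicitly defined mean is asserted, not proved. It can be made rigorous by dividing (\ref{meandef}) by $t^{m_1-m_2}$, noting that the resulting function of $(s,t)$ is continuously differentiable near $(\tfrac12,0)$ (this uses the integral representation of the divided differences and one extra derivative of $f$) with nonvanishing $s$-derivative there, and invoking the implicit function theorem; until that is written out, the proof is an outline at its crucial step, though a routine one.
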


\end{document}